\documentclass[a4paper,reqno,11pt]{amsart}
\usepackage{amsfonts,amsmath,amsthm,amssymb,stmaryrd}
\usepackage{hyperref}
\usepackage{color}
\usepackage{mathrsfs}
\setlength{\textwidth}{16cm}
\setlength{\textheight}{25cm} \hoffset -10mm \voffset -10mm
\newtheorem{theorem}{Theorem}[section]
\newtheorem{lemma}{Lemma}[section]
\newtheorem{proposition}[lemma]{Proposition}
\newtheorem{remark}{Remark}[section]
\newtheorem{lemma(Gagliardo-Nirenberg)}{Lemma{(Gagliardo-Nirenberg)}}[section]
\bibliographystyle{plain}
\numberwithin{equation}{section}
\allowdisplaybreaks
\begin{document}
\title[Global Classical Solutions to the viscous two--phase flow model]{Global Classical Solutions to the viscous two--phase flow model with slip Boundary Conditions in 3D Exterior Domains}
\author{Zilai Li}
\address{School of Mathematics and Information Science, Henan Polytechnic University, Jiaozuo, Henan 454003, China}
\email[Z.L. Li]{lizl@hpu.edu.cn}
\author{Hao Liu}
\address{School of Mathematics and Information Science, Henan Polytechnic University, Jiaozuo, Henan 454003, China}
\email[H. Liu]{1131987412@qq.com}
\author{Huaqiao Wang}
\address{College of Mathematics and Statistics, Chongqing University,
                             Chongqing, 401331, China.}
\email[H.Q. Wang]{wanghuaqiao@cqu.edu.cn}

\date{\today}

\keywords{two--phase flow model; global existence; slip boundary condition; exterior domains; vacuum; large--time behavior.}
\subjclass[2010]{35Q35; 35Q30; 35A09; 35B40.}
\begin{abstract}
We consider the  two--phase flow model with slip boundary condition in a 3D exterior domains whose boundary is smooth. We establish the global existence of classical solutions of this system provided that the initial energy is suitably small. Moreover, the pressure has large oscillations and contains vacuum states when the initial pressure allows large oscillations and a vacuum. Finally, we also obtain the large--time behavior of the classical solutions.
\end{abstract}

\maketitle

\section{Introduction}
The two--phase flow model  originally developed by Zuber and Findlay \cite{Z1964}, Wallis \cite{W1979}, and Ishii \cite{I1975,ITT2003} can be written as
\begin{equation}\label{1.1}
\left\{
	\begin{array}{lr}
	\rho_t+{\rm div}(\rho u)=0,&\\
	m_t+{\rm div}(mu)=0,&\\
	\left((\rho+m) u\right)_t+{\rm div}{[(\rho+m)u\otimes u]}+\nabla P(\rho,m)=\mu\Delta u+(\lambda+\mu)\nabla {\rm div}u,
	\end{array}
\right.
\end{equation}
which is commonly used in industrial applications, such as nuclear, power, chemical--process, oil and gas, cryogenics, bio--medical, micro--technology and so on. Here $(x,t)\in \Omega \times(0, T]$, $\Omega$ is a domain in\ $\mathbb{R}^3$. 
$\rho\ge0$, $m\ge0$, $u=(u_1,u_2,u_3)$\ and\ $ P(\rho,m)=\rho^\gamma+m^\alpha~(\gamma>1,\alpha\ge1)$ are the unknown two--phase flow model's fluid density, velocity and pressure, respectively.  The constants $\mu$ and $\lambda$ are the shear viscosity and bulk coefficients respectively satisfying the following physical restrictions:
 \begin{equation}\label{1.2}
	\mu>0,\ 2\mu+3\lambda\ge0.
 \end{equation}

In this paper, we consider the domain $\Omega$ is the exterior of a simply connected bounded domain D in $\mathbb{R}^3$, and its boundary $\partial\Omega$ is smooth. In addition, the system is studied subject to the given initial data
\begin{equation}\label{1.3}
	\rho(x,0)=\rho_0(x),\ m(x,0)=m_0(x),\ (\rho+m)u(x,0)=(\rho_0+m_0)u_0(x),\ x\in\Omega,
\end{equation}
and slip boundary condition
\begin{equation}\label{1.4}
	u\cdot n=0,\ {\rm curl}u\times n=0\ {\rm on}\ \partial\Omega,
\end{equation}
with the far field behavior
\begin{equation}\label{1.5}
	u(x,t)\to0,\ \rho(x,t)\to\rho_{\infty}\ge0,\ m(x,t)\to m_{\infty}\ge0,\text{ as }\ |x|\to\infty,
\end{equation}
where $n=(n^1,n^2,n^3)$ is the unit outward normal vector to the boundary $\partial\Omega$ pointing outside $\Omega$, $\rho_\infty$ and $m_\infty$ are the non--negative constants.

The first condition in \eqref{1.4} is the non--penetration boundary condition, while the second one is also known in the form
\begin{equation*}
	\left(D(u)n\right)_{\tau}=-\kappa_\tau u_\tau,
\end{equation*}
where $D(u)=(\nabla u+(\nabla u)^{tr})/2$ is the deformation tensor, $\kappa_\tau$ is the corresponding normal curvature of $\partial\Omega$ in the $\tau$ direction and the
symbol $u_\tau$ represents the projection of tangent plane of the vector $u$ on $\partial\Omega$
.  This type of boundary condition was originally introduced by Navier \cite{N1827} in 1823, which was followed by many applications, numerical studies and analysis for various fluid mechanical problem, see, for instance \cite{CF1988,ITT2003,S1959} and the references therein.

 Many models are related to the two--phase model \eqref{1.1}, specially,  the case of $\alpha=1$ corresponds to the hydrodynamic equations which was derived as the asymptotic limit of  Vlasov--Fokker--Planck equations coupled with compressible Navier--Stokes equations, see  \cite{CG2006,MV2008}; The case of $\alpha=2$ is associated with a compressible Oldroyd--B type model with stress diffusion, see \cite{BLS2017}. Furthermore, if we let $m\equiv0$, then the viscous liquid--gas two--flow model \eqref{1.1} reduces to the classical isentropic compressible  Navier--Stokes equations.  Comparing with the isentropic compressible Navier--Stokes equations, the main difference is that the pressure law $P(\rho,m)=\rho^{\gamma}+m^{\alpha}$ depends on two different variables from the continuity equations.

Before stating our main result, we briefly recall some previous known results on the viscous two--fluid model. For the one--dimensional case, Evje and Karlsen \cite{EK2008} obtained the first global existence result on
weak solutions with large initial data subject to the domination conditions. Later, the domination condition was removed  by Evje--Wen--Zhu \cite{EWZ2017} using the decomposition of the pressure term, which allows transition to each single-phase flow. Recently, Gao--Guo--Li \cite{GGL2022} considered  the Cauchy problem of 1D viscous two--fluid model and established the global existence of strong solutions with the large initial value and vacuum. For more related results, please refer to \cite{E2011,EK2008,YZ2009,YZ2011} and the references therein. For the multi--dimensional case, Yao--Zhang--Zhu \cite{YZZ2010}  proved the global existence of weak solutions to the 2D Cauchy problem case when the initial energy is small and both of the initial densities are positive.  Hao and Li \cite{HL2011} obtained the existence and uniqueness of the global strong solutions to the Cauchy problem in $\mathbb{R}^d$ with $d\geq 2$ in the framework of Besov spaces, where the possible vacuum state is included in the equilibrium state for the gas component at far field. Zhang and Zhu \cite{ZZ2015} considered the 3D Cauchy problem and proved the global existence of a strong solution when $H^2$--norm of the initial perturbation around a constant state is sufficiently small. When both phases contain vacuum initially, Guo--Yang--Yao \cite{GYY2011} proved the global existence of  strong solutions to the 3D Cauchy problem under the assumption that initial energy is sufficiently small. Very recently, the domination condition was removed by Yu \cite{Y2021} for the global existence of the strong solution to the 3D case when the initial energy is small. For large initial data cases,
Vasseur--Wen--Yu \cite{VWY2019} obtained the global existence of weak solutions to Dirichlet boundary value problem of  \eqref{1.1}  in $\mathbb{R}^3$ with the pressure  $ P(\rho,m)=\rho^r+m^\alpha~(r>1,\alpha\ge1)$ and the domination conditions. Novotn\'{y} and Pokorn\'{y} \cite{NP2020} extended the domination condition to the case that both $\gamma$ and $\alpha$ can touch $\frac{9}{5}$, where more general pressure laws covering the cases of
$P(\rho,m)=\rho^r+m^\alpha~(r>1,\alpha\ge1)$ were considered.  Wen \cite{W2019} obtained the global existence of weak solutions to 3D Dirichlet problem of compressible two--fluid model without any domination conditions. However, there are few results about classical solutions to compressible two--fluid model for general bounded domains, which  is one of our main motivations of present paper.

When we take $m=0$ in \eqref{1.1}, then two--phase flow model \eqref{1.1} changes into the compressible Navier--Stokes equations. In the last several decades, significant progress on the compressible Navier--Stokes equations has been achieved by many authors in the analysis of the well--posedness and large time behavior. We only briefly review some results related to the existence of strong or classical solutions.
The global classical solutions were first obtained by Matsumura--Nishida \cite{MN1980} for initial
data close to a nonvacuum equilibrium in $H^{3} (\mathbb R^3)$. It is worth mentioned that their results
have been improved by Huang--Li--Xin \cite{HLX2012} and Li--Xin \cite{LX2018}, in which the global existence
of classical solutions is obtained with smooth initial data that are of small energy but
possibly large oscillations. Very recently, for the 3D bounded domain (or 3D Exterior Domains) with slip boundary conditions, Cai--Li \cite{CL2021}(or Cai--Li--Lv \cite{CLL2021}) proved that the existence and large--time behavior of global classical solutions to the compressible Navier--Stokes equations. And Cai--Huang--Shi \cite{CHS2021} proved  the global existence and exponential growth of classical solutions subject to large potential forces with slip boundary condition in 3D bounded domains. For 3D bounded Domains with Non--Slip Boundary Conditions, Fan--Li \cite{FL2022} proved global classical solutions to the compressible Navier--Stokes system with vacuum.

Before stating the main results, we explain some notations and conventions used in this paper. We denote
\begin{equation*}\int fdx \triangleq\int_\Omega fdx.\end{equation*}
For integer $k$ and $1\le q <+\infty$, the standard homogeneous Sobolev spaces are denoted as follows:
$$D^{k,q}_0(\Omega) \triangleq
\left\{
{u\in L^1_{loc}(\Omega)\big|\|\nabla^ku\|_{L^q(\Omega)}<+\infty}
\right\},\ \|\nabla u\|_{D^{k,q}(\Omega)}\triangleq\|\nabla^ku\|_{L^q(\Omega)}.$$
We also denote
\begin{equation*}
	D^k(\Omega)=D^{k,2}(\Omega),\ H^k(\Omega)=W^{k,2}(\Omega),\ W^{k,q}(\Omega)=L^q(\Omega)\cap D^{k,p}(\Omega)
\end{equation*}
with the norm $\|u\|_{W^{k,q}(\Omega)}\triangleq\left(\sum_{|m|\le k}\|\nabla^mu\|_{L^q(\Omega)}^q\right)^{\frac{1}{q}}$.

Simply, $L^q(\Omega),\ D^{k,q}(\Omega),\ D^k(\Omega),\ W^{k,q}(\Omega)$ and $H^k(\Omega)$ can be denoted by $L^q,\ D^{k,q},\ D^k,\ W^{k,q}$ and $H^k$ respectively, and set that
\begin{equation*}
	B_{R}\triangleq\left\{x\in \mathbb{R}^3\big||x|<R\right\}.
\end{equation*}

For two $3\times3$ matrices $A=\left\{a_{ij}\right\}$, $B=\left\{b_{ij}\right\}$, the symbol $A:B$ represents the trace of $AB$, $$A:B\triangleq \sum_{i,j=1}^{3}a_{ij}b_{ji}.$$
\par Set the initial total energy of \eqref{1.1}:
\begin{equation}\label{1.6}
C_0\triangleq \int_\Omega\left(\frac{1}{2}(\rho_0+m_0)|u_0|^2+G(\rho_0,m_0)\right)dx
\end{equation}
with
\begin{equation*}
	G(\rho,m)=\rho\int^\rho _{\rho_\infty}\frac{P(s,m)-P(\rho_\infty,m)}{s^2}ds+m\int^m _{m_\infty}\frac{P({\rho},s)-P({\rho},m_\infty)}{s^2}ds.
\end{equation*}
\par Finally, for $v=(v^1,v^2,v^3)$, we set $\nabla_jv=(\partial_jv^1,\partial_jv^2,\partial_jv^3)$, for $j=1,2,3$, $P_0=P(\rho_0,m_0)$, and $P_\infty=P(\rho_\infty,m_\infty)$.

Our first result is stated below:

\begin{theorem}\label{Thm1.1}
Let $\Omega$ be the exterior of a simply connected bounded domain in $\mathbb{R}^3$ with smooth boundary $\partial\Omega$. For 
$M\ge1$, $\bar{\rho}\ge\rho_\infty+1$, $\bar{m}\ge m_{\infty}+1$ and some $q\in(3,6)$, assume that the initial date $(\rho_0,m_0,u_0)$ satisfy the following condition:
\begin{equation}\label{1.7}
	u_0\in\left\{\bar{f}\in D^1\cap D^2\ :\ \bar{f}\cdot n=0,\ {\rm curl}\bar{f}\times n=0\  on\  \partial\Omega\right\},
\end{equation}
\begin{equation}\label{1.8}
	(\rho_0-\rho_\infty,m_0-m_\infty,P(\rho_0,m_0)-P(\rho_\infty,m_\infty))\in H^2\cap W^{2,q},
\end{equation}
\begin{equation}\label{1.9}
	0\le\rho_0\le\bar{\rho},\ \ \ 0\le m_0\le\bar{m},\ \ \ \mu\|{\rm curl}u_0\|_{L^2}^2+(\lambda+2\mu)\|{\rm div}u_0\|_{L^2}^2\triangleq M,
\end{equation}
and the compatibility condition
\begin{equation}\label{1.10}
	-\mu\Delta u_0-(\lambda+\mu)\nabla {\rm div}u_0+\nabla P_0={(\rho_0+m_0)}^{1/2}g,
\end{equation}
for some $g\in L^2$. Then there exists a positive constant $\varepsilon$ depending only on $\lambda$, $\mu$, $\gamma$, $\alpha$, $\Omega$, M, $\bar{\rho}$ and $\bar{m}$ such that if
\begin{equation}\label{1.11}
C_0\le\varepsilon,
\end{equation}
then the slip problem \eqref{1.1}--\eqref{1.5} has a unique global classical solution $(\rho,m,u)$ in $\Omega\times(0,\infty)$ satisfying
\begin{equation}\label{1.12}
	0\le\rho(x,t)\le2\bar{\rho},\ 0\le m(x,t)\le2\bar{m}, 
\end{equation}
\makeatletter
\@addtoreset{equation}{section}
\makeatother
\renewcommand{\theequation}
{\arabic{section}.\arabic{equation}}
\begin{equation}\label{1.13}
	\left\{
	\begin{array}{lr}
	(\rho-\rho_\infty,m-m_\infty,P-P_\infty)\in C([0,\infty);H^2\cap W^{2,q}), & \\
	\nabla u\in C([0,\infty);H^1)\cap L^\infty_{loc}(0,\infty;W^{2,q}), & \\
	u_t\in L^\infty_{loc}(0,\infty;D^1\cap D^2)\cap H^1_{loc}(0,\infty;D^1), &\\
	(\rho+m)^{\frac{1}{2}}u_t\in L^\infty(0,\infty;L^2). &
	\end{array}
	\right.
\end{equation}
In addition, the following large-time behavior
\begin{equation}\label{1.14}
	\mathop{\rm lim}_{t\to\infty}\int\left(|P-P_\infty|^q+(\rho+m)^{\frac{1}{2}}|u|^4+|\nabla u|^2\right)(x,t)dx=0
\end{equation}
holds for any $2< q<\infty$.
\end{theorem}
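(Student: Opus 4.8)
The plan is to follow the now-standard continuity-argument strategy pioneered by Huang--Li--Xin and adapted to slip boundary conditions by Cai--Li and Cai--Li--Lv, but carefully tracking the extra pressure variable $m$ and the coupled structure of \eqref{1.1}. Since the local existence of a classical solution on some $\Omega\times(0,T_0]$ can be obtained by standard arguments using the compatibility condition \eqref{1.10}, the heart of the proof is the derivation of \emph{a priori} estimates that are independent of time, which then allow one to extend the local solution to a global one. I would first fix the key \emph{a priori} assumptions to be closed, namely a pointwise bound $0\le\rho\le 2\bar\rho$, $0\le m\le 2\bar m$, a bound on $\sup_t(\mu\|\mathrm{curl}\,u\|_{L^2}^2+(\lambda+2\mu)\|\mathrm{div}\,u\|_{L^2}^2)$ by $2M$, and an integrated bound on $\int_0^\infty\int(\rho+m)^{1/2}|u|^4$-type quantities (plus a time-weighted bound involving $\sqrt t\,\nabla u$ and higher norms). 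The continuity argument then consists in showing that under \eqref{1.11} with $\varepsilon$ small, all these quantities are in fact controlled by \emph{half} of the assumed bounds, so the set of times on which they hold is both open and closed.

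The order of the estimates I would carry out: (1) the basic energy estimate, using the effective viscous flux and the convex entropy functional $G(\rho,m)$ defined in the excerpt — here the two-variable structure of $P(\rho,m)=\rho^\gamma+m^\alpha$ is handled by splitting $G$ into a $\rho$-part and an $m$-part and exploiting that each continuity equation $\rho_t+\mathrm{div}(\rho u)=0$, $m_t+\mathrm{div}(mu)=0$ propagates the respective mass; (2) estimates for $\int(\rho+m)|u|^3\,dx$ and $\|\nabla u\|_{L^2}$ with time weights, via multiplying the momentum equation by $|u|u$ and by $u_t$ and using the slip boundary conditions to control boundary terms (this is where the identities $\int \Delta u\cdot\nabla G\,dx$-type manipulations and the bounds on $\|\nabla u\|_{L^2}\lesssim\|\mathrm{div}\,u\|_{L^2}+\|\mathrm{curl}\,u\|_{L^2}$ valid under \eqref{1.4} come in); (3) the crucial density/pressure bound: one studies the equation satisfied by $P-P_\infty$ along particle trajectories, writes it in the form of an ODE with the effective viscous flux, and uses a Zlotnik-type argument to close $0\le\rho\le2\bar\rho$, $0\le m\le2\bar m$ — care is needed because $P$ being bounded does not immediately bound $\rho$ and $m$ separately; one needs the monotone dependence of $P$ on each of $\rho,m$ together with a pointwise argument showing neither density can reach its threshold; (4) higher-order estimates: $\|\nabla u\|_{H^1}$, $\|(\rho+m)^{1/2}u_t\|_{L^2}$, then $W^{2,q}$-bounds on $\rho-\rho_\infty$, $m-m_\infty$, $P-P_\infty$ and $\nabla u$, using elliptic regularity for the Lam\'e system with the slip boundary condition and transport estimates for the (two) density equations; and finally (5) the time-decay statement \eqref{1.14}, extracted from the integrability-in-time bounds $\int_0^\infty(\cdots)dt<\infty$ together with uniform continuity of the integrand.

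The main technical obstacles I anticipate are threefold. First, the exterior-domain geometry: $\Omega$ is unbounded, so Poincar\'e-type inequalities are unavailable and one must consistently work with homogeneous Sobolev spaces and the far-field conditions \eqref{1.5}, being careful that all the interpolation and Gagliardo--Nirenberg inequalities (the lemma reserved in the preamble) are the correct exterior-domain versions; the decomposition of $u$ via $\mathrm{div}\,u$ and $\mathrm{curl}\,u$ and the associated $L^p$ elliptic estimates have to be the ones valid on exterior domains with the Navier boundary condition, which typically requires handling a harmonic/boundary correction. Second, the boundary terms generated by the Navier-slip condition \eqref{1.4} in the higher-order energy estimates — one repeatedly integrates by parts and must show the boundary integrals either vanish (using $u\cdot n=0$, $\mathrm{curl}\,u\times n=0$) or are absorbable, which forces the use of the curvature identity $(D(u)n)_\tau=-\kappa_\tau u_\tau$ and geometric bounds depending only on $\partial\Omega$. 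Third, and most delicate, propagating the \emph{separate} upper bounds on $\rho$ and $m$: unlike the single Navier--Stokes case where one density is bounded directly from the effective viscous flux, here only the combined pressure $P$ naturally enters the flux-transport identity, so one must run the Zlotnik argument on $P$ and then deduce the individual bounds $\rho\le 2\bar\rho$, $m\le 2\bar m$ — this requires a quantitative argument using $G(\rho,m)$ and the structure $P=\rho^\gamma+m^\alpha$, and making $\varepsilon$ small enough that the oscillation of $P$ stays within the window dictated by $\bar\rho,\bar m$. Everything else is a (lengthy but) routine bootstrap once these three points are handled.
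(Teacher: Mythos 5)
Your overall architecture---local existence plus a continuity argument closing Hoff-type time-weighted functionals built on the material derivative and the effective viscous flux $F$ of \eqref{1.19}, div/curl elliptic estimates adapted to the exterior domain and the slip condition \eqref{1.4}, a Zlotnik argument for the upper bound of the densities, then a higher-order bootstrap and the large-time behavior extracted from time-integrability---is the same as the paper's. The genuine gap is in your step (3). You propose to run the Zlotnik argument on $P$ along particle paths and then ``deduce the individual bounds $\rho\le2\bar\rho$, $m\le2\bar m$'' from the monotone dependence of $P$ on each variable and a ``quantitative argument using $G(\rho,m)$''. This deduction does not work as stated: a pointwise bound on $P$ only gives $\rho\le P^{1/\gamma}$ and $m\le P^{1/\alpha}$, and a Zlotnik bound on $P$ is of the form $P\lesssim \sup P_0+\text{small}\approx\bar\rho^\gamma+\bar m^\alpha$; with, say, $\rho_\infty=0$, $\bar\rho=1$ and $\bar m$ large this yields only $\rho\lesssim(\bar\rho^\gamma+\bar m^\alpha)^{1/\gamma}\gg2\bar\rho$, far from \eqref{1.12}. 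Moreover $G(\rho,m)$ is an integral (energy) quantity controlled by $C_0$; its smallness cannot be converted into a pointwise statement about how the value of $P$ at a given point splits between $\rho^\gamma$ and $m^\alpha$. Since the pointwise density bound is both part of the conclusion and the hypothesis \eqref{3.5} that the continuity argument must close, the proof is incomplete at its most critical point.

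For comparison, the paper (Lemma \ref{Lm3.8}) adds the two continuity equations so that $w=\rho+m$ itself satisfies the transport equation \eqref{3.40}, $D_tw=-\frac{w}{2\mu+\lambda}(P-P_\infty)+b'(t)$ with $b(t)=-\frac{1}{2\mu+\lambda}\int_0^t wF\,d\tau$, and applies Zlotnik's Lemma \ref{Lm2.3} to $w$: the drift is strictly negative once $w\ge\bar\rho+\bar m$ (then $P-P_\infty\gtrsim1$), and the oscillation of $b$ is controlled through $\int\|F\|_{L^\infty}dt$, which is precisely what the time-weighted material-derivative estimates \eqref{3.28}, \eqref{3.3.1} together with \eqref{2.13} provide; this bypasses any need to disentangle $\rho$ and $m$ pointwise from $P$. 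Two lesser discrepancies with your plan: the paper never multiplies the momentum equation by $|u|u$ (no $\int(\rho+m)|u|^3$ estimate is used; the functionals are $A_1,A_2,A_3$ driven by $\sigma^h\dot u$), and the boundary terms are not treated via the curvature form $(D(u)n)_\tau=-\kappa_\tau u_\tau$ but via $u\cdot\nabla u\cdot n=-u\cdot\nabla n\cdot u$ and $u=u^{\bot}\times n$ combined with $\nabla\cdot(g\times h)=\nabla\times g\cdot h-\nabla\times h\cdot g$, converting boundary integrals into volume ones as in \eqref{3.21}--\eqref{3.22}.
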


With \eqref{1.14} at hand, we are able to obtain the following large--time behavior of the gradient of the pressure when vacuum states appear initially. It was just a parrel result which was first established
by Li and his collaborators in \cite{CLL2021}.

\begin{theorem}\label{Thm1.2}
	Under the conditions of Theorem \ref{Thm1.1}, assume further that $P_\infty>0$ and there exists some point $x_0\in\Omega$ such that $P_0(x_0)=0$. Then the unique global classical solution $(\rho,m,u)$ to the problem \eqref{1.1}--\eqref{1.5} obtained in Theorem \ref{Thm1.1} has to blow up as $t\to\infty$ in the sense that for any $3< r<\infty$,
	\begin{equation}\label{1.15}
		\mathop{\rm lim}_{t\to\infty}\|\nabla P(\cdot,t)\|_{L^r}=\infty.
	\end{equation}
\end{theorem}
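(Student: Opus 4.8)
The plan is to argue by contradiction, exploiting the fact that the pressure stays identically zero along the particle trajectory issuing from $x_0$, while \eqref{1.14} forces $P(\cdot,t)$ to converge in $L^q$ to the \emph{positive} constant $P_\infty$. First I would record that $P_0(x_0)=0$ together with $P(\rho,m)=\rho^\gamma+m^\alpha$ (with $\gamma>1$, $\alpha\ge1$, so that both terms are nonnegative) forces $\rho_0(x_0)=m_0(x_0)=0$. Let $X(t)=X(t;x_0)$ solve $\dot X(t)=u(X(t),t)$, $X(0)=x_0$; by the regularity in \eqref{1.13}, in particular the bound $\int_0^T\|\nabla u(\cdot,t)\|_{L^\infty}\,dt<\infty$ obtained while proving Theorem \ref{Thm1.1}, the field $u$ is Lipschitz in $x$ on finite time intervals, so $X(t)$ is globally defined, and since $u\cdot n=0$ on $\partial\Omega$ it remains in $\overline{\Omega}$. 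Evaluating the mass equations along $X(t)$ gives $\frac{d}{dt}\rho(X(t),t)=-\rho(X(t),t)\,{\rm div}\,u(X(t),t)$ and the analogous identity for $m$, hence
\begin{equation*}
  \rho(X(t),t)=\rho_0(x_0)\exp\!\Big(-\!\int_0^t{\rm div}\,u(X(s),s)\,ds\Big)=0,\qquad m(X(t),t)=0,
\end{equation*}
and therefore $P(X(t),t)=0$ for all $t\ge0$.

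Next, suppose \eqref{1.15} fails for some $r\in(3,\infty)$: there exist $M_1>0$ and times $t_n\uparrow\infty$ with $\|\nabla P(\cdot,t_n)\|_{L^r}\le M_1$. Since $r>3$, Morrey's inequality (valid on the exterior domain $\Omega$ with a constant depending only on $r$ and $\Omega$, because $\partial\Omega$ is smooth and compact) gives
\begin{equation*}
  |P(x,t_n)-P(y,t_n)|\le C\,\|\nabla P(\cdot,t_n)\|_{L^r}\,|x-y|^{1-3/r}\le C M_1\,|x-y|^{1-3/r},\qquad x,y\in\Omega .
\end{equation*}
Fix $\delta>0$ so small that $CM_1\delta^{1-3/r}\le P_\infty/2$ and that, by smoothness of $\partial\Omega$, $|B(z,\delta)\cap\Omega|\ge c_0\delta^3$ for every $z\in\overline{\Omega}$ with some $c_0>0$ independent of $z$. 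Taking $z=X(t_n)$ and using $P(X(t_n),t_n)=0$, on $B(X(t_n),\delta)\cap\Omega$ we have $0\le P(\cdot,t_n)\le P_\infty/2$, hence $|P(\cdot,t_n)-P_\infty|\ge P_\infty/2$ there, so
\begin{equation*}
  \int_\Omega|P(\cdot,t_n)-P_\infty|^{r}\,dx\ \ge\ (P_\infty/2)^{r}\,c_0\,\delta^3\ >\ 0\qquad\text{for all }n .
\end{equation*}
Since $r>2$, this contradicts \eqref{1.14}, and \eqref{1.15} follows for every $r\in(3,\infty)$.

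I expect the main obstacle to be establishing the two geometric estimates with constants uniform in $n$ on the unbounded exterior domain: the Hölder bound from Morrey's inequality and the volume lower bound $|B(X(t_n),\delta)\cap\Omega|\gtrsim\delta^3$ must persist even if $X(t_n)$ drifts toward $\partial\Omega$ or out to infinity, and this is exactly where the smoothness and compactness of $\partial\Omega$ are used. A secondary point requiring care is the global-in-time solvability of the trajectory ODE and the justification of the transport computation along $X(t)$, both of which follow from the a priori bound $\int_0^T\|\nabla u(\cdot,t)\|_{L^\infty}\,dt<\infty$ and the classical regularity \eqref{1.13} established in the proof of Theorem \ref{Thm1.1}.
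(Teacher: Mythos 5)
Your proposal is correct, and its skeleton coincides with the paper's: argue by contradiction along a subsequence on which $\|\nabla P\|_{L^r}$ stays bounded, use the particle trajectory emanating from $x_0$ along which $P\equiv 0$, and play this off against the large--time behavior \eqref{1.14} and $P_\infty>0$. Where you genuinely diverge is in the mechanism of the contradiction. The paper invokes the Gagliardo--Nirenberg interpolation \eqref{2.2} with $a=3r/(3r+4(r-3))$ to get
$\|P(\cdot,t_{n_j})-P_\infty\|_{C(\bar\Omega)}\le C C_1^{a}\|P(\cdot,t_{n_j})-P_\infty\|_{L^4}^{1-a}$,
so \eqref{1.14} forces uniform convergence of $P$ to $P_\infty$, which is incompatible with $P(x_0(t),t)\equiv0$. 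You instead use Morrey's embedding to get a H\"older modulus $CM_1|x-y|^{1-3/r}$ at the times $t_n$, and then a uniform interior--volume bound $|B(X(t_n),\delta)\cap\Omega|\ge c_0\delta^3$ to show that a zero of $P$ forces $\int_\Omega|P(\cdot,t_n)-P_\infty|^{r}dx\ge (P_\infty/2)^{r}c_0\delta^3$, contradicting \eqref{1.14} directly in $L^r$. Both routes are sound; the paper's is shorter because the interpolation inequality is already available as a stated lemma, while yours trades it for two elementary geometric facts (quasiconvexity of the exterior domain for the Morrey/H\"older bound and an interior cone/volume condition), both of which do hold uniformly here since $\partial\Omega$ is smooth and compact and the obstacle is bounded. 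A further merit of your write--up is that you actually justify the assertion $P(X(t),t)\equiv0$ -- deducing $\rho_0(x_0)=m_0(x_0)=0$ from $P_0(x_0)=0$, solving the trajectory ODE globally using $\int_0^T\|\nabla u\|_{L^\infty}dt<\infty$, and integrating the two continuity equations along the flow -- whereas the paper simply states the existence of such a path; on the other hand you should note, as the paper implicitly does, that for large $n$ one also has $\|P(\cdot,t_n)-P_\infty\|_{L^r}$ bounded by \eqref{1.14}, which (if one prefers) lets the H\"older estimate be obtained by a standard Stein extension plus Morrey in $\mathbb{R}^3$ rather than a chain--of--balls argument in $\Omega$.
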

\begin{remark}
	Since $q>3$, it follows from Sobolev's inequality and $\eqref{1.13}_1$ that
	\begin{equation}\label{1.16}
		\rho-\rho_\infty,\ \ \nabla\rho,\ \ m-m_\infty,\ \ \nabla m\in C\left(\bar{\Omega}\times[0,T]\right).
	\end{equation}
Moreover, it also follows from $\eqref{1.13}_2$ and $\eqref{1.13}_3$ that
\begin{equation}\label{1.17}
	u,\nabla u,\nabla^2 u,u_t\in C\left(\bar{\Omega}\times[\tau,T]\right),
\end{equation}
due to the following simple fact that
\begin{equation*}
	L^2(\tau,T;H^1)\cap H^1(\tau,T;H^{-1})\hookrightarrow C([\tau,T];L^2).
\end{equation*}
Finally, by $\eqref{1.1}_1$ and $\eqref{1.1}_2$, we have
\begin{equation*}
	\rho_t=-u\cdot\nabla\rho-\rho{\rm div}u\in C\left(\bar{\Omega}\times[\tau,T]\right),
\end{equation*}
\begin{equation*}
	m_t=-u\cdot\nabla m-m{\rm div}u\in C\left(\bar{\Omega}\times[\tau,T]\right),
\end{equation*}
which together with \eqref{1.16} and \eqref{1.17} shows that the solution obtained by Theorem \ref{Thm1.1} is a classical one.
\end{remark}
\begin{remark}
	When $\alpha\le1$ and $\gamma>1$, it is easy to show that $\exists0<C_i<1~(i=1,2)$ depends on $\bar{\rho}$, $\bar{m}$, $\rho_\infty$ and $m_\infty$, the following formula holds
	\begin{equation*}
		C_1(m-m_\infty)^2+C_2(\rho-\rho_\infty)^2\le m\int_{m_\infty}^{m}\frac{s-m_\infty}{s^2}ds+\rho\int_{\rho_\infty}^{\rho}\frac{s-\rho_\infty}{s^2}ds.
	\end{equation*}
\end{remark}

Now, we give some comments on the analysis of this paper. Compared with the bounded domains, because the domain is unbounded, there are two difficulties that we have overcome.
First, thanks to \cite{VW1992}(see Lemma $\ref{Lm2.6}$), we can control $\nabla u$ by means of ${\rm div}u$ and ${\rm curl}u$, the other one is how to control the boundary integrals, especially (see \eqref{3.22}),
\begin{equation*} -\int_{\partial\Omega}\sigma^hFu\cdot(\nabla n+(\nabla n)^{tr})^i(u^{\bot}\times n\cdot\nabla u^i)ds.
\end{equation*}
In fact, thanks to
$$\nabla\cdot(g\times h)=\nabla\times g\cdot h-\nabla\times h\cdot g, \nabla\times(\nabla g)=0$$ and divergence theorem, we can control it.

Next, denote
\begin{equation}\label{1.18}
	\dot{v}\triangleq v_t+u\cdot\nabla v,
\end{equation}
and
\begin{equation}\label{1.19}
	F\triangleq(\lambda+2\mu){\rm div}u-(P-{P}_\infty),
\end{equation}
are the material derivative of $v$ and the effective viscous flux, respectively.
Then the equation $\eqref{1.1}_3$ can be written as
\begin{equation}\label{1.20}
	(\rho+m)\dot{u}=\nabla F-\mu\nabla\times{\rm curl}u,
\end{equation}
which together with the boundary condition \eqref{1.4} implies that one can treat $\eqref{1.1}_3$ as a Helmholtz--Wyle decomposition of $(\rho+m)\dot{u}$ which makes it possible to estimate $\nabla F$ and $\nabla{\rm curl}u$. Finally, whereas $u\cdot n=0$ on $\partial\Omega$, we have
\begin{equation}\label{1.21}
	u\cdot\nabla u\cdot n=-u\cdot\nabla n\cdot u,
\end{equation}
which together with ${\rm curl}u\times n=0$ on $\partial\Omega$ is the key to estimating the integrals on the boundary $\partial\Omega$ .
\section {Preliminaries}\label{S2}
\par In this section, we will collect some known facts and elementary inequalities which will be used frequently later. Firstly, we can get the local existence of strong and classical solutions, its proof is similar to \cite{H2021}.
\begin{lemma}\label{Lm2.1}Assume $\Omega$ satisfies the condition of Theorem \ref{Thm1.1} and $(\rho_0,m_0,u_0)$ satisfies \eqref{1.7}, \eqref{1.8} and \eqref{1.10}. Then there exists a small time $T_0>0$ and a unique strong solution $(\rho,m,u)$ to the problem \eqref{1.1}--\eqref{1.5} on $\Omega\times(0,T_0]$ satisfying for any $\tau\in(0,T_0)$,
\[\begin{cases}
	(\rho-\rho_\infty,m-m_\infty,P-P_\infty)\in C([0,\infty);H^2\cap W^{2,q}),\\
	u\in C([0,\infty);D^1\cap D^2),\ \nabla u\in L^2(0,T;H^2)\cap L^{p_0}(0,T;W^{2,q}),\\
	\nabla u\in L^{\infty}(\tau,T;H^2\cap W^{2,q}),\\
	u_t\in L^\infty(\tau,T;D^1\cap D^2)\cap H^1(\tau,T;D^1),\\
	\sqrt{\rho+m}u_t\in L^\infty(0,T;L^2),
\end{cases}\]
where $q\in(3,6)$ and $p_0=\frac{9q-6}{10q-12}\in(1,\frac{7}{6})$.
\end{lemma}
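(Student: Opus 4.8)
The plan is to construct the local solution by the classical linearization-and-iteration scheme, following \cite{H2021}, with extra care for the unbounded domain, the slip boundary condition and the possible vacuum. If $\rho_\infty$ or $m_\infty$ vanishes, I would first regularize by replacing $(\rho_0,m_0)$ with $(\rho_0+\delta,m_0+\delta)$ (so the far-field constants become $\rho_\infty+\delta$, $m_\infty+\delta$) to keep the density strictly positive, recovering the vacuum case by letting $\delta\to0$ at the end. Given an iterate $v$ with $v(\cdot,0)=u_0$ in the energy class, I would solve the two linear transport equations $\rho_t+{\rm div}(\rho v)=0$ and $m_t+{\rm div}(mv)=0$ with data $(\rho_0,m_0)$; since $P=\rho^\gamma+m^\alpha$ with $\rho,m$ transported, $P-P_\infty$ solves a linear transport equation with a ${\rm div}\,v$-source, so standard transport estimates give $(\rho-\rho_\infty,m-m_\infty,P-P_\infty)\in C([0,T];H^2\cap W^{2,q})$ and the propagated bounds $\rho\ge0$, $m\ge0$ on a short interval, with norms controlled by the data. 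Freezing $(\rho,m)$, I would then solve the linear Lam\'e-type problem
\[
(\rho+m)u_t+(\rho+m)v\cdot\nabla u-\mu\Delta u-(\lambda+\mu)\nabla{\rm div}\,u=-\nabla P
\]
for $u$ with $u\cdot n=0$, ${\rm curl}\,u\times n=0$ on $\partial\Omega$ and $u\to0$ at infinity; this defines the map $v\mapsto u$.

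Next I would close a priori bounds uniform in the iteration on a small time $T_0$ depending only on the data. The delicate point is the vacuum: one cannot divide the momentum equation by $\rho+m$, so the $L^\infty(\tau,T;\cdot)$ bounds on $u_t$ and the $H^1(\tau,T;D^1)$ bound would be obtained by differentiating the momentum equation in $t$, testing against $u_t$, and recovering $\nabla^2u$ from $(\rho+m)\dot u+\nabla P$ via the elliptic Lam\'e-with-slip estimate in the exterior domain, i.e. the decomposition \eqref{1.20} together with the inequality controlling $\nabla u$ by ${\rm div}\,u$ and ${\rm curl}\,u$; the value of $\|\sqrt{\rho+m}\,u_t\|_{L^2}$ at $t=0$ is controlled precisely by the compatibility condition \eqref{1.10} through $g\in L^2$. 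All boundary integrals arising here I would handle as indicated in the introduction, using $u\cdot\nabla u\cdot n=-u\cdot\nabla n\cdot u$ on $\partial\Omega$ (see \eqref{1.21}) and ${\rm curl}\,u\times n=0$, while the behaviour at infinity is handled by working throughout in the homogeneous spaces $D^{k,q}$.

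Finally I would show that the map $v\mapsto u$ is a contraction in a weaker metric — the difference of two iterates measured in $C([0,T];L^2)$ for $(\rho,m,P)$ and in $L^\infty(0,T;L^2)\cap L^2(0,T;D^1)$ for $u$ — on a possibly smaller interval, using Gronwall and the uniform bounds above; passing to the limit yields a solution of \eqref{1.1}--\eqref{1.5} which, by interpolation with the uniform higher-order bounds and lower semicontinuity of norms under weak limits, has the regularity stated, and uniqueness follows from the same contraction estimate applied to two solutions. The main obstacle I expect is the combination of vacuum and unbounded domain in the top-order estimates for $u$: establishing the Lam\'e-system elliptic regularity with slip boundary conditions in the exterior domain (so that $\|\nabla^2u\|_{L^q}$ is bounded by $\|\nabla F\|_{L^q}+\mu\|\nabla{\rm curl}\,u\|_{L^q}$ plus lower-order terms, with the correct decay at infinity) and then initializing the $u_t$-estimate without dividing by $\rho+m$, relying solely on \eqref{1.10}. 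Once these are in place, the transport estimates, the contraction argument and the boundary-term manipulations are routine.
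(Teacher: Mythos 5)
The paper offers no self-contained proof of Lemma \ref{Lm2.1}: it only asserts that the proof is ``similar to'' \cite{H2021} (for the exterior domain and slip condition one should also compare \cite{CLL2021}), and your linearize--iterate--contract scheme --- transport equations for $\rho,m,P$, a linear Lam\'e problem with slip conditions for $u$, uniform short-time bounds initialized through the compatibility condition \eqref{1.10}, second-order regularity recovered from \eqref{1.20} via the div--curl estimates, and a contraction in a weak norm --- is precisely that standard route, so in outline you are doing what the authors intend.

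One concrete step, however, would fail as written: regularizing only the densities, $(\rho_0,m_0)\mapsto(\rho_0+\delta,m_0+\delta)$, while keeping $u_0$ and $g$ fixed, does not preserve the compatibility condition uniformly in $\delta$. At $t=0$ the momentum equation gives $\sqrt{\rho_0^\delta+m_0^\delta}\,u_t(\cdot,0)=-\sqrt{\rho_0^\delta+m_0^\delta}\,u_0\cdot\nabla u_0+\bigl[(\rho_0+m_0)^{1/2}g-\bigl(\nabla P(\rho_0+\delta,m_0+\delta)-\nabla P_0\bigr)\bigr](\rho_0+m_0+2\delta)^{-1/2}$, and on the vacuum region the last term is of size $\delta^{\gamma-1}|\nabla\rho_0|/\sqrt{\delta}$ (resp.\ $\delta^{\alpha-1}|\nabla m_0|/\sqrt{\delta}$), which is not uniformly bounded in $L^2$ when $1<\gamma<3/2$ (or $1<\alpha<3/2$); so the key initialization of the $\|\sqrt{\rho+m}\,u_t\|_{L^2}$ estimate, which you correctly identify as the crux, is lost for part of the admissible exponent range. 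The standard repair, used in \cite{HLX2012,H2021}, is to adjust the approximate initial velocity as well: define $u_0^\delta$ as the solution of $-\mu\Delta u_0^\delta-(\lambda+\mu)\nabla{\rm div}u_0^\delta+\nabla P(\rho_0^\delta,m_0^\delta)=(\rho_0^\delta+m_0^\delta)^{1/2}g$ with the slip conditions \eqref{1.4}, so that compatibility holds exactly with the same $g$, and verify $u_0^\delta\to u_0$ in $D^1\cap D^2$. With this modification (and, if you solve the linear problems by Galerkin, an invading-domains approximation on $\Omega\cap B_R$ with estimates uniform in $R$ before letting $R\to\infty$, as in \cite{CLL2021}), the remainder of your plan goes through.
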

\par Secondly, the following Gagliardo--Nirenberg's inequality (see\cite{N2011}) will be used frequently later.

\begin{lemma}
Let $\Omega$ be the exterior of a simply connected domain $D$ in $\mathbb{R}^3$.  For any $f\in H^1(\Omega)$ and $g\in L^q(\Omega)\cap D^{1,r}(\Omega)$, there exists some generic constants $C>0$ which may depend on p, q and r such that,
\begin{equation}\label{2.1}
	\|f\|_{L^p(\Omega)}\le C\|f\|_{L^2}^\frac{6-p}{2p}\|\nabla f\|_{L^2}^{\frac{3p-6}{2p}},
\end{equation}
\begin{equation}\label{2.2}
	\|g\|_{C(\bar{\Omega})}\le C\|g\|_{L^q}^{q(r-3)/(3r+q(r-3))}\|\nabla g\|_{L^r}^{3r/(3r+q(r-3))},
\end{equation}
for $p\in[2,6]$, $q\in(1,\infty)$, and $r\in(3,\infty)$.
\end{lemma}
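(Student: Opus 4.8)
\medskip
\noindent\emph{Proof proposal.} The plan is to reduce both estimates to their classical counterparts on $\mathbb{R}^{3}$ by means of a bounded extension operator, exploiting that $\partial\Omega=\partial D$ is smooth and compact. First I would fix a bounded linear extension operator $E$ taking functions on $\Omega$ to functions on $\mathbb{R}^{3}$ with $Ef|_{\Omega}=f$, with $Ef\in C(\mathbb{R}^{3})$ when $f\in C(\bar\Omega)$, $Ef\in H^{1}(\mathbb{R}^{3})$ when $f\in H^{1}(\Omega)$, and --- crucially --- satisfying the \emph{homogeneous} bounds
\[
\|Ef\|_{L^{s}(\mathbb{R}^{3})}\le C\|f\|_{L^{s}(\Omega)},\qquad \|\nabla Ef\|_{L^{s}(\mathbb{R}^{3})}\le C\|\nabla f\|_{L^{s}(\Omega)},\qquad 1<s<\infty,
\]
with $C=C(s,\Omega)$. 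Such an operator is available for three--dimensional exterior domains with smooth compact boundary (cf.\ \cite{N2011}): the point is that a function in $D^{1,s}(\Omega)\cap L^{s}(\Omega)$ decays at infinity, so that patching across the bounded hole $D$ creates no constant--mode obstruction to bounding the gradient on $\mathbb{R}^{3}$ by the gradient on $\Omega$ alone.

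Granting this, for \eqref{2.1} I would proceed in two steps. Writing $\theta:=\frac{3p-6}{2p}\in[0,1]$ for $p\in[2,6]$, so that $1-\theta=\frac{6-p}{2p}$ and $\frac1p=\frac{1-\theta}{2}+\frac{\theta}{6}$, Hölder's inequality gives at once
\[
\|f\|_{L^{p}(\Omega)}\le\|f\|_{L^{2}(\Omega)}^{1-\theta}\,\|f\|_{L^{6}(\Omega)}^{\theta},
\]
so it remains only to prove the endpoint inequality $\|f\|_{L^{6}(\Omega)}\le C\|\nabla f\|_{L^{2}(\Omega)}$. For this I would apply $E$ together with the classical Sobolev inequality $\|h\|_{L^{6}(\mathbb{R}^{3})}\le C\|\nabla h\|_{L^{2}(\mathbb{R}^{3})}$ valid for $h\in H^{1}(\mathbb{R}^{3})$:
\[
\|f\|_{L^{6}(\Omega)}\le\|Ef\|_{L^{6}(\mathbb{R}^{3})}\le C\|\nabla Ef\|_{L^{2}(\mathbb{R}^{3})}\le C\|\nabla f\|_{L^{2}(\Omega)}.
\]

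For \eqref{2.2} I would set $\tilde g:=Eg\in C(\mathbb{R}^{3})$, with $\|\tilde g\|_{L^{q}(\mathbb{R}^{3})}\le C\|g\|_{L^{q}(\Omega)}$ and $\|\nabla\tilde g\|_{L^{r}(\mathbb{R}^{3})}\le C\|\nabla g\|_{L^{r}(\Omega)}$, and then run the standard Morrey interpolation on the whole space. For $x\in\mathbb{R}^{3}$ and $\rho>0$, splitting $\tilde g(x)$ into its ball average over $B_{\rho}(x)$ plus the oscillation, the Morrey estimate (valid since $r>3$) and Hölder's inequality give
\[
|\tilde g(x)|\le\bigl|\langle\tilde g\rangle_{B_{\rho}(x)}\bigr|+\bigl|\tilde g(x)-\langle\tilde g\rangle_{B_{\rho}(x)}\bigr|\le C\rho^{-3/q}\|\tilde g\|_{L^{q}(B_{\rho}(x))}+C\rho^{\,1-3/r}\|\nabla\tilde g\|_{L^{r}(B_{\rho}(x))}.
\]
Bounding the two terms by the global norms over $\mathbb{R}^{3}$ and choosing $\rho=\bigl(\|\tilde g\|_{L^{q}(\mathbb{R}^{3})}/\|\nabla\tilde g\|_{L^{r}(\mathbb{R}^{3})}\bigr)^{1/(1-3/r+3/q)}$ so that the two contributions balance, a short computation of the exponents produces
\[
|\tilde g(x)|\le C\|\tilde g\|_{L^{q}(\mathbb{R}^{3})}^{\,q(r-3)/(3r+q(r-3))}\,\|\nabla\tilde g\|_{L^{r}(\mathbb{R}^{3})}^{\,3r/(3r+q(r-3))};
\]
taking the supremum over $x$ and reinserting the extension bounds yields \eqref{2.2}.

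The main obstacle is the very first step: one needs an extension operator for the \emph{unbounded} domain $\Omega$ that is bounded on the homogeneous spaces $D^{1,s}$ (not merely on $W^{1,s}$), since otherwise the right--hand sides of \eqref{2.1}--\eqref{2.2} would pick up a spurious lower--order term. Beyond that, everything reduces to the classical Sobolev, Hölder and Morrey inequalities on $\mathbb{R}^{3}$.
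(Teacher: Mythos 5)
Your argument is essentially sound, but note that the paper offers no proof of this lemma at all: it simply quotes the Gagliardo--Nirenberg inequalities for exterior domains with a pointer to \cite{N2011}, so there is no ``paper proof'' for yours to parallel, and what you give is a reasonable self-contained substitute. The interpolation step for \eqref{2.1}, the reduction to the endpoint $\|f\|_{L^6(\Omega)}\le C\|\nabla f\|_{L^2(\Omega)}$, and the Morrey ball-average computation for \eqref{2.2} (whose exponent bookkeeping is correct) are all fine. The only load-bearing assertion you leave unproved is the existence of the extension operator with \emph{homogeneous} bounds, and there your stated justification is off target: decay of $f$ at infinity is irrelevant, since the extension only modifies the function on the bounded hole $D$. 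The correct mechanism is local near the compact boundary: fix a bounded connected collar $A\subset\Omega$ around $\partial\Omega$, set $Ef=f$ on $\Omega$ and $Ef=\tilde E(f-\bar f_A)+\bar f_A$ on $D$, where $\bar f_A$ is the mean of $f$ over $A$ and $\tilde E$ is a standard Stein/reflection extension across the smooth compact boundary, bounded on $L^s$ and $W^{1,s}$ for every $s\in(1,\infty)$ simultaneously; Poincar\'e--Wirtinger on $A$ gives $\|f-\bar f_A\|_{W^{1,s}(A)}\le C\|\nabla f\|_{L^s(A)}$, which yields $\|\nabla Ef\|_{L^s(\mathbb{R}^3)}\le C\|\nabla f\|_{L^s(\Omega)}$, and since $D$ is bounded and $|\bar f_A|\le C\|f\|_{L^s(\Omega)}$ one also gets $\|Ef\|_{L^s(\mathbb{R}^3)}\le C\|f\|_{L^s(\Omega)}$, for the two exponents $q\ne r$ at once. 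With that repair (or with a citation to a source that actually proves these inequalities on exterior domains, e.g.\ Galdi's monograph --- \cite{N2011} concerns the whole space), your proof is complete; two cosmetic remarks are that the requirement $Ef\in C(\mathbb{R}^3)$ is unnecessary, since continuity of $Eg$ already follows from Morrey once $\nabla Eg\in L^r$ with $r>3$, and that the balancing choice of $\rho$ should be preceded by disposing of the trivial cases $\|\tilde g\|_{L^q}=0$ or $\|\nabla\tilde g\|_{L^r}=0$.
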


Then, in order to get the uniform (in time) upper bound of the density $\rho$ and $m$, we need the following Zlotnik inequality which was first used in Huang--Li--Xin \cite{H2021}.
\begin{lemma}[\cite{Z2000}]\label{Lm2.3}For $g\in C(R)$ and $y,b\in W^{1,1}(0,T)$, assume that the function y satisfies
\begin{align*}
	y'(t)=g(y)+b'(t) \text{ on } [0,T],\,\  y(0)=y^0.
\end{align*}
 If $g(\infty)=-\infty$ and
\begin{equation}\label{2.3}
	b(t_2)-b(t_1)\le N_0+N_1(t_2-t_1)
\end{equation}
for all $0\le t_1<t_2\le T$ with some $N_0\ge0$ and $N_1\ge0$, then
\begin{align*}
	y(t)\le {\rm max}
	\left\{y^0,\hat{\zeta}\right\} +N_0<\infty\ on\ [0,T],
\end{align*}
where $\hat{\zeta}$ is a constant such that
\begin{equation}\label{2.4}
	g(\zeta)\le-N_1\ for\ \zeta\ge\hat{\zeta}.
\end{equation}
\end{lemma}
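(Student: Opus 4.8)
The plan is to prove Lemma \ref{Lm2.3} by a direct ODE comparison (barrier) argument: the hypothesis $g(\infty)=-\infty$ forces $y$ to decrease (modulo the contribution of $b'$) whenever it is large, while \eqref{2.3} says the driving term $b$ cannot contribute more than $N_0+N_1(t_2-t_1)$ over any subinterval, and combining these two facts pins $y$ below $\max\{y^0,\hat\zeta\}+N_0$. First I would check that the conclusion is even well posed: since $g\in C(\mathbb R)$ and $g(\infty)=-\infty$, for the given $N_1\ge0$ there is indeed a finite $\hat\zeta$ with $g(\zeta)\le-N_1$ for all $\zeta\ge\hat\zeta$. Moreover $y,b\in W^{1,1}(0,T)$ are absolutely continuous, hence continuous, on $[0,T]$, so $g(y(\cdot))$ is continuous and the identity $y(t_2)-y(t_1)=\int_{t_1}^{t_2}\big(g(y(s))+b'(s)\big)\,ds$ is available for all $0\le t_1\le t_2\le T$.

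Next I would fix an arbitrary $t\in[0,T]$ and split into two cases according to whether $y$ reaches the level $\hat\zeta$ before time $t$. \emph{Case 1:} $y(s)>\hat\zeta$ for every $s\in[0,t]$. Then $y^0>\hat\zeta$ and, for a.e.\ $s\in[0,t]$, $g(y(s))\le-N_1$, hence $y'(s)\le-N_1+b'(s)$; integrating over $[0,t]$ and using \eqref{2.3} with $t_1=0$, $t_2=t$ gives $y(t)-y^0\le-N_1t+\big(b(t)-b(0)\big)\le N_0$, so $y(t)\le y^0+N_0\le\max\{y^0,\hat\zeta\}+N_0$. \emph{Case 2:} the set $S_t\triangleq\{s\in[0,t]:y(s)\le\hat\zeta\}$ is nonempty. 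Since $y$ is continuous, $S_t$ is closed, so $t^\ast\triangleq\sup S_t$ lies in $S_t$; thus $y(t^\ast)\le\hat\zeta$, while $y(s)>\hat\zeta$ for all $s\in(t^\ast,t]$.

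On $[t^\ast,t]$ the reasoning of Case 1 applies verbatim: $g(y(s))\le-N_1$ a.e., so $y'(s)\le-N_1+b'(s)$ a.e., and integrating from $t^\ast$ to $t$ together with \eqref{2.3} (with $t_1=t^\ast$, $t_2=t$) yields $y(t)-y(t^\ast)\le-N_1(t-t^\ast)+\big(b(t)-b(t^\ast)\big)\le N_0$. Hence $y(t)\le y(t^\ast)+N_0\le\hat\zeta+N_0\le\max\{y^0,\hat\zeta\}+N_0$. Since $t\in[0,T]$ was arbitrary, combining the two cases gives $y(t)\le\max\{y^0,\hat\zeta\}+N_0<\infty$ on $[0,T]$, as claimed.

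The argument is elementary and I do not expect a real obstacle; the only delicate points are bookkeeping ones. One must legitimately pass from the $W^{1,1}$ hypothesis to the fundamental theorem of calculus (absolute continuity) and check that $g(y(\cdot))$ is an admissible integrand, and one must choose the comparison time correctly: because $y$ may cross the level $\hat\zeta$ many times on $[0,t]$, a ``first exit time'' does not work, whereas taking the \emph{last} time $t^\ast\le t$ at which $y\le\hat\zeta$ is precisely what makes the one-sided differential inequality $y'\le-N_1+b'$ valid on the whole interval $[t^\ast,t]$, which is the crux of the estimate.
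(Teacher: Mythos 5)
Your argument is correct. The paper itself offers no proof of this lemma --- it is quoted verbatim from Zlotnik \cite{Z2000} --- and your barrier/comparison argument is precisely the standard one: the choice of the \emph{last} time $t^\ast\le t$ with $y(t^\ast)\le\hat\zeta$ (rather than a first exit time) is the right move, since it makes $y'\le -N_1+b'$ hold a.e.\ on all of $[t^\ast,t]$, and the absolute continuity of $y,b\in W^{1,1}(0,T)$ justifies the integration; the only cosmetic remark is that the degenerate situations $t=0$ and $t^\ast=t$ are covered trivially by $N_0\ge0$ without invoking \eqref{2.3}, which is stated only for $t_1<t_2$.
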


Next, thanks to \cite{A2014,VW1992}, we have the following two lemmas.
\begin{lemma}\label{Lm2.4}
	Assume a simply connected bounded domain $D\subset\mathbb{R}^3$ with $C^{k+1,1}$ boundary $\partial D$, $1<q<+\infty$ and a integer $k\ge0$, then for $v\in W^{k+1,q}(D)$ with $v\cdot n=0$ on $\partial D$, there exists a constant $C=C(q,k,D)$ such that
	\begin{equation}
		\|v\|_{W^{k+1,q}(D)}\le C\left(\|{\rm div}v\|_{W^{k,q}(D)}+\|{\rm curl}v\|_{W^{k,q}(D)}\right).
	\end{equation}
If $k=0$, it holds that
\begin{equation}
	\|\nabla v\|_{L^q(D)}\le C(\|{\rm div}v\|_{L^q(D)}+\|{\rm curl}v\|_{L^q(D)}).
\end{equation}	
\end{lemma}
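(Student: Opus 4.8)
The plan is to deduce the div--curl bound from classical $L^{q}$ elliptic regularity after a Helmholtz-type splitting of $v$, and then to discard the resulting lower-order term by a compactness argument in which the simple connectivity of $D$ is used. Throughout, $C$ denotes a constant depending only on $q$, $k$ and $D$.

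First I would peel off the gradient part. Given $v\in W^{k+1,q}(D)$ with $v\cdot n=0$ on $\partial D$, put $f:={\rm div}v\in W^{k,q}(D)$; since $\int_{D}f\,dx=\int_{\partial D}v\cdot n\,ds=0$, the Neumann problem
\[
\Delta\phi=f \text{ in } D,\quad \partial_{n}\phi=0 \text{ on } \partial D,\quad \int_{D}\phi\,dx=0,
\]
is uniquely solvable, and the classical $L^{q}$ theory for the Neumann Laplacian on the $C^{k+1,1}$ domain $D$ gives $\|\nabla\phi\|_{W^{k+1,q}(D)}\le C\|{\rm div}v\|_{W^{k,q}(D)}$. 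Set $w:=v-\nabla\phi\in W^{k+1,q}(D)$; then ${\rm div}w=0$, ${\rm curl}w={\rm curl}v$ and $w\cdot n=v\cdot n-\partial_{n}\phi=0$ on $\partial D$. Using the identity $-\Delta w={\rm curl}\,{\rm curl}w-\nabla\,{\rm div}w={\rm curl}\,{\rm curl}v$, the field $w$ solves
\[
-\Delta w={\rm curl}\,{\rm curl}v \text{ in } D,\quad w\cdot n=0,\quad ({\rm curl}w)\times n=({\rm curl}v)\times n \text{ on } \partial D .
\]
This is an elliptic boundary value problem for the vector Laplacian whose boundary operator $\big(w\cdot n,\,({\rm curl}w)\times n\big)$ satisfies the Lopatinskii--Shapiro complementing condition; hence the Agmon--Douglis--Nirenberg $L^{q}$ estimates, in the form used in \cite{A2014,VW1992}, yield
\[
\|w\|_{W^{k+1,q}(D)}\le C\Big(\|{\rm curl}\,{\rm curl}v\|_{W^{k-1,q}(D)}+\big\|({\rm curl}v)\times n\big\|_{W^{k-1/q,q}(\partial D)}+\|w\|_{L^{q}(D)}\Big)\le C\big(\|{\rm curl}v\|_{W^{k,q}(D)}+\|w\|_{L^{q}(D)}\big),
\]
the last inequality by the trace theorem (when $k=0$ the first two right-hand norms are read as the appropriate negative-order spaces). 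Since $\|w\|_{L^{q}}\le\|v\|_{L^{q}}+\|\nabla\phi\|_{L^{q}}\le\|v\|_{L^{q}}+C\|{\rm div}v\|_{W^{k,q}}$, combining these bounds gives the provisional estimate
\[
\|v\|_{W^{k+1,q}(D)}\le C\Big(\|{\rm div}v\|_{W^{k,q}(D)}+\|{\rm curl}v\|_{W^{k,q}(D)}+\|v\|_{L^{q}(D)}\Big).
\]

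To remove the $\|v\|_{L^{q}(D)}$ term I would argue by contradiction. If the asserted estimate failed, there would be $v_{j}\in W^{k+1,q}(D)$ with $v_{j}\cdot n=0$ on $\partial D$, $\|v_{j}\|_{W^{k+1,q}(D)}=1$ and $\|{\rm div}v_{j}\|_{W^{k,q}(D)}+\|{\rm curl}v_{j}\|_{W^{k,q}(D)}\to0$; by the provisional estimate $\|v_{j}\|_{L^{q}(D)}$ would stay bounded away from $0$. As $D$ is bounded, $W^{k+1,q}(D)\hookrightarrow\hookrightarrow L^{q}(D)$, so along a subsequence $v_{j}\to v$ in $L^{q}(D)$ and $v_{j}\rightharpoonup v$ in $W^{k+1,q}(D)$, whence $v\ne0$ while ${\rm div}v=0$, ${\rm curl}v=0$ in $D$ and $v\cdot n=0$ on $\partial D$. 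Since $D$ is simply connected, the space of such vector fields is trivial (cf. \cite{VW1992}), so $v=0$ --- a contradiction. This gives $\|v\|_{W^{k+1,q}(D)}\le C\big(\|{\rm div}v\|_{W^{k,q}(D)}+\|{\rm curl}v\|_{W^{k,q}(D)}\big)$, and the case $k=0$ is the corresponding special instance, which in particular bounds $\|\nabla v\|_{L^{q}(D)}$.

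The soft parts are the Helmholtz splitting, the Neumann regularity, the trace estimate and the compactness step. The main obstacle is the middle step: checking that $\big(w\cdot n,\,({\rm curl}w)\times n\big)$ is a complementing boundary system for $-\Delta$ and that the associated $L^{q}$ Calder\'on--Zygmund/ADN estimate holds on a domain of merely $C^{k+1,1}$ regularity (this is precisely what is quoted from \cite{VW1992}); the localization and boundary-flattening argument behind it is the heaviest technical ingredient, and it is also where the hypothesis $\partial D\in C^{k+1,1}$ is consumed.
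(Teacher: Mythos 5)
The paper offers no proof of Lemma \ref{Lm2.4}: it is quoted as a known div--curl estimate and the reader is referred to \cite{A2014,VW1992}. So your argument is not competing with an argument in the paper but reconstructing the one in the cited literature, and in outline it is the standard one: peel off the gradient part by solving a Neumann problem for ${\rm div}\,v$ (the compatibility $\int_D {\rm div}\,v\,dx=0$ coming from $v\cdot n=0$), estimate the solenoidal remainder $w$ through the elliptic system for the vector Laplacian under the boundary operators $\left(w\cdot n,({\rm curl}\,w)\times n\right)$, and remove the lower-order term $\|v\|_{L^q}$ by Rellich compactness together with the fact that on a simply connected domain a field with ${\rm div}\,v={\rm curl}\,v=0$ and $v\cdot n=0$ is a gradient of a Neumann-harmonic function, hence zero; this last step and the compactness argument are correct as you wrote them. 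Two caveats. First, your central ingredient --- the complementing-condition/ADN estimate for the boundary system $\left(w\cdot n,({\rm curl}\,w)\times n\right)$ ``in the form used in \cite{A2014,VW1992}'' --- is essentially the content of the lemma being proved, so as written your argument is closer to a reduction to the cited result than an independent derivation; since the paper itself only cites those works, this is acceptable, but it should be stated as such. Second, the case $k=0$ does not follow by merely ``reading the norms as negative-order spaces'': for $v\in W^{1,q}$ one only has ${\rm curl}\,v\in L^q$, whose tangential trace $({\rm curl}\,v)\times n$ is not defined in general (one would need ${\rm curl}\,{\rm curl}\,v$ in $L^q$, or a duality/weak formulation), so the $W^{1,q}$ bound must be obtained directly --- this is exactly what von Wahl's theorem in \cite{VW1992} supplies --- rather than as a degenerate instance of your $k\ge1$ scheme. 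With these two points acknowledged, your sketch matches the route taken in the references the paper relies on.
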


\begin{lemma}\label{Lm2.5}
	Assume a bounded domain $D\subset\mathbb{R}^3$ and $C^{k+1,1}$ boundary only has a finite number of two--dimensional connected components. The integer $k\ge0$ and\ $1<q<\infty$, for $v\in W^{k+1,q}(D)$ with $v\times n=0$ on $\partial D$, then exists a positive constant C depending only on $q$, $k$, $\Omega$ such that
	\begin{align*}
		\|v\|_{W^{k+1,q}(D)}\le C(\|{\rm div}v\|_{W^{k,q}(D)}+\|{\rm curl}v\|_{W^{k,q}(D)}+\|v\|_{L^q(D)}).
	\end{align*}
	If D has no holes, then
	\begin{align*}
		\|v\|_{W^{k+1,q}(D)}\le C(\|{\rm div}v\|_{W^{k,q}(D)}+\|{\rm curl}v\|_{W^{k,q}(D)}).
	\end{align*}
\end{lemma}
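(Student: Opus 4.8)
The plan is to split off the gradient part of $v$ via a scalar Dirichlet problem, reducing matters to a divergence--free field with vanishing tangential trace, then to invoke the $L^q$ elliptic estimate for the vector Laplacian under the boundary operator $w\mapsto(w\times n,{\rm div}w)$, which is one of the two standard boundary operators satisfying the Lopatinskii--Shapiro complementing condition for $-\Delta$. Concretely, first let $\phi\in W^{k+2,q}(D)$ solve $\Delta\phi={\rm div}v$ in $D$ with $\phi=0$ on $\partial D$; since the Dirichlet Laplacian on the $C^{k+1,1}$ domain $D$ is invertible on $W^{k+2,q}$, one has $\|\phi\|_{W^{k+2,q}(D)}\le C\|{\rm div}v\|_{W^{k,q}(D)}$. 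Put $w\triangleq v-\nabla\phi$. Then ${\rm div}w=0$ and ${\rm curl}w={\rm curl}v$ in $D$; and since $\phi$ is constant on $\partial D$ its gradient is normal there, so $\nabla\phi\times n=0$ on $\partial D$ and hence $w\times n=0$ on $\partial D$.

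Next, $w$ satisfies the boundary value problem
$$-\Delta w={\rm curl}({\rm curl}v)\ \text{ in }D,\qquad w\times n=0,\quad {\rm div}w=0\ \text{ on }\partial D.$$
Because the pair $(w\times n,{\rm div}w)$ is complementing for $-\Delta$ on vector fields, the Agmon--Douglis--Nirenberg $L^q$ theory applies --- interior Calder\'on--Zygmund bounds together with, after flattening $\partial D$ in $C^{k+1,1}$ charts, the boundary estimate furnished by the complementing condition; see \cite{A2014,VW1992}. This yields
$$\|w\|_{W^{k+1,q}(D)}\le C\left(\|{\rm curl}({\rm curl}v)\|_{W^{k-1,q}(D)}+\|w\|_{L^q(D)}\right)\le C\left(\|{\rm curl}v\|_{W^{k,q}(D)}+\|w\|_{L^q(D)}\right).$$
Since $\|w\|_{L^q(D)}\le\|v\|_{L^q(D)}+\|\nabla\phi\|_{L^q(D)}\le\|v\|_{L^q(D)}+C\|{\rm div}v\|_{W^{k,q}(D)}$ and $\|\nabla\phi\|_{W^{k+1,q}(D)}\le\|\phi\|_{W^{k+2,q}(D)}\le C\|{\rm div}v\|_{W^{k,q}(D)}$, combining with $v=w+\nabla\phi$ gives
$$\|v\|_{W^{k+1,q}(D)}\le C\left(\|{\rm div}v\|_{W^{k,q}(D)}+\|{\rm curl}v\|_{W^{k,q}(D)}+\|v\|_{L^q(D)}\right),$$
which is the first assertion. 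The hypothesis that $\partial D$ has only finitely many two--dimensional connected components is used just to ensure $\partial D$ is a finite union of closed $C^{k+1,1}$ surfaces, so that the charts and elliptic theory are available and the relevant spaces of harmonic fields are finite--dimensional.

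To remove the term $\|v\|_{L^q(D)}$ when $D$ has no holes, argue by contradiction: if the sharper estimate failed, there would exist $v_j\in W^{k+1,q}(D)$ with $v_j\times n=0$ on $\partial D$, $\|v_j\|_{W^{k+1,q}(D)}=1$, and $\|{\rm div}v_j\|_{W^{k,q}(D)}+\|{\rm curl}v_j\|_{W^{k,q}(D)}\to0$. Since the embedding $W^{k+1,q}(D)\hookrightarrow W^{k,q}(D)$ is compact, after passing to a subsequence $v_j\to v$ in $W^{k,q}(D)$, hence in $L^q(D)$; applying the first assertion to $v_j-v_l$ shows $\{v_j\}$ is Cauchy in $W^{k+1,q}(D)$, so $v_j\to v$ in $W^{k+1,q}(D)$ with $\|v\|_{W^{k+1,q}(D)}=1$, while ${\rm div}v=0$, ${\rm curl}v=0$ in $D$ and $v\times n=0$ on $\partial D$ (the trace being continuous on $W^{k+1,q}$). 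When $D$ has no holes the only such field is $v\equiv0$: writing $v=\nabla\psi$ with $\Delta\psi=0$, the condition $v\times n=0$ forces $\psi$ to be locally constant on $\partial D$, hence constant there, so $\psi$ is constant and $v\equiv0$ --- contradicting $\|v\|_{W^{k+1,q}(D)}=1$. Thus the sharper estimate holds.

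The main obstacle is the elliptic estimate used in the second paragraph: verifying that $(w\times n,{\rm div}w)$ satisfies the complementing condition for $-\Delta$ and upgrading this to an $L^q$ a priori estimate with only a lower--order remainder. This is classical but not elementary; one either performs the Fourier analysis at the flattened boundary directly or quotes the $L^q$ vector--potential theory of \cite{A2014,VW1992}. The scalar Dirichlet reduction and the compactness argument are routine by comparison.
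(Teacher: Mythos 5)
The paper offers no proof of this lemma at all: it is quoted directly from \cite{A2014,VW1992}, so there is no argument in the text to compare yours against. What you propose is a standard route to the cited result: peel off $\nabla\phi$ by a scalar Dirichlet problem (noting $\nabla\phi\times n=0$ because $\phi$ is constant on $\partial D$), estimate the remaining divergence--free part through the boundary operator $w\mapsto(w\times n,\mathrm{div}\,w)$, which complements $-\Delta$, and then remove $\|v\|_{L^q}$ by a compactness--uniqueness argument. Two caveats. For $k=0$ your intermediate bound places $\mathrm{curl}(\mathrm{curl}\,v)$ only in $W^{-1,q}(D)$, and the classical Agmon--Douglis--Nirenberg statement does not directly handle negative--order data; you should either treat the first--order $(\mathrm{div},\mathrm{curl})$ system with $w\times n=0$ directly, or use a transposition argument for right--hand sides of the form $\mathrm{curl}\,g$ with $g\in L^q$. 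Since in the end you propose to quote \cite{A2014,VW1992} for exactly this step, the analytic core of the lemma is still being cited rather than proved --- no worse than what the paper does, but it should be acknowledged as such.

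The genuine gap is in the kernel argument for the second estimate. You pass from $\mathrm{curl}\,v=0$ to $v=\nabla\psi$ with a single--valued $\psi$. In this lemma ``$D$ has no holes'' means that the complement of $D$ has no bounded component (equivalently, $\partial D$ is connected); it does not mean $D$ is simply connected: a solid torus has no holes in this sense yet carries curl--free fields that are not gradients, so the step needs justification. (Conversely, if you read ``no holes'' as simple connectivity, the sharper estimate is false: the spherical shell $\{1<|x|<2\}$ is simply connected, yet $\nabla(1/|x|)$ is a nonzero field with $\mathrm{div}=\mathrm{curl}=0$ and $v\times n=0$.) The repair is to use the correct topological hypothesis: when $b_2(D)=0$, every $1$--cycle in $D$ is homologous to a cycle lying on $\partial D$, and on $\partial D$ the circulation of $v$ vanishes because $v\times n=0$ forces $v$ to be normal there; hence all periods of the closed field $v$ vanish and $v=\nabla\psi$ globally. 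After that, your conclusion ($\psi$ harmonic, locally constant on the connected boundary, hence constant, so $v\equiv0$) and the compactness step are fine.
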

The following conclusion is shown in \cite{A2014,VW1992}.
\begin{lemma}\label{Lm2.6}
	$\Omega$ is the exterior of D, and D is a simply connected domains in $\mathbb{R}^3$ with $C^{1,1}$ boundary. Then for $v\in D^{1,q}(\Omega)$ with $v\cdot n=0$ on $\partial\Omega$, it holds that
	\begin{equation}\label{2.7}
		\|\nabla v\|_{L^q(\Omega)}\le C(\|{\rm div}v\|_{L^q(\Omega)}+\|{\rm curl}v\|_{L^q(\Omega)})\ for\ any\ 1<q<3,
	\end{equation}
and
\begin{equation}\label{2.8}
	\|\nabla v\|_{L^q(\Omega)}\le C(\|{\rm div}v\|_{L^q(\Omega)}+\|{\rm curl}v\|_{L^q(\Omega)}+\|\nabla v\|_{L^2(\Omega)})\ for\ any\ 3\le q<+\infty.
	\end{equation}
\end{lemma}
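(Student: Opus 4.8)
The plan is to prove \eqref{2.7}--\eqref{2.8} by a cut-off argument, splitting $v$ into a piece supported in a bounded neighbourhood of $\partial\Omega$, which is handled by the boundary estimate Lemma~\ref{Lm2.4}, and a piece supported near infinity, which after extension by zero lives on all of $\mathbb{R}^3$ and is controlled by the classical Calder\'on--Zygmund (Biot--Savart) inequality
\[
\|\nabla w\|_{L^q(\mathbb{R}^3)}\le C_q\left(\|{\rm div}\,w\|_{L^q(\mathbb{R}^3)}+\|{\rm curl}\,w\|_{L^q(\mathbb{R}^3)}\right),\qquad 1<q<\infty ,
\]
valid for compactly supported $w$ (since $-\Delta w=-\nabla{\rm div}\,w+\nabla\times{\rm curl}\,w$ gives $\nabla w=\nabla(-\Delta)^{-1}(-\nabla{\rm div}\,w+\nabla\times{\rm curl}\,w)$, and $\nabla(-\Delta)^{-1}\nabla$ is a composition of Riesz transforms). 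Concretely, fix $R_0$ with $\bar D\subset B_{R_0/2}$, and for $R\ge R_0$ choose $\varphi_R\in C_c^\infty(B_{3R/4})$ with $\varphi_R\equiv1$ on $B_{R/2}$, $0\le\varphi_R\le1$ and $|\nabla\varphi_R|\le C/R$; write $v=\varphi_R v+(1-\varphi_R)v$ on $\Omega$.

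For the first piece, $\varphi_R v$ is supported in the bounded domain $\Omega_R:=\Omega\cap B_R=B_R\setminus\bar D$, which (as $D$ is simply connected with smooth boundary) is simply connected with $C^{1,1}$ boundary $\partial\Omega\cup\partial B_R$; moreover $\varphi_R v\cdot n=v\cdot n=0$ on $\partial\Omega$ and $\varphi_R v\equiv0$ near $\partial B_R$, so $\varphi_R v\cdot n=0$ on $\partial\Omega_R$. Applying Lemma~\ref{Lm2.4} with $k=0$ together with ${\rm div}(\varphi_R v)=\varphi_R{\rm div}\,v+\nabla\varphi_R\cdot v$ and ${\rm curl}(\varphi_R v)=\varphi_R{\rm curl}\,v+\nabla\varphi_R\times v$ gives, with $A_R:=(B_{3R/4}\setminus B_{R/2})\cap\Omega$ the (bounded) support of $\nabla\varphi_R$,
\[
\|\nabla(\varphi_R v)\|_{L^q(\Omega)}\le C\left(\|{\rm div}\,v\|_{L^q(\Omega)}+\|{\rm curl}\,v\|_{L^q(\Omega)}+\big\||\nabla\varphi_R|\,|v|\big\|_{L^q(A_R)}\right).
\]
Extending $(1-\varphi_R)v$ by zero to $\mathbb{R}^3$ and using the whole-space inequality above yields the same bound for $\|\nabla((1-\varphi_R)v)\|_{L^q(\mathbb{R}^3)}$; adding, and using $v=\varphi_R v+(1-\varphi_R)v$, we obtain
\[
\|\nabla v\|_{L^q(\Omega)}\le C\left(\|{\rm div}\,v\|_{L^q(\Omega)}+\|{\rm curl}\,v\|_{L^q(\Omega)}\right)+C\big\||\nabla\varphi_R|\,|v|\big\|_{L^q(A_R)} .
\]
Thus everything reduces to handling the lower-order term $\big\||\nabla\varphi_R|\,|v|\big\|_{L^q(A_R)}$, and this is exactly where the dichotomy $q<3$ versus $q\ge3$ enters.

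If $1<q<3$, the Sobolev embedding $D^{1,q}(\Omega)\hookrightarrow L^{q^{*}}(\Omega)$ with $q^{*}=3q/(3-q)$ gives $v\in L^{q^{*}}(\Omega)$; since $\frac1q=\frac13+\frac1{q^{*}}$, H\"older's inequality together with $|\nabla\varphi_R|\le C/R$ and $|A_R|\le CR^3$ yields
\[
\big\||\nabla\varphi_R|\,|v|\big\|_{L^q(A_R)}\le\|\nabla\varphi_R\|_{L^3(A_R)}\|v\|_{L^{q^{*}}(A_R)}\le C\|v\|_{L^{q^{*}}(\Omega\setminus B_{R/2})},
\]
with $C$ independent of $R$. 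As $v\in L^{q^{*}}(\Omega)$, the right-hand side tends to $0$ when $R\to\infty$; letting $R\to\infty$ in the previous display gives \eqref{2.7}. If $3\le q<\infty$ we keep $R=R_0$ fixed and estimate on the bounded set $A_{R_0}$: we may assume $\nabla v\in L^2(\Omega)$ (otherwise \eqref{2.8} is trivial), so by Sobolev $\|v\|_{L^6(\Omega)}\le C\|\nabla v\|_{L^2(\Omega)}$, and a Gagliardo--Nirenberg interpolation on the bounded domain $A_{R_0}$ gives $\|v\|_{L^q(A_{R_0})}\le\frac{1}{2C}\|\nabla v\|_{L^q(A_{R_0})}+C\|v\|_{L^6(A_{R_0})}$ (the first term absent when $3\le q\le6$); inserting $\|v\|_{L^6(A_{R_0})}\le C\|\nabla v\|_{L^2(\Omega)}$ and absorbing the $\|\nabla v\|_{L^q(\Omega)}$--term into the left side yields \eqref{2.8}.

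The main obstacle is precisely the lower-order term $\big\||\nabla\varphi_R|\,|v|\big\|_{L^q(A_R)}$: on an exterior domain there is no inequality controlling $\|v\|_{L^q}$ on a bounded spherical shell by $\|{\rm div}\,v\|_{L^q}+\|{\rm curl}\,v\|_{L^q}$ alone, so one must exploit either the decay of $v$ at infinity --- which the critical Sobolev embedding delivers exactly when $q<3$, making the shell contribution vanish as $R\to\infty$ --- or the a priori control of $\|\nabla v\|_{L^2(\Omega)}$ built into \eqref{2.8} for $q\ge3$. One should also check that $\Omega_R$ is genuinely simply connected, so that Lemma~\ref{Lm2.4} applies with no Neumann--field correction term; this rests on $D$ being simply connected (equivalently $H^1(\Omega)=0$), the topological fact underlying the clean div--curl estimate. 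We refer to \cite{A2014,VW1992} for the detailed arguments.
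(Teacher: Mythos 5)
Your argument for \eqref{2.8} (fixed cutoff, interpolation on a fixed annulus, absorption of the finite $\|\nabla v\|_{L^q}$ term) is essentially sound, but the case $1<q<3$, i.e.\ \eqref{2.7}, has a genuine gap at its crucial step. When you apply Lemma~\ref{Lm2.4} on the truncated domain $\Omega_R=B_R\setminus\bar D$, the constant you get is $C(q,0,\Omega_R)$: it depends on the domain, hence on $R$. Your proof of \eqref{2.7} consists precisely in letting $R\to\infty$ so that the shell term $\||\nabla\varphi_R|\,|v|\|_{L^q(A_R)}$ disappears; but this yields \eqref{2.7} only if the constant multiplying $\|{\rm div}\,v\|_{L^q}+\|{\rm curl}\,v\|_{L^q}$ stays bounded as $R\to\infty$, which you never establish and which Lemma~\ref{Lm2.4} does not provide. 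After rescaling to unit size, uniformity in $R$ amounts to a div--curl estimate on a ball with a shrinking hole, with constant independent of the hole size --- a statement of essentially the same depth as the exterior-domain inequality you are trying to prove, so as written the argument is circular at this point. If instead you freeze the cutoff at $R_0$, you only obtain the estimate with an extra lower-order term $\|v\|_{L^q(A_{R_0})}$, and removing it requires an additional compactness-plus-uniqueness (Liouville type) argument or the potential-theoretic representation used in \cite{VW1992,A2014} (the paper itself gives no proof and simply cites these works). A smaller point of the same kind: the whole-space Calder\'on--Zygmund inequality is stated for compactly supported $w$ but applied to $(1-\varphi_R)v$, which is supported near infinity; extending it to all $w$ with $\nabla w\in L^q(\mathbb{R}^3)$ is routine (density of $C_c^\infty$ modulo constants, or a Liouville argument for $L^q$-integrable harmonic functions), but should be said.

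A second, more subtle issue is your assertion that $D^{1,q}(\Omega)\hookrightarrow L^{q^*}(\Omega)$, hence $v\in L^{q^*}$. On an exterior domain this is false without a normalization at infinity: one only gets $v-v_\infty\in L^{q^*}$ for some constant vector $v_\infty$. Indeed, without a far-field condition the inequality \eqref{2.7} itself fails as literally stated: on the exterior of the unit ball the field $v=\nabla\bigl(x_3+\tfrac{x_3}{2|x|^3}\bigr)$ satisfies ${\rm div}\,v={\rm curl}\,v=0$, $v\cdot n=0$ on $\partial\Omega$, and $\nabla v\in L^q$ for every $q>3/4$, yet $\nabla v\not\equiv0$. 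The intended hypothesis (consistent with \eqref{1.5} and with the explicit decay condition in Lemma~\ref{Lm2.8}) is $v\to0$ as $|x|\to\infty$; with it, your Sobolev steps ($v\in L^{q^*}$ for $q<3$, and $\|v\|_{L^6(\Omega)}\le C\|\nabla v\|_{L^2(\Omega)}$ in the case $q\ge3$) are legitimate, and your topological remark about $B_R\setminus\bar D$ is fine. So the $q\ge3$ half can be accepted, but the $q<3$ half needs either a proof that the constants on $B_R\setminus\bar D$ are uniform in $R$, or a different mechanism (compactness/uniqueness, or the representation-formula proof of \cite{VW1992,A2014}) to eliminate the lower-order term.
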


Due to \cite{LMR2016}, we obtain the following fact.
\begin{lemma}\label{Lm2.7}
	Suppose that $\Omega$ satisfies the conditions in Lemma \ref{Lm2.6}, for any $v\in W^{1,q}(\Omega)~(1<q<+\infty)$ with $v\times n=0$ on $\partial\Omega$, it holds that
	\begin{equation*}
			\|\nabla v\|_{L^q(\Omega)}\le C(\|v\|_{L^q(\Omega)}+\|{\rm div}v\|_{L^q(\Omega)}+\|{\rm curl}v\|_{L^q(\Omega)}).
	\end{equation*}
\end{lemma}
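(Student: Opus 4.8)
The plan is to localize with a cut-off and reduce the exterior-domain estimate to two classical ingredients: the Calder\'on--Zygmund (Riesz transform) bound on the whole space $\mathbb{R}^3$ for the far field, and the bounded-domain div--curl estimate of Lemma~\ref{Lm2.5} --- which is exactly the $v\times n=0$ version --- on a bounded shell surrounding $\partial\Omega$. Fix radii with $\overline{D}\subset B_{R_0}\subset B_{R_1}$ and choose $\chi\in C_c^\infty(\mathbb{R}^3)$ with $0\le\chi\le1$, $\chi\equiv1$ on $\overline{B_{R_0}}$ and $\mathrm{supp}\,\chi\subset B_{R_1}$. Writing $v=\chi v+(1-\chi)v$ on $\Omega$, we have $\nabla v=\nabla(\chi v)+\nabla\bigl((1-\chi)v\bigr)$, so it suffices to estimate each piece in $L^q(\Omega)$.

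\emph{Far field.} Since $\chi\equiv1$ near $\partial\Omega=\partial D$, the field $w:=(1-\chi)v$ vanishes on a neighborhood of $\partial D$, so extending it by zero across $\partial D$ keeps it in $W^{1,q}(\mathbb{R}^3)$ (no trace jump); we still write $w$ for this extension. On the Fourier side the identities
\[
\widehat{w}=\frac{\xi\,(\xi\cdot\widehat{w})-\xi\times(\xi\times\widehat{w})}{|\xi|^2},\qquad \xi\cdot\widehat{w}=-i\,\widehat{\mathrm{div}\,w},\qquad \xi\times\widehat{w}=-i\,\widehat{\mathrm{curl}\,w}
\]
show that each $\widehat{\partial_j w^i}$ is a fixed linear combination of $\widehat{\mathrm{div}\,w}$ and the components of $\widehat{\mathrm{curl}\,w}$ with multipliers of the form $\xi_a\xi_b/|\xi|^2$, which are smooth and homogeneous of degree $0$ off the origin; by the Mikhlin--H\"ormander multiplier theorem,
\[
\|\nabla w\|_{L^q(\mathbb{R}^3)}\le C(q)\bigl(\|\mathrm{div}\,w\|_{L^q(\mathbb{R}^3)}+\|\mathrm{curl}\,w\|_{L^q(\mathbb{R}^3)}\bigr)
\]
(this whole-space bound is classical; see \cite{VW1992,A2014,LMR2016}). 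Expanding $\mathrm{div}\,w=(1-\chi)\mathrm{div}\,v-\nabla\chi\cdot v$ and $\mathrm{curl}\,w=(1-\chi)\mathrm{curl}\,v-\nabla\chi\times v$, the right-hand side is $\le C\bigl(\|\mathrm{div}\,v\|_{L^q(\Omega)}+\|\mathrm{curl}\,v\|_{L^q(\Omega)}+\|v\|_{L^q(\Omega)}\bigr)$ with $C$ depending on $\chi$, hence on $\Omega$.

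\emph{Near field.} On the bounded shell $\Omega_{R_1}:=B_{R_1}\setminus\overline{D}$, whose $C^{1,1}$ boundary $\partial D\cup\partial B_{R_1}$ has two components, the field $\chi v$ satisfies $(\chi v)\times n=\chi\,(v\times n)=0$ on $\partial D$ and $\chi v\equiv0$ near $\partial B_{R_1}$, so $(\chi v)\times n=0$ on all of $\partial\Omega_{R_1}$. Since the shell ``has a hole'', Lemma~\ref{Lm2.5} with $k=0$ applies and gives
\[
\|\chi v\|_{W^{1,q}(\Omega_{R_1})}\le C\bigl(\|\mathrm{div}(\chi v)\|_{L^q(\Omega_{R_1})}+\|\mathrm{curl}(\chi v)\|_{L^q(\Omega_{R_1})}+\|\chi v\|_{L^q(\Omega_{R_1})}\bigr);
\]
expanding the commutators exactly as above bounds the right-hand side by $C\bigl(\|\mathrm{div}\,v\|_{L^q(\Omega)}+\|\mathrm{curl}\,v\|_{L^q(\Omega)}+\|v\|_{L^q(\Omega)}\bigr)$. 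As $\chi v$ is supported in $B_{R_1}$, $\|\nabla(\chi v)\|_{L^q(\Omega)}=\|\nabla(\chi v)\|_{L^q(\Omega_{R_1})}$, so this piece is controlled too; adding the two bounds via the triangle inequality yields the claim.

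The hard part will be the far-field step: one must either quote the global Calder\'on--Zygmund div--curl estimate on $\mathbb{R}^3$ verbatim (as recorded in \cite{LMR2016}) or rigorously justify the Mikhlin computation for a field lying only in $W^{1,q}(\mathbb{R}^3)$ rather than in $C_c^\infty$, which forces one to deal with the degree-zero multipliers' singularity at $\xi=0$ through an approximation argument. Everything else is the routine bookkeeping of the cut-off commutator terms, all of which are absorbed into $\|v\|_{L^q(\Omega)}$.
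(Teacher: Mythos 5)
Your proposal is correct, and it is genuinely different from what the paper does: the paper offers no proof of Lemma \ref{Lm2.7} at all, it simply quotes the result from the weighted $L^p$ theory for vector potential operators in exterior domains of \cite{LMR2016}. Your cut-off argument replaces that machinery with two ingredients the paper already has on hand or that are classical: the whole-space Calder\'on--Zygmund/div-curl bound $\|\nabla w\|_{L^q(\mathbb{R}^3)}\le C(\|{\rm div}\,w\|_{L^q}+\|{\rm curl}\,w\|_{L^q})$ for the far field, and the bounded-domain estimate of Lemma \ref{Lm2.5} (correctly invoked in its weaker form, the one retaining $\|v\|_{L^q}$, so the topology of the shell is irrelevant) for the near field, with the commutator terms $\nabla\chi\cdot v$, $\nabla\chi\times v$ absorbed into $\|v\|_{L^q(\Omega)}$. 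The zero extension of $(1-\chi)v$ across $\partial D$ is legitimate because $1-\chi$ vanishes on the neighborhood $B_{R_0}$ of $\overline{D}$, and the tangential condition $(\chi v)\times n=0$ holds on both components of $\partial(B_{R_1}\setminus\overline{D})$, so Lemma \ref{Lm2.5} applies as you say. The point you flag as the ``hard part'' is in fact routine: the multipliers $\xi_a\xi_b/|\xi|^2$ satisfy the Mikhlin--H\"ormander hypotheses (the origin is harmless), and the estimate extends from $C_c^\infty(\mathbb{R}^3)$ to $W^{1,q}(\mathbb{R}^3)$ by density, since both sides are continuous in the $W^{1,q}$ topology. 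What your route buys is a self-contained, elementary proof with a constant depending only on $q$ and $\Omega$; what the paper's citation buys is brevity and access to the sharper weighted statements of \cite{LMR2016}, which are not needed here.
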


By Lemmas \ref{Lm2.4}--\ref{Lm2.7}, we further have the following result whose proof is in \cite{CLL2021}.
\begin{lemma}\label{Lm2.8}
	Let $\Omega$ is the exterior of D which a simply connected domain in $\mathbb{R}^3$ with smooth boundary. Such that every $v\in\left\{D^{k+1,p(\Omega)}\cap D^{1,2}(\Omega) | v(x,t)\to0\ as\ |x|\to\infty\right\}$ with $v\cdot n|_{\partial\Omega}=0$ or $v\times n|_{\partial\Omega}=0$, then there exists some positive constant C depending only on $p$, $k$ and $D$ satisfies
	\begin{equation}\label{2.9}
		\|\nabla v\|_{W^{k,q}(\Omega)}\le C(\|{\rm div}v\|_{W^{k,q}(\Omega)}+\|{\rm curl}v\|_{W^{k,q}(\Omega)}+\|\nabla v\|_{L^2(\Omega)}),
	\end{equation}
for any $p\in[2,6]$ and integer $k\ge0$.
\end{lemma}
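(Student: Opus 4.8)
The plan is to localize: split $v$ into a piece supported near $\partial\Omega$ inside a bounded domain, where one of the bounded-domain div--curl estimates of Lemmas \ref{Lm2.4}--\ref{Lm2.7} applies, and a far-field piece supported away from $\partial\Omega$, which after extension by zero becomes a Sobolev function on all of $\mathbb{R}^3$ to which the classical Calder\'{o}n--Zygmund (Riesz-transform) estimate applies; the two estimates are then added and the lower-order commutator terms produced by the cutoff are absorbed. Fix $R_0>0$ with $\overline D\subset B_{R_0/2}$ and choose $\varphi\in C_c^\infty(B_{R_0})$ with $0\le\varphi\le1$ and $\varphi\equiv1$ on $B_{R_0/2}$; set $\psi:=1-\varphi$ and write $v=v_1+v_2$ with $v_1:=\varphi v$ and $v_2:=\psi v$. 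Note that $\nabla\varphi$ and $\nabla\psi$ are supported in the fixed bounded annular region $A:=\Omega\cap(B_{R_0}\setminus\overline{B_{R_0/2}})$.

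For the near part, $v_1$ is supported in the bounded smooth domain $\Omega_{R_0}:=\Omega\cap B_{R_0}$ and inherits the boundary condition $v_1\cdot n=0$ (resp. $v_1\times n=0$) on $\partial\Omega$, while $v_1$ vanishes identically near $\partial B_{R_0}$. Hence Lemma \ref{Lm2.4} (resp. Lemma \ref{Lm2.5} or \ref{Lm2.7}) applies on $\Omega_{R_0}$ and gives $\|\nabla v_1\|_{W^{k,q}(\Omega)}\le C(\|\mathrm{div}\,v_1\|_{W^{k,q}}+\|\mathrm{curl}\,v_1\|_{W^{k,q}}+\|v_1\|_{L^q})$. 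Since $\mathrm{div}\,v_1=\varphi\,\mathrm{div}\,v+\nabla\varphi\cdot v$ and $\mathrm{curl}\,v_1=\varphi\,\mathrm{curl}\,v+\nabla\varphi\times v$, the product rule yields
\[
\|\nabla v_1\|_{W^{k,q}(\Omega)}\le C\big(\|\mathrm{div}\,v\|_{W^{k,q}(\Omega)}+\|\mathrm{curl}\,v\|_{W^{k,q}(\Omega)}+\|v\|_{W^{k,q}(A)}\big).
\]
For the far part, extend $v_2$ by zero off $\Omega$ (legitimate since $v_2\equiv0$ near $\partial\Omega$) to $\tilde v_2\in D^{k+1,q}(\mathbb{R}^3)$, which still tends to $0$ at infinity. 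The vector identity $-\Delta\tilde v_2=\nabla\,\mathrm{div}\,\tilde v_2-\nabla\times\mathrm{curl}\,\tilde v_2$ together with the decay forces $\tilde v_2$ to equal the Newtonian potential of its right-hand side, whence $\nabla\tilde v_2$ is recovered from $\mathrm{div}\,\tilde v_2$ and $\mathrm{curl}\,\tilde v_2$ by second-order Riesz transforms; their $L^q$-boundedness for $1<q<\infty$, applied also to derivatives up to order $k$, gives $\|\nabla\tilde v_2\|_{W^{k,q}(\mathbb{R}^3)}\le C(\|\mathrm{div}\,\tilde v_2\|_{W^{k,q}}+\|\mathrm{curl}\,\tilde v_2\|_{W^{k,q}})$. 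Expanding $\mathrm{div}(\psi v)$ and $\mathrm{curl}(\psi v)$ as before and noting that the commutator terms are again supported in $A$, we obtain $\|\nabla v_2\|_{W^{k,q}(\Omega)}\le C(\|\mathrm{div}\,v\|_{W^{k,q}(\Omega)}+\|\mathrm{curl}\,v\|_{W^{k,q}(\Omega)}+\|v\|_{W^{k,q}(A)})$.

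Adding the two bounds and using $\nabla v=\nabla v_1+\nabla v_2$ gives $\|\nabla v\|_{W^{k,q}(\Omega)}\le C(\|\mathrm{div}\,v\|_{W^{k,q}(\Omega)}+\|\mathrm{curl}\,v\|_{W^{k,q}(\Omega)}+\|v\|_{W^{k,q}(A)})$, so it remains to dominate $\|v\|_{W^{k,q}(A)}$ by the right-hand side of \eqref{2.9}. On the fixed bounded domain $A$, interpolating the norms $\|\nabla^j v\|_{L^q(A)}$ ($0\le j\le k$) between $W^{k+1,q}(A)$ and $L^q(A)$ (an Ehrling-type argument, using the compact embedding $W^{k+1,q}(A)\hookrightarrow\hookrightarrow W^{k,q}(A)$) gives, for every $\varepsilon>0$, $\|v\|_{W^{k,q}(A)}\le\varepsilon\|\nabla v\|_{W^{k,q}(\Omega)}+C_\varepsilon\|v\|_{L^q(A)}$; and since $q\le6$ and $v\in D^{1,2}(\Omega)$ with $v\to0$ at infinity, the exterior Sobolev inequality (cf. \eqref{2.1}) gives $\|v\|_{L^q(A)}\le C\|v\|_{L^6(\Omega)}\le C\|\nabla v\|_{L^2(\Omega)}$. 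Choosing $\varepsilon$ small and absorbing $\varepsilon\|\nabla v\|_{W^{k,q}(\Omega)}$ into the left-hand side yields \eqref{2.9}. The main obstacle is precisely this last absorption step: one must show that the compactly supported, seemingly top-order term $\|v\|_{W^{k,q}(A)}$ arising from the cutoff commutators can be reduced — via the compact embedding on the bounded annulus $A$ together with the exterior-domain Sobolev inequality — to a small multiple of $\|\nabla v\|_{W^{k,q}(\Omega)}$ plus a controlled multiple of $\|\nabla v\|_{L^2(\Omega)}$; this is also the reason the norm $\|\nabla v\|_{L^2(\Omega)}$ must appear on the right of \eqref{2.9}, in contrast to the bounded no-holes case of Lemmas \ref{Lm2.4}--\ref{Lm2.5}.
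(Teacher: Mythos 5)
Your localization argument --- cutoff near $\partial\Omega$ handled by the bounded-domain div--curl lemmas, far field extended by zero and treated by the whole-space Calder\'on--Zygmund (Riesz transform) estimate, with the cutoff errors reduced to $\|\nabla v\|_{L^2}$ via $\|v\|_{L^q(\Omega\cap B_{R_0})}\le C\|v\|_{L^6(\Omega)}\le C\|\nabla v\|_{L^2(\Omega)}$ (this is exactly where $q\le 6$ enters) --- is correct and is essentially the proof of this lemma given in the cited reference \cite{CLL2021}. Only cosmetic points: the vector identity should read $-\Delta w=\nabla\times{\rm curl}\,w-\nabla\,{\rm div}\,w$ (your signs are flipped, harmlessly); the $L^q$ term produced by Lemma \ref{Lm2.5} lives on all of $\Omega\cap B_{R_0}$ rather than only the annulus $A$, but it is controlled by the same Sobolev bound; and in place of the Ehrling absorption step (which tacitly assumes $\|\nabla v\|_{W^{k,q}(\Omega)}<\infty$ a priori) one can induct on $k$, bounding $\|v\|_{W^{k,q}(A)}\le C\bigl(\|v\|_{L^q(A)}+\|\nabla v\|_{W^{k-1,q}(\Omega)}\bigr)$ and invoking the case $k-1$.
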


Then we give the following Beale--Kato--Majda type inequality with respect to the slip boundary condition \eqref{1.4} which was first proved in \cite{BKM1984,K1986} when ${\rm div}u\equiv0$, it can estimate $\|\nabla u\|_{L^\infty}$.

\begin{lemma}[\cite{CLL2021}]\label{Lm2.9}
Assume that $u\cdot n=0$, ${\rm curl}u\times n=0$, $\nabla u\in W^{1,q}$, for $3<q<\infty$, then there exists a constant $C=C(q)$ such that the following estimate holds
\begin{equation}\label{2.10}
	\|\nabla u\|_{L^\infty}\le C(\|{\rm div}u\|_{L^\infty}+\|{\rm curl}u\|_{L^\infty})\ln(e+\|\nabla^2u\|_{L^q})+C\|\nabla u\|_{L^2}+C.
	\end{equation}
\end{lemma}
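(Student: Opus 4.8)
The plan is to treat \eqref{2.10} as a pure harmonic-analysis statement about vector fields obeying $u\cdot n=0$ and ${\rm curl}\,u\times n=0$ (it does not see the equations \eqref{1.1}), and to prove it by combining a Helmholtz--Weyl decomposition of $u$ adapted to \eqref{1.4} with a logarithmic Calder\'on--Zygmund / $BMO$ interpolation inequality, using the div--curl estimates of Lemmas \ref{Lm2.6}--\ref{Lm2.8} only to absorb the $L^2$ remainders. \textbf{Step 1 (reduction to singular integrals).} Solve $\Delta\varphi={\rm div}\,u$ in $\Omega$ with $\partial_n\varphi=u\cdot n=0$ on $\partial\Omega$ and $\varphi\to0$ at infinity, and solve for a vector potential $\Psi$ with $-\Delta\Psi={\rm curl}\,u$, ${\rm div}\,\Psi=0$, boundary conditions compatible with ${\rm curl}\,u\times n=0$, and $\Psi\to0$ at infinity; on an exterior domain with smooth boundary these are uniquely solvable in the relevant spaces. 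Then $u=\nabla\varphi+{\rm curl}\,\Psi$, so each entry of $\nabla u$ equals an entry of $\nabla^2\varphi$ plus an entry of $\nabla{\rm curl}\,\Psi$, i.e.\ (modulo lower-order and boundary contributions) a second-order Riesz-type operator applied to ${\rm div}\,u$ or to ${\rm curl}\,u$.

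\textbf{Step 2 (logarithmic estimate).} Apply the Kozono--Taniuchi type inequality: for $w$ with $\nabla w\in W^{1,q}$, $q>3$,
\begin{equation*}
\|w\|_{L^\infty}\le C\|w\|_{BMO}\ln\big(e+\|w\|_{W^{1,q}}\big)+C\|w\|_{L^2}+C,
\end{equation*}
to each component of $\nabla^2\varphi$ and of $\nabla{\rm curl}\,\Psi$. Since the Calder\'on--Zygmund operators built from the Green's functions of the two elliptic problems above are bounded from $L^\infty$ into $BMO$, one has $\|\nabla^2\varphi\|_{BMO}\le C\|{\rm div}\,u\|_{L^\infty}$ and $\|\nabla{\rm curl}\,\Psi\|_{BMO}\le C\|{\rm curl}\,u\|_{L^\infty}$ (plus harmless lower-order terms); the $L^q$- and $L^2$-theory of the same operators together with Lemma \ref{Lm2.8} gives $\|\nabla^3\varphi\|_{L^q}+\|\nabla^2{\rm curl}\,\Psi\|_{L^q}\le C(\|\nabla^2u\|_{L^q}+\|\nabla u\|_{L^2})$ and $\|\nabla^2\varphi\|_{L^2}+\|\nabla{\rm curl}\,\Psi\|_{L^2}\le C\|\nabla u\|_{L^2}$. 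Feeding these in, and absorbing the logarithms of the lower-order norms (via $\ln(e+ab)\le\ln(e+a)+\ln(e+b)$, the interpolation inequalities of Section \ref{S2}, and Young's inequality against $C\|\nabla u\|_{L^2}$) produces exactly \eqref{2.10}.

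\textbf{Step 3 (boundary and unbounded domain).} Steps 1--2 are transparent on $\mathbb{R}^3$; on $\Omega$ the conditions $u\cdot n=0$ and ${\rm curl}\,u\times n=0$ are precisely what makes $\nabla\varphi$, ${\rm curl}\,\Psi$ genuine potential/solenoidal pieces and, after locally flattening $\partial\Omega$, lets one extend $u$ across the boundary by the natural even/odd (Navier-type) reflection into a whole-space field of the same local regularity (the vanishing of the relevant normal derivative of the tangential components of $u$ on $\partial\Omega$, which follows from $u\cdot n=0$ together with ${\rm curl}\,u\times n=0$, is what guarantees that the reflected field lies in the right space). A finite partition of unity subordinate to a cover of $\bar\Omega$ by one bounded collar of $\partial\Omega$ and by interior balls reduces the global estimate to the whole-space case, the flattening contributing only lower-order terms absorbed into $C\|\nabla u\|_{L^2}+C$; the non-compactness of $\Omega$ contributes only the far-field tails of the singular integrals, again controlled by $\|\nabla u\|_{L^2}$ through the decay of $\varphi,\Psi$.

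The step I expect to be the main obstacle is the \emph{boundary version} of Step 2: establishing the $L^\infty\to BMO$ boundedness of the relevant singular integral operators \emph{on $\Omega$ subject to the slip conditions} \eqref{1.4}, and the logarithmic interpolation inequality up to $\partial\Omega$, rather than merely in $\mathbb{R}^3$. This is where the smoothness of $\partial\Omega$, the boundedness of $\mathbb{R}^3\setminus\Omega$, and the algebraic structure of \eqref{1.4} (already reflected in Lemmas \ref{Lm2.4}--\ref{Lm2.8}) are essential; granting these, the remainder is the standard Beale--Kato--Majda bookkeeping.
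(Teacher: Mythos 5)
The paper itself contains no proof of Lemma \ref{Lm2.9}: it is quoted verbatim from \cite{CLL2021} (going back to the Beale--Kato--Majda and Kato arguments), so the only meaningful comparison is with that source. Your plan is essentially the same route taken there: split $u$ through the two elliptic problems driven by ${\rm div}\,u$ (Neumann potential, using $u\cdot n=0$) and ${\rm curl}\,u$ (vector potential, using ${\rm curl}\,u\times n=0$), and obtain the logarithm from a Kozono--Taniuchi/$BMO$ interpolation applied to the resulting Calder\'on--Zygmund operators, with the div--curl lemmas only absorbing $L^2$ remainders. Two small points you glossed over do work out for this geometry and should be said explicitly: the Helmholtz--Weyl splitting has no harmonic remainder because the exterior of a simply connected bounded domain in $\mathbb{R}^3$ is simply connected (curl-free plus $h\cdot n=0$ plus decay forces $h\equiv0$), and the constant your argument produces depends on $\Omega$ as well as $q$ (consistent with how \eqref{2.10} is used in Section \ref{S4}, even though the statement writes $C=C(q)$).

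The place where your write-up is genuinely thinner than a proof is exactly the part you flag: the $L^\infty\to BMO$ boundedness of the second-derivative Green operators for the Neumann and vector-potential problems on an exterior domain, and the logarithmic inequality up to $\partial\Omega$, are asserted rather than established, and in the literature this is where all the work lies (kernel estimates for the domain Green functions, or, as in your Step 3, a localization--flattening--reflection reduction to $\mathbb{R}^3$). Your reflection remark is sound at the level needed here: on a flattened boundary, $u\cdot n=0$ kills the tangential derivatives of the normal component, and ${\rm curl}\,u\times n=0$ kills the normal derivatives of the tangential components, so the even/odd extension has continuous first derivatives and hence $\nabla u\in W^{1,q}$ locally across the boundary, with the flattening errors of lower order. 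If you carry that reduction through carefully (including the homogeneous elliptic estimates at infinity, so that $\|\nabla^3\varphi\|_{L^q}+\|\nabla^2{\rm curl}\,\Psi\|_{L^q}$ is really controlled by $\|\nabla^2u\|_{L^q}$ plus $\|\nabla u\|_{L^2}$), you recover \eqref{2.10}; as it stands, the proposal is a correct blueprint of the cited proof rather than a self-contained argument.
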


Finally, we give the following conclusions for F and curl$u$, whose proof is in \cite{CLL2021}. We sketch it here for completeness.
\begin{lemma}
Assume an exterior domain of some simply connected bounded domain $\Omega\subset\mathbb{R}^3$ and its boundary is smooth. For any $2\le p\le6$ and $q\in(1,\infty)$, suppose that $(\rho,m,u)$ is a smooth solution of \eqref{1.1} with the boundary condition \eqref{1.4}, then there exists a positive constant C depending only on p, q, $\lambda$,\ $\mu$ and\ $\Omega$ such that
\begin{equation}\label{2.11}
	\|\nabla F\|_{L^q}\le C\|(\rho+m)\dot{u}\|_{L^q},
\end{equation}
\begin{equation}\label{2.12}
	\|\nabla{\rm curl}u\|_{L^p}\le C(\|(\rho+m)\dot{u}\|_{L^p}+\|(\rho+m)\dot{u}\|_{L^2}+\|\nabla u\|_{L^2}),
\end{equation}
\begin{equation}\label{2.13}
	\|F\|_{L^p}\le C\|(\rho+m)\dot{u}\|_{L^2}^{\frac{3p-6}{2p}}(\|\nabla u\|_{L^2}+\|P-P_{\infty}\|_{L^2})^{\frac{6-p}{2p}}.
\end{equation}
Moreover,
\begin{equation}\label{2.14}
	\|{\rm curl}u\|_{L^p}\le C\|(\rho+m)\dot{u}\|_{L^2}^{\frac{3p-6}{2p}}\|\nabla u\|_{L^2}^{\frac{6-p}{2p}}+C\|\nabla u\|_{L^2},
\end{equation}
\begin{equation}\label{2.15}
	\|\nabla u\|_{L^p}\le C(\|(\rho+m)\dot{u}\|_{L^2}+\|P-P_\infty\|_{L^6})^{\frac{3p-6}{2p}}\|\nabla u\|_{L^2}^{\frac{6-p}{2p}}+C\|\nabla u\|_{L^2}.
\end{equation}
\end{lemma}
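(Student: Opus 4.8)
The strategy is to read the reformulated momentum equation \eqref{1.20}, namely $(\rho+m)\dot u=\nabla F-\mu\nabla\times\mathrm{curl}\,u$, as a Helmholtz--Weyl decomposition of $(\rho+m)\dot u$, and then to deduce all five bounds from the $L^q$-theory of this decomposition together with Lemmas \ref{Lm2.6} and \ref{Lm2.8} and the Gagliardo--Nirenberg inequality \eqref{2.1}.

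\emph{Estimates \eqref{2.11} and \eqref{2.12}.} Taking $\mathrm{div}$ and $\mathrm{curl}$ of \eqref{1.20} and using $\mathrm{div}(\nabla\times\mathrm{curl}\,u)=0$, $\mathrm{curl}\,\nabla F=0$ and $\mathrm{div}\,\mathrm{curl}\,u=0$, one finds
\[
\Delta F=\mathrm{div}\big((\rho+m)\dot u\big),\qquad \mu\,\Delta\,\mathrm{curl}\,u=\mathrm{curl}\big((\rho+m)\dot u\big)\quad\text{in }\Omega .
\]
Since $\mathrm{curl}\,u\times n=0$ on $\partial\Omega$ forces $\mathrm{curl}\,u$ to be normal there, we get $n\times\mathrm{curl}\,u=0$ and hence $(\nabla\times\mathrm{curl}\,u)\cdot n=0$ on $\partial\Omega$; consequently $\nabla F$ and $\nabla\times\mathrm{curl}\,u$ are mutually $L^2$-orthogonal and constitute the (topologically trivial) Helmholtz decomposition of $(\rho+m)\dot u$, whose associated projections are bounded on every $L^q$, $1<q<\infty$. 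This gives $\|\nabla F\|_{L^q}\le C\|(\rho+m)\dot u\|_{L^q}$, i.e.\ \eqref{2.11}, and, via $\mu\nabla\times\mathrm{curl}\,u=\nabla F-(\rho+m)\dot u$, also $\|\nabla\times\mathrm{curl}\,u\|_{L^p}\le C\|(\rho+m)\dot u\|_{L^p}$. To pass from $\nabla\times\mathrm{curl}\,u$ to $\nabla\,\mathrm{curl}\,u$, apply Lemma \ref{Lm2.8} with $v=\mathrm{curl}\,u$ (legitimate since $\mathrm{div}\,\mathrm{curl}\,u=0$ and $\mathrm{curl}\,u\times n=0$): $\|\nabla\,\mathrm{curl}\,u\|_{L^p}\le C(\|\nabla\times\mathrm{curl}\,u\|_{L^p}+\|\nabla\,\mathrm{curl}\,u\|_{L^2})$. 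The $L^2$ term must be handled separately: multiplying $\mu\,\Delta\,\mathrm{curl}\,u=\mathrm{curl}((\rho+m)\dot u)$ by $\mathrm{curl}\,u$, integrating by parts, noting that the term $\int_{\partial\Omega}\big((\rho+m)\dot u\times\mathrm{curl}\,u\big)\cdot n$ vanishes (again because $\mathrm{curl}\,u$ is normal on $\partial\Omega$) and that the remaining boundary integral $\mu\int_{\partial\Omega}\partial_n\mathrm{curl}\,u\cdot\mathrm{curl}\,u$ is controlled through the second fundamental form of $\partial\Omega$ (using $\mathrm{div}\,\mathrm{curl}\,u=0$ and \eqref{1.21}), one obtains $\|\nabla\,\mathrm{curl}\,u\|_{L^2}\le C(\|(\rho+m)\dot u\|_{L^2}+\|\nabla u\|_{L^2})$. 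Combining the last three facts yields \eqref{2.12}.

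\emph{Estimates \eqref{2.13}--\eqref{2.15}.} Since $F\to0$ and $\mathrm{curl}\,u\to0$ as $|x|\to\infty$, the Gagliardo--Nirenberg inequality \eqref{2.1} gives, for $p\in[2,6]$,
\[
\|F\|_{L^p}\le C\|F\|_{L^2}^{\frac{6-p}{2p}}\|\nabla F\|_{L^2}^{\frac{3p-6}{2p}},\qquad \|\mathrm{curl}\,u\|_{L^p}\le C\|\mathrm{curl}\,u\|_{L^2}^{\frac{6-p}{2p}}\|\nabla\,\mathrm{curl}\,u\|_{L^2}^{\frac{3p-6}{2p}}.
\]
From the definition \eqref{1.19} one has $\|F\|_{L^2}\le C(\|\nabla u\|_{L^2}+\|P-P_\infty\|_{L^2})$, which combined with \eqref{2.11} at $q=2$ gives \eqref{2.13}; likewise $\|\mathrm{curl}\,u\|_{L^2}\le C\|\nabla u\|_{L^2}$ together with the $L^2$-bound on $\|\nabla\,\mathrm{curl}\,u\|$ above gives \eqref{2.14}. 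Finally, applying Lemma \ref{Lm2.6} (or Lemma \ref{Lm2.8}) to $u$ gives $\|\nabla u\|_{L^p}\le C(\|\mathrm{div}\,u\|_{L^p}+\|\mathrm{curl}\,u\|_{L^p}+\|\nabla u\|_{L^2})$; writing $\mathrm{div}\,u=(F+P-P_\infty)/(\lambda+2\mu)$ and invoking \eqref{2.13}, \eqref{2.14} together with a routine Gagliardo--Nirenberg interpolation of $\|P-P_\infty\|_{L^p}$ between $L^2$ and $L^6$, and collecting the terms, yields \eqref{2.15}.

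\emph{Main obstacle.} Everything reduces to elliptic regularity and interpolation once one has the decomposition \eqref{1.20}; the genuinely delicate point is the treatment of the boundary integrals — identifying the tangential/normal behaviour of $F$ and $\mathrm{curl}\,u$ on $\partial\Omega$ and controlling the boundary terms in the $L^2$ energy estimate for $\mathrm{curl}\,u$ — where one exploits the slip conditions \eqref{1.4} and the identity \eqref{1.21} to convert boundary contributions into curvature terms of $\partial\Omega$ that can be absorbed.
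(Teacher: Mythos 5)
Your overall strategy --- reading \eqref{1.20} as a Helmholtz--Weyl decomposition, obtaining $\|\nabla F\|_{L^q}\le C\|(\rho+m)\dot u\|_{L^q}$ from the $L^q$ theory of the associated weak Neumann problem, and then combining the div--curl lemmas with Gagliardo--Nirenberg --- is essentially the paper's. However, two steps do not go through as written. The more serious one is your derivation of \eqref{2.15}: you apply the div--curl estimate at the exponent $p$ and then interpolate $F$, ${\rm curl}\,u$ and $P-P_\infty$ separately. By \eqref{2.13}, and by the interpolation of $\|P-P_\infty\|_{L^p}$ between $L^2$ and $L^6$, this leaves terms carrying the factor $\|P-P_\infty\|_{L^2}^{\frac{6-p}{2p}}$ (multiplying $\|(\rho+m)\dot u\|_{L^2}^{\frac{3p-6}{2p}}$ and $\|P-P_\infty\|_{L^6}^{\frac{3p-6}{2p}}$ respectively), and such terms are not dominated by the right-hand side of \eqref{2.15}, which has $\|\nabla u\|_{L^2}^{\frac{6-p}{2p}}$ in that slot and no $L^2$ norm of the pressure at all (think of $\|\nabla u\|_{L^2}$ and $\|P-P_\infty\|_{L^6}$ small while $\|P-P_\infty\|_{L^2}$ is large, which is possible in an exterior domain). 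The paper avoids this by interpolating $\nabla u$ itself first, $\|\nabla u\|_{L^p}\le C\|\nabla u\|_{L^2}^{\frac{6-p}{2p}}\|\nabla u\|_{L^6}^{\frac{3p-6}{2p}}$, and only then using the div--curl bound and \eqref{2.13}, \eqref{2.14} at the $L^6$ level, where the troublesome exponent $\frac{6-p}{2p}$ equals zero; you need this reordering to get \eqref{2.15} in the stated form.

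Second, the bound $\|\nabla{\rm curl}\,u\|_{L^2}\le C(\|(\rho+m)\dot u\|_{L^2}+\|\nabla u\|_{L^2})$ is only asserted in your proposal: the boundary integral $\mu\int_{\partial\Omega}\partial_n{\rm curl}\,u\cdot{\rm curl}\,u\,ds$ is claimed to be ``controlled through the second fundamental form'' with no computation, and the identity \eqref{1.21} you invoke concerns $u\cdot\nabla u\cdot n$ on the boundary and plays no role in that term. In fact no energy estimate is needed: since ${\rm div}\,{\rm curl}\,u=0$, $\mu\,{\rm curl}\,{\rm curl}\,u=\nabla F-(\rho+m)\dot u$ and ${\rm curl}\,u\times n=0$ on $\partial\Omega$, Lemma \ref{Lm2.7} applied to $v={\rm curl}\,u$ (this is exactly \eqref{2.18} in the paper, valid for every $1<q<\infty$, in particular $q=2$) gives $\|\nabla{\rm curl}\,u\|_{L^2}\le C(\|(\rho+m)\dot u\|_{L^2}+\|{\rm curl}\,u\|_{L^2})$ together with $\|{\rm curl}\,u\|_{L^2}\le C\|\nabla u\|_{L^2}$; this also replaces your detour through Lemma \ref{Lm2.8}. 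The remaining parts of your argument (\eqref{2.11}, \eqref{2.13}, \eqref{2.14}) coincide with the paper's proof.
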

\begin{proof}First, due to $\eqref{1.1}_3$, it is easy to find that $F$ satisfies
\begin{equation*}
	\int\nabla F\cdot\nabla\eta dx=\int(\rho+m)\dot{u}\cdot\nabla\eta dx,\ \ \forall\eta\in C_0^\infty(\mathbb{R}^3)
\end{equation*}
i.e.,
\begin{align*}
\begin{cases}
		-\triangle F={\rm div}((\rho+m)\dot{u}),\ &\text{ in } \Omega, \\
		\frac{\partial F}{\partial n}=-((\rho+m)\dot{u})\cdot n,\ &\text{ on } \partial\Omega, \\
		\nabla F\to0,\ &\text{ as }\ |x|\to\infty.
\end{cases}
\end{align*}
It follows from \cite{NS2004} that for some $q\in(1,\infty)$,
\begin{equation}\label{2.16}
	\|\nabla F\|_{L^q}\le C\|(\rho+m)\dot{u}\|_{L^q},
\end{equation}
and
\begin{equation}\label{2.17}
	\|\nabla F\|_{W^{k,q}}\le C\|(\rho+m)\dot{u}\|_{W^{k,q}},
\end{equation}
for any integer $k\ge1$.

Due to \eqref{1.20} and \eqref{1.4}, from Lemma \ref{Lm2.7} and \eqref{2.16}, we get
\begin{equation}\label{2.18}
	\begin{aligned}
		\|\nabla{\rm curl}u\|_{L^q}&\le C\|{\rm curl}u\|_{L^q}+\|{\rm div}{\rm curl}u\|_{L^q}+\|\nabla\times{\rm curl}u\|_{L^q}\\
		&\le C(\|{\rm curl}u\|_{L^q}+\|(\rho+m)\dot{u}\|_{L^q}+\|\nabla F\|_{L^q})\\
		&\le C(\|(\rho+m)\dot{u}\|_{L^q}+\|{\rm curl}u\|_{L^q}).
	\end{aligned}
\end{equation}

By virtue of Lemma \ref{Lm2.8}, \eqref{1.20}, \eqref{2.17} and \eqref{2.18}, it indicates that
\begin{equation}\label{2.19}
	\begin{aligned}
		\|\nabla{\rm curl}u\|_{W^{k,q}}&\le C\|{\rm div}{\rm curl}u\|_{W^{k,q}}+\|{\rm curl}{\rm curl}u\|_{W^{k,q}}+\|\nabla{\rm curl}u\|_{L^2}\\
		&\le C(\|\nabla F\|_{W^{k,q}}+\|(\rho+m)\dot{u}\|_{W^{k,q}}+\|\rm{curl u}\|_{L^2}+\|(\rho+m)\dot{u}\|_{L^2})\\
		&\le C(\|(\rho+m)\dot{u}\|_{W^{k,q}}+\|(\rho+m)\dot{u}\|_{L^{2}}+\|\nabla u\|_{L^2}).
	\end{aligned}
\end{equation}
By \eqref{2.1} and \eqref{2.18}, we can obtain
\begin{equation}\label{2.20}
	\begin{aligned}
		\|\nabla{\rm curl}u\|_{L^p}&\le C(\|(\rho+m)\dot{u}\|_{L^p}+\|{\rm curl}u\|_{L^p})\\
		&\le C(\|(\rho+m)\dot{u}\|_{L^p}+\|\rm{curl}u\|_{L^2}+\|\nabla{\rm curl}u\|_{L^2})\\
		&\le C(\|(\rho+m)\dot{u}\|_{L^p}+\|(\rho+m)\dot{u}\|_{L^2}+\|\nabla u\|_{L^2}),
	\end{aligned}
\end{equation}
for any $2\le p\le6$.

Employing \eqref{1.19}, \eqref{2.1}, \eqref{2.16} and \eqref{2.20}, one has
\begin{align*}
	\|F\|_{L^p}\le&C(\|F\|_{L^2}^{\frac{6-p}{2p}}\|\nabla F\|_{L^2}^{\frac{3p-6}{2p}})\\
	\le&C(\|\nabla u\|_{L^2}+\|P-P_\infty\|_{L^2})^{\frac{6-p}{2p}}\|(\rho+m)\dot{u}\|_{L^2}^{\frac{3p-6}{2p}},
\end{align*}
and
\begin{equation}\label{2.21}
	\begin{aligned}
	\|{\rm curl}u\|_{L^p}\le&C(\|{\rm curl}u\|_{L^2}^{\frac{6-p}{2p}}\|\nabla{\rm curl}u\|_{L^2}^{\frac{3p-6}{2p}})\\
	\le&C\|\nabla u\|_{L^2}^{\frac{6-p}{2p}}(\|(\rho+m)\dot{u}\|_{L^2}+\|\nabla u\|_{L^2})^{\frac{3p-6}{2p}}\\
	\le&C\|\nabla u\|_{L^2}^{\frac{6-p}{2p}}\|(\rho+m)\dot{u}\|_{L^2}^{\frac{3p-6}{2p}}+C\|\nabla u\|_{L^2}.	
	\end{aligned}
\end{equation}
Combining Lemma \ref{Lm2.6} with \eqref{2.1}, \eqref{2.16} and \eqref{2.21} gives that
\begin{equation*}
	\begin{aligned}
		\|\nabla u\|_{L^p}\le&C\|\nabla u\|_{L^2}^{\frac{6-p}{2p}}\|\nabla u\|_{L^6}^{\frac{3p-6}{2p}}\\
		\le&C\|\nabla u\|_{L^2}^{\frac{6-p}{2p}}(\|{\rm div}u\|_{L^6}+\|{\rm curl}u\|_{L^6}+\|\nabla u\|_{L^2})^{\frac{3p-6}{2p}}\\
		\le&C\|\nabla u\|_{L^2}^{\frac{6-p}{2p}}(\|F\|_{L^6}+\|P-P_\infty\|_{L^6}+\|(\rho+m)\dot{u}\|_{L^2}+\|\nabla u\|_{L^2})^{\frac{3p-6}{2p}}\\
		\le&C\|\nabla u\|_{L^2}^{\frac{6-p}{2p}}(\|(\rho+m)\dot{u}\|_{L^2}+\|P-P_\infty\|_{L^6}+\|\nabla u\|_{L^2})^{\frac{3p-6}{2p}}\\
		\le&C\|\nabla u\|_{L^2}^{\frac{6-p}{2p}}(\|(\rho+m)\dot{u}\|_{L^2}+\|P-P_\infty\|_{L^6})^{\frac{3p-6}{2p}}+C\|\nabla u\|_{L^2}.
	\end{aligned}	
\end{equation*}
\end{proof}
\section{A priori estimates~(\uppercase\expandafter{\romannumeral1}): lower order estimates}\label{S3}
Assume the exterior of a simply connected domain $\Omega\subset\mathbb{R}^3$, we can choose a positive real number $R$ such that $\bar{D}\subset B_R$, one can extend the unit outer normal $n$ to $\Omega$ as
\begin{equation}\label{3.1}
	n\in C^3(\bar{\Omega}),\ \ n\equiv0\ on\ \mathbb{R}^3\backslash B_{2R}.
\end{equation}

Then we will establish some necessary a priori bounds for smooth solutions to the problem \eqref{1.1}--\eqref{1.5} to extend the local classical solution guaranteed by Lemma \ref{Lm2.1}. Thus, let $T>0$ be a fixed time and $(\rho,m,u)$ be the smooth solution to \eqref{1.1}--\eqref{1.5} on $\Omega\times(0,T]$ with smooth initial date $(\rho_0,m_0,u_0)$ satisfying \eqref{1.9} and \eqref{1.10}. In order to estimate this solution, set $\sigma(t)\triangleq {\rm min}\left\{1,t\right\}$, and we define
\begin{equation}\label{3.2}
	A_1(T)\triangleq\mathop{\rm sup}\limits_{0\le t\le T}(\sigma\|\nabla u\|^2_{L^2})+\int_{0}^{T}\int\sigma(\rho+m)|\dot{u}|^2dxdt,
	\end{equation}
\begin{equation}\label{3.3}
	A_2(T)\triangleq\mathop{\rm sup}\limits_{0\le t\le T}\sigma^3\int(\rho+m)|\dot{u}|^2dx+\int_{0}^{T}\int\sigma^3|\nabla\dot{u}|^2dxdt,
\end{equation}
and
\begin{equation}\label{3.4}
	A_3(T)\triangleq\mathop{\rm sup}\limits_{0\le t\le T}\int|\nabla u|^2dx.
\end{equation}

Then, to get the existence of a global classical solution of \eqref{1.1}--\eqref{1.5}, we will give the useful discuss.
\begin{proposition}\label{Prop3.1}
Under the conditions of Theorem \ref{1.1}, there exists a positive constant $\varepsilon$ depending only on $\lambda$, $\mu$, $\gamma$, $\alpha$, $\Omega$, $\bar{\rho}$, $\bar{m}$ and M such that if $(\rho,m,u)$ is a smooth solution of \eqref{1.1}--\eqref{1.5} on $\Omega\times(0,T]$ satisfying
\begin{equation}\label{3.5}
	\begin{aligned}
	\mathop{\rm sup}\limits_{\Omega\times[0,T]}\rho\le2\bar{\rho},\ \mathop{\rm sup}_{\Omega\times[0,T]}m\le2\bar{m},\ A_1(T)+A_2(T)\le2C_0^{\frac{1}{2}},\ A_3(\sigma(T))\le2M,
	\end{aligned}
\end{equation}
then
\begin{equation}\label{3.6}
	\mathop{\rm sup}\limits_{\Omega\times[0,T]}\rho\le7\bar{\rho}/4,\ \mathop{\rm sup}\limits_{\Omega\times[0,T]}m\le7\bar{m}/4,\ A_1(T)+A_2(T)\le C_0^{\frac{1}{2}},\ A_3(\sigma(T))\le M,
\end{equation}
provided $C_0\le\varepsilon$.
\end{proposition}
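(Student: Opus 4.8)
The plan is to run the standard continuity-argument bootstrap: assume the a priori bounds \eqref{3.5} and upgrade every constant, so that a continuation argument closes the estimates on all of $[0,T]$. Concretely I would proceed in the order dictated by how the quantities feed into one another.

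\textbf{Step 1: Basic energy estimate.} First I would multiply $\eqref{1.1}_3$ by $u$, integrate over $\Omega$, use the two continuity equations $\eqref{1.1}_1$, $\eqref{1.1}_2$ together with the structure of $G(\rho,m)$, and exploit the non-penetration boundary condition $u\cdot n=0$ plus ${\rm curl}u\times n=0$ to kill the boundary terms (this is where \eqref{1.21} is used). This yields
\[
\sup_{0\le t\le T}\int\Big(\tfrac12(\rho+m)|u|^2+G(\rho,m)\Big)dx+\int_0^T\int\big(\mu|{\rm curl}u|^2+(\lambda+2\mu)|{\rm div}u|^2\big)dxdt\le C_0,
\]
and via Lemma \ref{Lm2.6} this also controls $\int_0^T\|\nabla u\|_{L^2}^2\,dt\le CC_0$. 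Since $C_0\le\varepsilon$ is small, these are already much smaller than the crude bound.

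\textbf{Step 2: The $\sigma$-weighted first-order estimate ($A_1$).} Next I would multiply \eqref{1.20} by $\sigma\dot u$ and integrate. The left side produces $\tfrac{d}{dt}\big(\tfrac{\sigma}{2}\int((\lambda+2\mu)|{\rm div}u|^2+\mu|{\rm curl}u|^2)\big)$-type terms after integrating by parts and using the boundary conditions to discard boundary integrals, plus the good term $\int\sigma(\rho+m)|\dot u|^2$. The right side requires controlling terms like $\int\sigma P_t\,{\rm div}u$; here one uses the transport equations to write $P_t=-u\cdot\nabla P-(\gamma\rho^\gamma+\alpha m^\alpha){\rm div}u$ (the two-component pressure makes this a touch more delicate than the single-phase case, but the bound $\rho,m\le 2\bar\rho,2\bar m$ makes $P$, $P_\rho$, $P_m$ all bounded). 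Combining with Step 1, Lemma 2.12's estimates \eqref{2.13}--\eqref{2.15}, Gagliardo–Nirenberg \eqref{2.1}, and Young's inequality, one gets $A_1(T)\le C C_0^{1/2}+C C_0^{3/2}+\cdots$, which is $\le \tfrac12 C_0^{1/2}$ for $\varepsilon$ small.

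\textbf{Step 3: The higher $\sigma$-weighted estimate ($A_2$) and then $A_3$.} I would then apply the material-derivative operator $\partial_t+{\rm div}(u\cdot)$ to \eqref{1.20}, multiply by $\sigma^3\dot u$, and integrate; this is the most technical step because it generates many commutator terms $\int\sigma^3\partial_j u^k\partial_k F\,\dot u^j$, $\int\sigma^3 (\nabla u:\nabla u)\cdots$, and, crucially, \emph{boundary integrals} of the type displayed in the introduction, $-\int_{\partial\Omega}\sigma^3 Fu\cdot(\nabla n+(\nabla n)^{tr})(u^\perp\times n\cdot\nabla u)\,ds$. As the authors indicate, these are handled by rewriting them via $\nabla\cdot(g\times h)=\nabla\times g\cdot h-\nabla\times h\cdot g$, $\nabla\times\nabla(\cdot)=0$, and the divergence theorem, converting the boundary term back into a controllable interior integral using ${\rm curl}u\times n=0$. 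With \eqref{2.11}, \eqref{2.12}, $A_1$ and the $L^\infty$-bounds on $\rho,m$ in hand, one closes $A_2(T)\le\tfrac12 C_0^{1/2}$, hence $A_1+A_2\le C_0^{1/2}$. The bound $A_3(\sigma(T))\le M$ follows by integrating the $A_1$-type identity over $(0,\sigma(T))$ without the $\sigma$ weight, using that at $t=0$ the quantity equals $M$ by \eqref{1.9} and the increment is $O(C_0^{1/2})$.

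\textbf{Step 4: Upgrading the density/pressure bounds.} Finally, to get $\rho\le 7\bar\rho/4$ and $m\le 7\bar m/4$, I would use the effective-flux representation: along particle paths, $\eqref{1.1}_1$ gives $\tfrac{d}{dt}\log\rho=-{\rm div}u=-\tfrac{1}{\lambda+2\mu}(F+P-P_\infty)$, and similarly track $m$. One writes this as a Zlotnik-type ODE $y'=g(y)+b'(t)$ with $g$ coming from the pressure nonlinearity (so $g(\infty)=-\infty$) and $b$ the time-integral of $F$ along the path; the bound on $b(t_2)-b(t_1)$ in the form \eqref{2.3} comes from $\|F\|_{L^\infty}\lesssim\|F\|_{L^2}^{1/4}\|\nabla F\|_{L^6}^{3/4}$ plus \eqref{2.11}, \eqref{2.13}, Step 1 and Step 3, with $N_0=O(C_0^{\theta})$ small and $N_1$ controlled. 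Lemma \ref{Lm2.3} then yields $\rho\le\max\{\bar\rho,\hat\zeta\}+N_0<7\bar\rho/4$ for $\varepsilon$ small, and likewise for $m$. The main obstacle throughout is Step 3—specifically, the careful treatment of the boundary integrals arising from the slip condition in the highest-order estimate, which is exactly the new difficulty the exterior-domain geometry forces and which the single-phase references \cite{CLL2021} handle by the divergence-theorem trick sketched above.
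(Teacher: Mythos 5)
Your proposal follows essentially the same route as the paper: Proposition \ref{Prop3.1} is proved there by exactly this bootstrap — the basic energy estimate, the $\sigma$-weighted estimates for $A_1$ and $A_2$ via the material derivative of \eqref{1.20} with the slip-boundary terms converted to interior integrals by the divergence-theorem trick you describe (together with auxiliary ingredients you only implicitly invoke, namely the time-weighted bound on $\sqrt{\rho+m}\,\dot u$ near $t=0$ and $\int_0^T\sigma^3\|P-P_\infty\|_{L^4}^4\,dt\le CC_0$), the unweighted estimate on $(0,\sigma(T)]$ for $A_3$, and Zlotnik's lemma applied to the effective viscous flux for the upper bound on the densities. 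The only visible difference is that you run the Zlotnik argument separately for $\rho$ and for $m$, whereas the paper's Lemma \ref{Lm3.8} applies it to the sum and obtains $\rho+m\le\frac74(\bar\rho+\bar m)$; your variant is equally workable (the extra $m^\alpha$, resp. $\rho^\gamma$, term in the pressure only helps the sign) and in fact matches the separate bounds stated in \eqref{3.6} more directly.
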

\begin{proof}
	Proposition \ref{Prop3.1} is deduced from Lemmas \ref{Lm3.4}--\ref{Lm3.7}.
\end{proof}


 First, we start with the standard energy estimate of $(\rho,m,u)$.
\begin{lemma}\label{Lm3.2}
Suppose $(\rho,m,u)$ is a smooth solution of \eqref{1.1}--\eqref{1.5} on $\Omega\times(0,T]$. Then there is a positive constant C depending only on $\lambda$, $\mu$ and $\Omega$ such that
\begin{equation}\label{3.7}
	\mathop{\rm sup}\limits_{0\le t\le T}\int\left((\rho+m)|u|^2+G(\rho,m)\right)dx+\int_{0}^{T}\|\nabla u\|^2_{L^2}dt\le CC_0.
\end{equation}
\end{lemma}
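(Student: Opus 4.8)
The plan is to derive the basic energy identity directly from the system \eqref{1.1} and then absorb the boundary contributions using the slip condition \eqref{1.4}. First I would multiply $\eqref{1.1}_3$ by $u$ and integrate over $\Omega$. The diffusion terms $\mu\Delta u + (\lambda+\mu)\nabla\mathrm{div}\,u$, after integration by parts, produce $-\mu\|\nabla u\|_{L^2}^2 - (\lambda+\mu)\|\mathrm{div}\,u\|_{L^2}^2$ plus boundary integrals over $\partial\Omega$. The key point is that under $u\cdot n = 0$ and $\mathrm{curl}\,u\times n = 0$ on $\partial\Omega$, these boundary terms either vanish or reduce to a curvature term; more precisely, using the identity $\Delta u = \nabla\mathrm{div}\,u - \nabla\times\mathrm{curl}\,u$ one rewrites the viscous term so that the boundary integral involves $\mathrm{curl}\,u\times n$, which is zero. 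One should be slightly careful here: the genuinely correct bookkeeping (as in the references \cite{CL2021,CLL2021}) gives $\mu\|\mathrm{curl}\,u\|_{L^2}^2 + (\lambda+2\mu)\|\mathrm{div}\,u\|_{L^2}^2$ as the dissipation up to a lower-order boundary term controlled by the geometry of $\partial\Omega$; by Lemma \ref{Lm2.6} (the Bogovskii–von Wahl type estimate) this is comparable to $\|\nabla u\|_{L^2}^2$, which is what appears in \eqref{3.7}.

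Next I would handle the convective and pressure terms. The convective term $\mathrm{div}[(\rho+m)u\otimes u]\cdot u$ combines with $((\rho+m)u)_t\cdot u$ and, after using the two continuity equations $\eqref{1.1}_1$ and $\eqref{1.1}_2$ to note that $(\rho+m)_t + \mathrm{div}((\rho+m)u) = 0$, collapses to $\frac{d}{dt}\int \frac{1}{2}(\rho+m)|u|^2\,dx$ (the boundary term in the integration by parts vanishes because $u\cdot n = 0$). For the pressure term $\nabla P\cdot u$, integrating by parts gives $-\int P\,\mathrm{div}\,u\,dx$ (again no boundary term since $u\cdot n = 0$), and one must show this equals $-\frac{d}{dt}\int G(\rho,m)\,dx$. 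This is the computation that justifies the particular form of $G$: differentiating $G(\rho,m)$ in $t$, substituting $\rho_t = -\mathrm{div}(\rho u)$ and $m_t = -\mathrm{div}(m u)$, and integrating by parts should reproduce exactly $-\int P(\rho,m)\,\mathrm{div}\,u\,dx$ up to terms that integrate to zero. I expect this algebraic verification — checking that the renormalized potential $G$ defined via the two partial integrals in the pressure law $P(\rho,m) = \rho^\gamma + m^\alpha$ is consistent with both continuity equations simultaneously — to be the main technical obstacle, since the pressure depends on two densities transported by the same velocity field and the "potential energy" must be split correctly between the $\rho$- and $m$-contributions.

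Having assembled these pieces, one obtains the differential inequality
\begin{equation*}
	\frac{d}{dt}\int\left(\tfrac{1}{2}(\rho+m)|u|^2 + G(\rho,m)\right)dx + c\|\nabla u\|_{L^2}^2 \le 0
\end{equation*}
for some $c > 0$ depending on $\mu,\lambda$ (and $\Omega$ through Lemma \ref{Lm2.6}). Integrating in $t$ from $0$ to $T$ and using the definition \eqref{1.6} of $C_0$ together with the assumption $\bar\rho \ge \rho_\infty + 1$, $\bar m \ge m_\infty + 1$ (which ensures $G \ge 0$ and that $G$ is comparable to $(\rho - \rho_\infty)^2 + (m - m_\infty)^2$ on the relevant range, cf. Remark after the theorem) yields \eqref{3.7}. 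I would also remark that the far-field conditions \eqref{1.5} and the decay of $u$ at infinity are needed to justify all the integrations by parts on the unbounded domain $\Omega$, i.e. that there are no contributions from "infinity"; this is routine given the function spaces in Lemma \ref{Lm2.1} but should be mentioned.
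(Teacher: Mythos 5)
Your proposal is correct and follows essentially the same route as the paper: multiply $\eqref{1.1}_3$ by $u$ using the rewriting $-\mu\Delta u-(\lambda+\mu)\nabla\mathrm{div}\,u=-(\lambda+2\mu)\nabla\mathrm{div}\,u+\mu\nabla\times\mathrm{curl}\,u$ so that the slip conditions kill the boundary terms exactly (no curvature remainder is actually needed at this order), generate $\frac{d}{dt}\int G\,dx$ from the two continuity equations so that the pressure terms cancel (the paper does this by multiplying $\eqref{1.1}_1$, $\eqref{1.1}_2$ by the renormalizing factors, which is your computation run in reverse, and works with $P-P_\infty$ to avoid integrability issues at infinity), and finally invoke Lemma \ref{Lm2.6} to bound $\|\nabla u\|_{L^2}^2$ by $(\lambda+2\mu)\|\mathrm{div}\,u\|_{L^2}^2+\mu\|\mathrm{curl}\,u\|_{L^2}^2$. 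Integrating the resulting identity over $(0,T]$ gives \eqref{3.7}, exactly as in the paper.
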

\begin{proof}
	\par First, due to $-\Delta u=-\nabla{\rm div}u+\nabla\times{\rm curl}u$, rewrite the equation of $\eqref{1.1}_3$ as
	\begin{equation}\label{3.8}
		(\rho+m)\dot{u}-(\lambda+2\mu)\nabla{\rm div}u+\mu\nabla\times{\rm curl}u+\nabla P=0.
	\end{equation}
Multiplying \eqref{3.8} by $u$ and integrating the resultant equation over $\Omega$, we obtain that
\begin{equation}\label{3.9}
	\frac{1}{2}\left(\int(\rho+m)|u|^2dx\right)_t+(\lambda+2\mu)\|{\rm div}u\|^2_{L^2}+\mu\|{\rm curl}u\|^2_{L^2}+\int u\cdot\nabla Pdx=0.
\end{equation}
Multiplying $\eqref{1.1}_1$ by $\left(\int_{{\rho_\infty}}^{\rho}\frac{P(s,m)-P(\rho_\infty,m)}{s^2}ds+\frac{P(\rho,m)-P(\rho_\infty,m)}{\rho}\right)$ and using \eqref{1.4}, we have
\begin{equation}\label{3.10}
	\left(\int\rho\int_{{\rho_\infty}}^{\rho}\frac{P(s,m)-P(\rho_\infty,m)}{s^2}dsdx\right)_t+\int(P(\rho,m)-P(\rho_\infty,m)){\rm div}udx=0.
\end{equation}
By the same way, $\eqref{1.1}_2$ shows that
\begin{equation}\label{3.11}
	\left(\int m\int_{{m_\infty}}^{m}\frac{P(\rho,s)-P(\rho,m_\infty)}{s^2}dsdx\right)_t+\int(P(\rho,m)-P(\rho,m_\infty)){\rm div}udx=0.
\end{equation}

Combining \eqref{3.9}, \eqref{3.10} and \eqref{3.11}, we have
\begin{equation}\label{3.111}
	\left(\int\left(\frac{1}{2}(\rho+m)|u|^2+G(\rho,m)\right)dx\right)_t+(\lambda+2\mu)\|{\rm div}u\|_{L^2}^2+\mu\|{\rm curl}u\|^2_{L^2}=0.
\end{equation}
Integrating \eqref{3.111} over $(0,T]$ and using \eqref{2.7} yield \eqref{3.7}. This completes the proof.
\end{proof}
\begin{lemma}\label{Lm3.3}
Suppose $(\rho,m,u)$ is a smooth solution of \eqref{1.1}--\eqref{1.5} satisfying \eqref{3.5} on $\Omega\times(0,T]$. Then there is a positive constant C depending on $\lambda$, $\mu$, $\gamma$, $\alpha$, $\bar{\rho}$, $\bar{m}$, M and $\Omega$ such that
\begin{equation}\label{3.12}
	A_1(T)\le CC_0+C\int_{0}^{T}\int\sigma|\nabla u|^3dxdt,
\end{equation}
and
\begin{equation}\label{3.13}
	A_2(T)\le CC_0+CA_1(T)+C\int_{0}^{T}\int\sigma^3|\nabla u|^4dxdt.
	\end{equation}
\end{lemma}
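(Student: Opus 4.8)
The plan is to derive the two estimates \eqref{3.12} and \eqref{3.13} by testing the momentum equation \eqref{3.8} with the material derivative $\dot u$, which is the standard device (as in Huang--Li--Xin and the Navier--Stokes analogue) for producing the dissipation $\int\sigma(\rho+m)|\dot u|^2$ weighted by powers of $\sigma$. First I would multiply \eqref{3.8} by $\sigma\dot u$ and integrate over $\Omega$. The term $\int\sigma(\rho+m)\dot u\cdot\dot u\,dx$ gives the desired dissipation. For the pressure and viscous terms, I would integrate by parts, move one derivative off using $\dot u=u_t+u\cdot\nabla u$, and write $\int\sigma\nabla P\cdot\dot u$, $\int\sigma(\lambda+2\mu)\nabla\operatorname{div}u\cdot\dot u$, $\int\sigma\mu\nabla\times\operatorname{curl}u\cdot\dot u$ in the form $\frac{d}{dt}$ of a good quantity (like $\tfrac{\lambda+2\mu}{2}\sigma\|\operatorname{div}u\|_{L^2}^2+\tfrac{\mu}{2}\sigma\|\operatorname{curl}u\|_{L^2}^2-\sigma\int(P-P_\infty)\operatorname{div}u$) plus commutator remainders. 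Crucially, the boundary terms arising from these integrations by parts must be handled using $u\cdot n=0$ and $\operatorname{curl}u\times n=0$ together with the identity \eqref{1.21}; these are exactly the boundary integrals the introduction flags, and controlling them via $\nabla\cdot(g\times h)=\nabla\times g\cdot h-\nabla\times h\cdot g$ and the divergence theorem is one of the delicate points. The commutator terms, after the standard manipulations, reduce to integrals bounded by $C\int\sigma|\nabla u|^3$ (plus lower-order pieces absorbed by Lemma \ref{Lm3.2}, the a priori bounds \eqref{3.5} on $\rho,m$, and $C_0$), together with time-boundary contributions at $t=0$ controlled by $M$ and the compatibility condition \eqref{1.10} giving $\|\sqrt{\rho_0+m_0}\,\dot u(0)\|_{L^2}\le\|g\|_{L^2}$; integrating in $t$ over $(0,T]$ and using that $\sigma'\le1$ yields \eqref{3.12}.

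For \eqref{3.13}, the idea is to differentiate the momentum equation \eqref{3.8} with respect to time (or equivalently apply the operator $\partial_t+\operatorname{div}(u\,\cdot)$), multiply by $\sigma^3\dot u$, and integrate. Writing $\eqref{1.1}_3$ in the form $\partial_t((\rho+m)\dot u)+\operatorname{div}((\rho+m)u\otimes\dot u)=\partial_t(\mathcal L u-\nabla P)+\operatorname{div}(u(\mathcal L u-\nabla P))$ where $\mathcal L u=(\lambda+2\mu)\nabla\operatorname{div}u-\mu\nabla\times\operatorname{curl}u$, the left side produces $\frac12\frac{d}{dt}\int\sigma^3(\rho+m)|\dot u|^2$ minus the $\sigma$-derivative term $-\tfrac32\sigma^2\sigma'\int(\rho+m)|\dot u|^2$ which is exactly $O(A_1(T))$ after integration (hence the $CA_1(T)$ on the right of \eqref{3.13}). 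The right side, after integration by parts, yields the coercive term $-\int\sigma^3\big((\lambda+2\mu)(\operatorname{div}\dot u)^2+\mu|\operatorname{curl}\dot u|^2\big)$ — which by Lemma \ref{Lm2.6} controls $\int\sigma^3|\nabla\dot u|^2$ — plus a collection of cubic/quartic remainder terms in $\nabla u$ and $\dot u$; these are estimated by Cauchy--Schwarz and Gagliardo--Nirenberg \eqref{2.1} so that the $\dot u$-gradient factors are absorbed into the coercive term and what survives is bounded by $CC_0+CA_1(T)+C\int\sigma^3|\nabla u|^4$. Again the boundary integrals generated when integrating by parts the viscous terms against $\sigma^3\dot u$ are the main technical hurdle, and they are controlled using \eqref{1.4}, \eqref{1.21}, and the extended normal field $n\in C^3(\bar\Omega)$ from \eqref{3.1}.

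The step I expect to be the main obstacle is the rigorous treatment of the boundary terms in both estimates: unlike the whole-space case there is a nontrivial $\partial\Omega$, and the terms of the type $\int_{\partial\Omega}\sigma F u\cdot(\nabla n+(\nabla n)^{tr})(u^{\perp}\times n\cdot\nabla u)\,ds$ do not vanish pointwise and must be rewritten — using $\operatorname{curl}u\times n=0$, the identity $u\cdot\nabla u\cdot n=-u\cdot\nabla n\cdot u$, and the vector identity $\nabla\cdot(g\times h)=\operatorname{curl}g\cdot h-\operatorname{curl}h\cdot g$ together with $\operatorname{curl}\nabla=0$ — to convert boundary integrals into volume integrals that are then absorbable. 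Everything else (the interior commutators, the use of Lemma \ref{Lm2.6} to pass from $\operatorname{div},\operatorname{curl}$ back to $\nabla$, and the Gagliardo--Nirenberg bookkeeping) is routine once the boundary contributions are under control, and the $t=0$ endpoint terms are handled by \eqref{1.9}--\eqref{1.10}.
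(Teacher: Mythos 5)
Your overall route coincides with the paper's: for \eqref{3.12} you test $\eqref{1.1}_3$ (in the form \eqref{3.8}) with $\sigma\dot u$, and for \eqref{3.13} you apply $\sigma^3\dot u^j\left[\partial_t+{\rm div}(u\,\cdot)\right]$ to the momentum equation and integrate, converting the boundary integrals into volume integrals by means of $u\cdot\nabla u\cdot n=-u\cdot\nabla n\cdot u$, $u=u^{\bot}\times n$ and the divergence theorem; this is exactly the paper's scheme, including taking the time weights $\sigma$ and $\sigma^3$ so that no initial-time terms survive.

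There is, however, one step that would fail as you state it: you claim that the coercive term $-\int\sigma^3\bigl((\lambda+2\mu)({\rm div}\dot u)^2+\mu|{\rm curl}\dot u|^2\bigr)dx$ controls $\int\sigma^3|\nabla\dot u|^2dx$ ``by Lemma \ref{Lm2.6}''. Lemma \ref{Lm2.6} requires $v\cdot n=0$ on $\partial\Omega$, and $\dot u$ does not satisfy this: although $u_t\cdot n=0$, the convective part gives $\dot u\cdot n=u\cdot\nabla u\cdot n=-u\cdot\nabla n\cdot u\neq0$ in general. Since several of your remainder terms are meant to be absorbed through $\delta\sigma^3\|\nabla\dot u\|_{L^2}^2$, this div--curl control of $\nabla\dot u$ is needed to close the argument, not a cosmetic detail. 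The paper's remedy (see \eqref{3.25}) is to note that $(\dot u-(u\cdot\nabla n)\times u^{\bot})\cdot n=0$ on $\partial\Omega$, apply Lemma \ref{Lm2.6} to this corrected field, and bound the correction---supported in $B_{2R}$ thanks to the extension \eqref{3.1} of $n$---by $C(\|\nabla u\|_{L^4(B_{2R})}^4+\|\nabla u\|_{L^2(B_{2R})}^4)$, which is precisely of the form permitted on the right-hand side of \eqref{3.13}. A minor further remark: the compatibility condition \eqref{1.10} is not needed for the $t=0$ endpoint in either estimate, because the weights $\sigma$ and $\sigma^3$ vanish at $t=0$; that is the very reason the paper works with $h\ge1$ (respectively $h=3$) rather than $h=0$.
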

\begin{proof}
	Motivated by Hoff \cite{H1995} and Cai--Li--L\"u \cite{CLL2021}. For $h\ge0$, multiplying $\eqref{1.1}_3$ by $\sigma^h\dot{u}$, and then integrating it over $\Omega$ lead to
	\begin{equation}\label{3.14}
		\begin{aligned}
		\int\sigma^h(\rho+m)|\dot{u}|^2dx
		&=(\lambda+2\mu)\int\nabla{\rm div}u\cdot\sigma^h\dot{u}dx-\mu\int\nabla\times{\rm curl}u\cdot\sigma^h\dot{u}dx-\int \nabla P\cdot\sigma^h\dot{u}dx\\
		&=:I_1+I_2+I_3.
		\end{aligned}
	\end{equation}
By using \eqref{1.21} and the fact that ${\rm div}(u\cdot\nabla u)=\nabla u:\nabla u+u\cdot\nabla{\rm div}u$, a direct calculation gives
\begin{equation}\label{3.15}
	\begin{aligned}
I_1&=(\lambda+2\mu)\int\nabla{\rm div}u\cdot\sigma^h\dot{u}dx\\
		&=-(\lambda+2\mu)\int\sigma^h{\rm div}u{\rm div}\dot{u}dx+(\lambda+2\mu)\int_{\partial\Omega}\sigma^h{\rm div}u\dot{u}\cdot nds\\
		&=-(\lambda+2\mu)\int[\sigma^h{\rm div}u{\rm div}u_t+\sigma^h{\rm div}u{\rm div}(u\cdot\nabla u)]dx+(\lambda+2\mu)\int_{\partial\Omega}\sigma^h{\rm div}uu\cdot\nabla u\cdot nds\\
		&\le-\frac{\lambda+2\mu}{2}\left(\int\sigma^h({\rm div}u)^2dx\right)_t+Ch\sigma^{h-1}\sigma'\int({\rm div}u)^2dx-(\lambda+2\mu)\int\sigma^h{\rm div}u\nabla u:\nabla udx\\
		&\quad-\frac{\lambda+2\mu}{2}\int\sigma^hu\cdot\nabla({\rm div}u)^2dx-(\lambda+2\mu)\int_{\partial\Omega}\sigma^h{\rm div}uu\cdot\nabla n\cdot uds \\
		&\le-\frac{\lambda+2\mu}{2}\left(\int\sigma^h({\rm div}u)^2dx\right)_t+\delta\|(\rho+m)^{\frac{1}{2}}\dot{u}\|^2_{L^2}+Ch\sigma^{h-1}\sigma'\|\nabla u\|^2_{L^2}\\
&\quad+C\sigma^h\left(\|\nabla u\|_{L^2}^2+\|\nabla u\|^4_{L^2}+\|\nabla u\|^3_{L^3}\right).
	\end{aligned}
\end{equation}
For the last boundary term on the right--hand side of \eqref{3.15}, it follows from \eqref{1.19}, \eqref{2.11}, \eqref{3.5} and Young's inequality that
\begin{align*}
		-(\lambda+2\mu)\int_{\partial\Omega}{\rm div}uu\cdot\nabla n \cdot uds&=-\int_{\partial\Omega}Fu\cdot\nabla n \cdot uds-\int_{\partial\Omega}(P-{P}_\infty)u\cdot\nabla n \cdot uds\\
		&\le C\left(\|F\|_{L^2(\partial\Omega)}\|u\|_{L^4(\partial\Omega)}^2+\|u\|^2_{L^2(\partial\Omega)}\right)\\
		&\le C\left(\|\nabla F\|_{L^2}\|\nabla u\|_{L^2}^2+\|\nabla u\|^2_{L^2}\right)\\
		&\le\delta\|(\rho+m)^{\frac{1}{2}}\dot{u}\|^2_{L^2}+C\left(\|\nabla u\|_{L^2}^2+\|\nabla u\|^4_{L^2}\right).
\end{align*}
Notice that ${\rm curl}(u\cdot\nabla u)=\nabla u^i\times\nabla_iu+u\cdot\nabla{\rm curl}u$, by \eqref{1.4} we have
\begin{equation}\label{3.16}
	\begin{aligned}
	I_2=&-\mu\int\sigma^h\nabla\times{\rm curl}u\cdot\dot{u}dx\\
	=&-\mu\sigma^h\int{\rm curl}u\cdot{\rm curl}u_tdx-\mu\sigma^h\int{\rm curl}u\cdot{\rm curl}(u\cdot\nabla u)dx-\mu\sigma^h\int_{\partial\Omega}{\rm curl}u\times\dot{u}\cdot nds\\
	=&-\frac{\mu}{2}\left(\sigma^h\int({\rm curl}u)^2dx\right)_t+\frac{\mu}{2}h\sigma^{h-1}\sigma'\int({\rm curl}u)^2dx-\mu\sigma^h\int{\rm curl}u\cdot(\nabla u^i\times\nabla_iu)dx\\
	&-\mu\int u\cdot\nabla\left({\frac{({\rm curl}u)^2}{2}}\right)dx\\
	\le&-\frac{\mu}{2}\left(\sigma^h\int({\rm curl}u)^2dx\right)_t+Ch\sigma^{h-1}\sigma'\|\nabla u\|^2_{L^2}+C\sigma^h\|\nabla u\|^3_{L^3}.
\end{aligned}
\end{equation}

Finally, a direct calculation leads to
\begin{equation}\label{3.17}
\begin{aligned}
	I_3=&-\int\sigma^h\dot{u}\cdot\nabla Pdx \\
	=&\int\sigma^h(P-{P}_\infty){\rm div}u_tdx+\int\sigma^h(P-{P}_\infty){\rm div}(u\cdot\nabla u)dx-\int_{\partial\Omega}\sigma^h(P-{P}_\infty)u\cdot\nabla u\cdot nds \\
	=&\left(\sigma^h\int(P-{P}_\infty){\rm div}udx\right)_t-h\sigma^{h-1}\sigma'\int(P-{P}_\infty){\rm div}udx-\sigma^h\int P_t{\rm div}udx \\
	&+\int\sigma^h(P-{P}_\infty)(\nabla u:\nabla u+u\cdot\nabla{\rm div}u)dx+\int_{\partial\Omega}\sigma^h(P-{P}_\infty)u\cdot\nabla n\cdot uds\\
	=&\left(\sigma^h\int(P-{P}_\infty){\rm div}udx\right)_t-h\sigma^{h-1}\sigma'\int(P-{P}_\infty){\rm div}udx+\int\sigma^h(P-{P}_\infty)\nabla u:\nabla udx\\
	&+\sigma^h\int((\gamma-1)\rho^\gamma+(\alpha-1)m^{\alpha}+P_\infty)({\rm div}u)^2dx+\int_{\partial\Omega}\sigma^h(P-{P}_\infty)u\cdot\nabla n\cdot uds\\
	\le&\left(\sigma^h\int(P-{P}_\infty){\rm div}udx\right)_t+Ch\sigma^{h-1}\sigma'(\|P-{P}_\infty\|^2_{L^2}+\|\nabla u\|^2_{L^2})+C\sigma^h\|\nabla u\|^2_{L^2}.
\end{aligned}
\end{equation}
where  we have used the fact that
\begin{align*}
	-\sigma^h\!\!\int P_t{\rm div}udx=&\sigma^h\int\left({\rm div}(Pu)+(\gamma-1)\rho^{\gamma}{\rm div}u+(\alpha-1)m^{\alpha}{\rm div}u\right){\rm div}udx\\
	=&\sigma^h\!\!\int{\rm div}(Pu){\rm div}udx+\sigma^h\!\!\int(\gamma-1)\rho^{\gamma}({\rm div}u)^2dx+\sigma^h\!\!\int(\alpha-1)m^{\alpha}({\rm div}u)^2dx.
\end{align*}
Combining \eqref{3.14} and \eqref{3.15}--\eqref{3.17} gives that for enough small $\delta$.
\begin{equation}\label{3.18}
	\begin{aligned}
		&\left(\frac{\lambda+2\mu}{2}\sigma^h\!\!\int({\rm div}u)^2dx+\frac{\mu}{2}\sigma^h\!\!\int({\rm curl}u)^2dx\right)_t+\sigma^h\!\!\int(\rho+m)|\dot{u}|^2dx-\left(\sigma^h\!\!\int(P-{P}_\infty){\rm div}udx\right)_t\\
		&\le\delta\sigma^h\|(\rho\!+\!m)^{\frac{1}{2}}\dot{u}\|^2_{L^2}\!+\!Ch\sigma^{h-1}\sigma'(\|\nabla u\|^2_{L^2}\!+\!\|P\!-\!P_\infty\|_{L^2}^2)\!+\!C\sigma^h\!\!\left(\|\nabla u\|_{L^2}^2\!+\!\|\nabla u\|^4_{L^2}\!+\!\|\nabla u\|^3_{L^3}\right).
	\end{aligned}
\end{equation}
Integrating \eqref{3.18} over $(0,T]$, by Lemma \ref{Lm2.6}, \eqref{3.5} and \eqref{3.7}, for $h\ge1$,  we have
\begin{align*}
	&\sigma^h\|\nabla u\|^2_{L^2}+\int_{0}^{T}\sigma^h\int(\rho+m)|\dot{u}|^2dx\\
	&\le CC_0+C\int_{0}^{T}\sigma^h\|\nabla u\|^3_{L^3}dt+C\int_{0}^{T}\sigma^h\|\nabla u\|^4_{L^2}dt,
\end{align*}
where we have used $\int_{0}^{T}h\sigma^{h-1}\sigma'\|P-P_\infty\|_{L^2}^2dt\le CC_0$.
Choosing $h=1$, and using \eqref{3.5} and \eqref{3.7}, we get \eqref{3.12}.

Now, we prove \eqref{3.13}. Applying the operator $\sigma^h\dot{u}^j[\partial/\partial t+{\rm div}(u\cdot)]$ to $\eqref{1.20}^j$, summing all the equalities with respect to $j$, and integrating over $\Omega$, we obtain
\begin{equation}\label{3.19}
	\begin{aligned}
		&\frac{1}{2}\left(\sigma^h\int(\rho+m)|\dot{u}|^2dx\right)_t-\frac{1}{2}h\sigma^{h-1}\sigma'\int(\rho+m)|\dot{u}|^2dx\\
		&=\int\sigma^h[\dot{u}\cdot\nabla F_t+\dot{u}^j{\rm div}(u\partial_jF)]dx-\mu\int\sigma^h[\dot{u}\cdot\nabla\times{\rm curl}u_t+\dot{u}^j{\rm div}(u(\nabla\times{\rm curl}u)^j)]dx\\
		&=:J_1+J_2.
	\end{aligned}
\end{equation}
A direct computation for $J_1$ shows that{\small
\begin{align}\label{3.20}
	J_1=&\int\sigma^h[\dot{u}\cdot\nabla F_t+\dot{u}^j{\rm div}(u\partial_jF)]dx\notag\\
	=&-\int\sigma^h{\rm div}\dot{u}F_tdx+\int\sigma^h\dot{u}\cdot\nabla{\rm div}(uF)dx-\int\sigma^h\dot{u}^j{\rm div}(\partial_juF)dx+\int_{\partial\Omega}\sigma^hF_tu\cdot\nabla u\cdot nds\notag\\
	=&-(2\mu+\lambda)\int\sigma^h{\rm div}\dot{u}{\rm div}u_tdx+\int\sigma^h{\rm div}\dot{u}P_tdx-\int\sigma^h{\rm div}\dot{u}{\rm div}(uF)dx+\int_{\partial\Omega}\sigma^h{\rm div}(uF)\dot{u}\cdot nds\notag\\
	&-\int\sigma^hF\dot{u}\cdot\nabla{\rm div} udx-\int\sigma^h\dot{u}\cdot\nabla u\cdot\nabla Fdx+\int_{\partial\Omega}\sigma^hF_tu\cdot\nabla u\cdot nds\notag\\
	=&-(2\mu+\lambda)\int\sigma^h({\rm div}\dot{u})^2dx+(2\mu+\lambda)\int\sigma^h{\rm div}\dot{u}\nabla u:\nabla udx+\int\sigma^h{\rm div}\dot{u}u\cdot\nabla\left({F+P-{P}_\infty}\right)dx\notag\\
	&-\int\sigma^h{\rm div}\dot{u}(u\cdot\nabla P+\gamma\rho^\gamma{\rm div}u+\alpha m^{\alpha}{\rm div}u)dx-\int\sigma^h F{\rm div}\dot{u}{\rm div}udx-\int\sigma^h{\rm div}\dot{u}u\cdot\nabla Fdx\notag\\
	&+\int_{\partial\Omega}\sigma^h{\rm div}(uF)u\cdot\nabla u\cdot nds-\frac{1}{2\mu+\lambda}\int\sigma^hF\dot{u}\cdot\nabla Fdx-\frac{1}{2\mu+\lambda}\int_{\partial\Omega}\sigma^h(P-P_{\infty})F\dot{u}\cdot ndx\notag\\
	&+\frac{1}{2\mu+\lambda}\int\sigma^h(F{\rm div}\dot{u}+\dot{u}\cdot\nabla F)(P-P_\infty)dx+\int_{\partial\Omega}\sigma^hF_tu\cdot\nabla u\cdot nds-\int\sigma^h\dot{u}\cdot\nabla u\cdot\nabla Fdx\notag\\
	=&-(2\mu+\lambda)\int[\sigma^h({\rm div}\dot{u})^2-\sigma^h{\rm div}\dot{u}\nabla u:\nabla u]dx-\int\sigma^h{\rm div}\dot{u}(\gamma\rho^\gamma{\rm div}u+\alpha m^{\alpha}{\rm div}u)dx\notag\\
	&-\int\sigma^h F{\rm div}\dot{u}{\rm div}udx+\int_{\partial\Omega}\sigma^h{\rm div}(uF)u\cdot\nabla u\cdot nds-\frac{1}{2\mu+\lambda}\int\sigma^hF\dot{u}\cdot\nabla Fdx\notag\\
	&+\frac{1}{2\mu+\lambda}\int\sigma^h\dot{u}\cdot\nabla F(P-P_\infty)dx+\frac{1}{2\mu+\lambda}\int\sigma^hF{\rm div}\dot{u}(P-P_\infty)dx\notag\\
	&-\frac{1}{2\mu+\lambda}\int_{\partial\Omega}\sigma^h(P-P_{\infty})F\dot{u}\cdot nds+\int_{\partial\Omega}\sigma^hF_tu\cdot\nabla u\cdot nds-\int\sigma^h\dot{u}\cdot\nabla u\cdot\nabla Fdx.
\end{align}}
\!\!Setting $u^{\bot}\triangleq-u\times n$, we have $u=u^{\bot}\times n$. Applying \eqref{2.11}, \eqref{2.13}, we can estimate the three boundary terms as
\begin{align}\label{3.21}
		&\int_{\partial\Omega}\sigma^hF_tu\cdot\nabla u\cdot nds\notag\\
		&=-\left(\int_{\partial\Omega}\sigma^hFu\cdot\nabla n\cdot uds\right)_t+h\sigma^{h-1}\sigma'\int_{\partial\Omega}Fu\cdot\nabla n\cdot uds+\int_{\partial\Omega}\sigma^hFu\cdot(\nabla n+(\nabla n)^{tr})\cdot u_tds\notag\\
		&=-\left(\int_{\partial\Omega}\sigma^hFu\cdot\nabla n\cdot uds\right)_t+h\sigma^{h-1}\sigma'\int_{\partial\Omega}Fu\cdot\nabla n\cdot uds+\int_{\partial\Omega}\sigma^hF{u}\cdot(\nabla n+(\nabla n)^{tr})\cdot \dot{u}ds\notag\\
		&\quad-\int_{\partial\Omega}\sigma^hFu\cdot(\nabla n+(\nabla n)^{tr})^i(u^{\bot}\times n\cdot\nabla u^i)ds\notag\\
		&\le-\!\left(\int_{\partial\Omega}\!\sigma^hFu\cdot\nabla n\cdot uds\right)_t\!+\!C(h\sigma^{h-1}\sigma'\|F\|_{L^2(\partial\Omega)}\|u\|_{L^4(\partial\Omega)}^2\!+\!\sigma^h\|F\|_{L^4(\partial\Omega)}\|u\|_{L^4(\partial\Omega)}\|\dot{u}\|_{L^2(\partial\Omega)})\notag\\
		&\quad+\int(|u|^2|\nabla u||\nabla F|+|\nabla u|^2|u||F|+|u|^2|\nabla u||F|)dx\notag\\
		&\le-\left(\int_{\partial\Omega}\sigma^hFu\cdot\nabla n\cdot uds\right)_t+C(h\sigma^{h-1}\sigma'\|\nabla F\|_{L^2}\|\nabla u\|_{L^2}^2+\sigma^h\|\nabla F\|_{L^2}\|\nabla u\|_{L^2}\|\nabla\dot{u}\|_{L^2})\notag\\
		&\quad+C\sigma^h(\|\nabla F\|_{L^6}\|\nabla u\|_{L^2}\|u\|_{L^6}^2+\|F\|_{L^6}\|\nabla u\|_{L^3}^2\|u\|_{L^6}^2+\|F\|_{L^6}\|\nabla u\|_{L^2}\|u\|_{L^6}^2)\notag\\
		&\le-\left(\int_{\partial\Omega}\sigma^hFu\cdot\nabla n\cdot uds\right)_t+Ch\sigma^{h-1}\sigma'\|(\rho+m)^{\frac{1}{2}}\dot{u}\|_{L^2}\|\nabla u\|_{L^2}^2+\delta\sigma^h\|\nabla\dot{u}\|_{L^2}^2\notag\\
		&\quad+C\sigma^h(\|(\rho+m)^{\frac{1}{2}}\dot{u}\|_{L^2}^2(\|\nabla u\|_{L^2}^4+1)+\|\nabla u\|_{L^2}^2+\|\nabla u\|_{L^2}^6+\|\nabla u\|_{L^4}^4),
\end{align}
where we have used
\begin{align*}
	&-\int_{\partial\Omega}\sigma^hFu\cdot(\nabla n+(\nabla n)^{tr})^i(u^{\bot}\times n\cdot\nabla u^i)ds\\
	&=\sigma^h\int{\rm div}(Fu\cdot(\nabla n+(\nabla n)^{tr})^iu^{\bot}\times\nabla u^i)dx\\
	&=\sigma^h\int u^{\bot}\times\nabla u^i\cdot\nabla(Fu\cdot(\nabla n+(\nabla n)^{tr})^i)dx+\sigma^h\int(\nabla\times u^{\bot}\cdot\nabla  u^i)(Fu\cdot(\nabla n+(\nabla n)^{tr})^i)dx.
\end{align*}
And
\begin{equation}\label{3.22}
	\begin{split}
	&\int_{\partial\Omega}\sigma^h{\rm div}(uF)u\cdot\nabla u\cdot nds-\frac{1}{2\mu+\lambda}\int_{\partial\Omega}\sigma^h(P-P_{\infty})F\dot{u}\cdot nds\\
	&=\int_{\partial\Omega}\sigma^h(u\cdot\nabla F)(u\cdot\nabla u\cdot n)ds+\frac{1}{2\mu+\lambda}\int_{\partial\Omega}\sigma^hF^2u\cdot\nabla u\cdot nds\\
	&=-\int_{\partial\Omega}\sigma^h(u^{\bot}\times n\cdot\nabla F)(u\cdot\nabla n\cdot u)ds-\frac{1}{2\mu+\lambda}\int_{\partial\Omega}\sigma^hF^2u\cdot\nabla n\cdot uds\\
	&=\int\sigma^h{\rm div}((u\cdot\nabla n\cdot u)(u^{\bot}\times\nabla F))dx-\frac{1}{2\mu+\lambda}\int_{\partial\Omega}\sigma^hF^2u\cdot\nabla n\cdot uds\\
	&=\int\left[\sigma^h\nabla(u\cdot\nabla n\cdot u)\cdot(u^{\bot}\times\nabla F)+(u\cdot\nabla n\cdot u)\nabla\times u^{\bot}\cdot\nabla F\right]dx+\frac{1}{2\mu+\lambda}\int_{\partial\Omega}\sigma^hF^2u\cdot\nabla n\cdot uds\\
	&\le\int\sigma^h(|\nabla u||u|^2|\nabla F|+|u|^3|\nabla F|)dx+C\sigma^h\|F\|_{L^4(\partial\Omega)}^2\|u\|_{L^4(\partial\Omega)}^2\\
	&\le C\sigma^h(\|\nabla u\|_{L^2}\|u\|^2_{L^6}\|\nabla F\|_{L^6}+\|u\|^3_{L^6}\|\nabla F\|_{L^2}+\|\nabla F\|^2_{L^2}\|\nabla u\|^2_{L^2})\\
	&\le\sigma^h(\|(\rho+m)^{\frac{1}{2}}\dot{u}\|^2_{L^2}(\|\nabla u\|_{L^2}^2+\|\nabla u\|_{L^2}^4)+\|\nabla u\|_{L^2}^2+\|\nabla u\|_{L^2}^6)+\delta\sigma^h\|\nabla\dot{u}\|_{L^2}^2.
	\end{split}
\end{equation}
It follows from \eqref{2.11}, \eqref{2.13} and \eqref{3.20}--\eqref{3.22} that
\begin{equation}\label{3.23}
	\begin{aligned}
		J_1&\le-\left(\int_{\partial\Omega}\sigma^hFu\cdot\nabla n\cdot uds\right)_t-(2\mu+\lambda)\int\sigma^h({\rm div}\dot{u})^2dx+Ch\sigma^{h-1}\sigma'\|(\rho+m)^{\frac{1}{2}}\dot{u}\|_{L^2}\|\nabla u\|_{L^2}^2\\
		&\quad+\delta\sigma^h\|\nabla\dot{u}\|_{L^2}^2+C\sigma^h(\|(\rho+m)^{\frac{1}{2}}\dot{u}\|_{L^2}^2(\|\nabla u\|_{L^2}^2+\|\nabla u\|_{L^2}^4)+\|\nabla u\|_{L^2}^2+\|\nabla u\|_{L^2}^6+\|\nabla u\|_{L^4}^4)\\
		&\quad+C\sigma^h\left[\|\nabla\dot{u}\|_{L^2}(\|\nabla u\|^2_{L^4}+\|\nabla u\|_{L^2})+(\|F\|_{L^2}\|\dot{u}\|_{L^6}\|\nabla F\|_{L^3}+\|\nabla F\|_{L^2}\|\dot{u}\|_{L^6}\|P\!-\!P_\infty\|_{L^3}\right]\\
&\quad+\|F\|_{L^6}\|\nabla\dot{u}\|_{L^2}\|P\!-\!P_{\infty}\|_{L^3}+\|\dot{u}\|_{L^6}\|\nabla F\|_{L^3}\|\nabla u\|_{L^2})\\
		&\le-\left(\int_{\partial\Omega}\sigma^hFu\cdot\nabla n\cdot uds\right)_t-(2\mu+\lambda)\int\sigma^h({\rm div}\dot{u})^2dx+Ch\sigma^{h-1}\sigma'\|(\rho+m)^{\frac{1}{2}}\dot{u}\|_{L^2}\|\nabla u\|_{L^2}^2\\
		&\quad+\delta\sigma^h\|\nabla\dot{u}\|_{L^2}^2+C\sigma^h(\|(\rho+m)^{\frac{1}{2}}\dot{u}\|_{L^2}^2(1+\|\nabla u\|_{L^2}^4)+\|\nabla u\|_{L^2}^2+\|\nabla u\|_{L^2}^6+\|\nabla u\|_{L^4}^4).
	\end{aligned}
\end{equation}
For the term $J_2$, a direct computation yields{\small
\begin{align}\label{3.24}
		J_2&=-\mu\int\sigma^h\dot{u}\cdot\nabla\times{\rm curl}u_tdx-\mu\int\sigma^h\dot{u}^j{\rm div}((\nabla\times{\rm curl}u)^ju)dx\notag\\
		&=-\mu\int\sigma^h{\rm curl}\dot{u}{\rm curl}u_tdx+\mu\int_{\partial\Omega}\sigma^h{\rm curl}u_t\times\dot{u}\cdot nds-\mu\int\sigma^h\dot{u}\cdot(\nabla\times{\rm curl}u){\rm div}udx\notag\\
		&\quad-\mu\int\sigma^h{u}^i\dot{u}\cdot\nabla\times(\nabla_i{\rm curl}u)dx\notag\\
		&=-\mu\int\sigma^h({\rm curl}\dot{u})^2dx+\mu\int\sigma^h{\rm curl}\dot{u}\cdot(\nabla u^i\times\nabla^iu)dx+\mu\int\sigma^h{\rm curl}\dot{u}\cdot(u\cdot\nabla{\rm curl}u)dx\notag\\
		&\quad-\mu\int\sigma^h{\rm div}u{\rm curl}\dot{u}\cdot{\rm curl}udx-\mu\int\sigma^h\nabla{\rm div}u\times\dot{u}\cdot{\rm curl}udx-\mu\int_{\partial\Omega}\sigma^h{\rm curl}u\times({\rm div}u\dot{u})\cdot nds\notag\\
		&\quad-\mu\int\sigma^hu\cdot\nabla{\rm curl}u\cdot{\rm curl}\dot{u}dx-\mu\int\sigma^h\nabla{u}^i\times\dot{u}\cdot(\nabla_i{\rm curl}u)dx-\mu\int_{\partial\Omega}\sigma^h\nabla_i{\rm curl}u\times(u^i\dot{u})\cdot nds\notag\\
		&=\quad-\mu\int\sigma^h({\rm curl}\dot{u})^2dx+\mu\int\sigma^h{\rm curl}\dot{u}\cdot(\nabla u^i\times\nabla^iu)dx-\mu\int\sigma^h{\rm div}u{\rm curl}\dot{u}\cdot{\rm curl}udx\notag\\
		&\quad-\mu\int\sigma^h\nabla{\rm div}u\times\dot{u}\cdot{\rm curl}udx-\mu\int\sigma^h{\rm curl}u\times\dot{u}\cdot\nabla{\rm div}udx-\mu\int\sigma^h{\rm div}u{\rm div}({\rm curl}u\times\dot{u})dx\notag\\
		&\quad-\mu\int\sigma^h\nabla{u}^i\times\dot{u}\cdot(\nabla_i{\rm curl}u)dx-\mu\int\sigma^h\nabla u^i\cdot(\nabla_i{\rm curl}u\times\dot{u})dx-\mu\int \sigma^hu^i{\rm div}(\nabla_i{\rm curl}u\times\dot{u})dx\notag\\
		&=-\mu\int\sigma^h({\rm curl}\dot{u})^2dx+\mu\int\sigma^h{\rm curl}\dot{u}\cdot(\nabla u^i\times\nabla^iu)dx-\mu\int\sigma^h{\rm div}u{\rm curl}\dot{u}\cdot{\rm curl}udx\notag\\
		&\quad-\mu\int\sigma^h{\rm div}u{\rm div}({\rm curl}u\times\dot{u})dx-\mu\int \sigma^hu^i\nabla^i{\rm div}({\rm curl}u\times\dot{u})dx-\mu\int\sigma^h\nabla u^i\cdot({\rm curl}u\times\nabla_i\dot{u})dx\notag\\
		&=-\mu\int\sigma^h({\rm curl}\dot{u})^2dx+\mu\int\sigma^h{\rm curl}\dot{u}\cdot(\nabla u^i\times\nabla^iu)dx-\mu\int\sigma^h{\rm div}u{\rm curl}\dot{u}\cdot{\rm curl}udx\notag\\
		&\quad-\mu\int\sigma^h\nabla u^i\cdot({\rm curl}u\times\nabla_i\dot{u})dx\notag\\
		&\le-\mu\int\sigma^h({\rm curl}\dot{u})^2dx+\delta\sigma^h\|\nabla\dot{u}\|_{L^2}^2+C\sigma^h\|\nabla u\|_{L^4}^4.
\end{align}}
\!\!Combining $u=u^{\bot}\times n$ and \eqref{1.4} gives
\begin{equation*}
	(\dot{u}-(u\cdot\nabla n)\times u^{\bot})\cdot n=0\ \text{ on }\ \partial\Omega,
\end{equation*}
which together with \eqref{2.7} and \eqref{3.1} implies
\begin{equation}\label{3.25}
	\begin{aligned}
	\|\nabla\dot{u}\|_{L^2}^2&\le\|\dot{u}-(u\cdot\nabla n)\times u^{\bot}\|_{L^2}^2+\|(u\cdot\nabla n)\times u^{\bot}\|_{L^2}^2\\
	&\le C(\|{\rm div}\dot{u}\|_{L^2}^2+\|{\rm curl}\dot{u}\|_{L^2}^2+\|(u\cdot\nabla n)\times u^{\bot}\|_{L^2}^2+\|\nabla((u\cdot\nabla n)\times u^{\bot})\|_{L^2}^2)\\
	&\le C(\|{\rm div}\dot{u}\|_{L^2}^2+\|{\rm curl}\dot{u}\|_{L^2}^2+\|u\|_{L^4(B_{2R})}^4+\|\nabla u\|_{L^4(B_{2R})}^2\|u\|_{L^4(B_{2R})}^2)\\
	&\le C(R)(\|{\rm div}\dot{u}\|_{L^2}^2+\|{\rm curl}\dot{u}\|_{L^2}^2+\|\nabla u\|_{L^4(B_{2R})}^4+\|\nabla u\|_{L^2(B_{2R})}^4.
	\end{aligned}
\end{equation}
Putting \eqref{3.23}, \eqref{3.24} and \eqref{3.25} into \eqref{3.19}, for a enough small $\delta$, we obtain
\begin{equation}\label{3.26}
	\begin{aligned}
		&\left(\int\sigma^h(\rho+m)|\dot{u}|^2dx\right)_t+\sigma^h\|\nabla\dot{u}\|^2_{L^2}+\left(\int_{\partial\Omega}\sigma^hFu\cdot\nabla n\cdot udx\right)_t\\
		&\le C\sigma^{h-1}\sigma'\|(\rho+m)^{\frac{1}{2}}\dot{u}\|_{L^2}\|\nabla u\|^2_{L^2}+C\sigma^h(\|(\rho+m)^{\frac{1}{2}}\dot{u}\|_{L^2}^2(1+\|\nabla u\|_{L^2}^4)\\
&\quad+\|\nabla u\|_{L^2}^2+\|\nabla u\|_{L^2}^6+\|\nabla u\|_{L^4}^4).
	\end{aligned}
\end{equation}
Integrating \eqref{3.26} over $(0,T]$ and  using \eqref{2.11} and \eqref{3.5}, when $h\ge3$, we have
\begin{align}\label{3.42}
		&\sigma^h\int(\rho+m)|\dot{u}|^2dx+\int_{0}^{T}\sigma^h\|\nabla\dot{u}\|^2_{L^2}dt\notag\\
		&\le-\int_{\partial\Omega}\sigma^hFu\cdot\nabla n\cdot uds+CA_1(T)+CC_0+C\int_{0}^{T}\sigma^h\|\nabla u\|^4_{L^4}dt\\
		&\le\delta\sigma^h\|(\rho+m)^{\frac{1}{2}}\dot{u}\|^2_{L^2}+CA_1(T)+CC_0+C\int_{0}^{T}\sigma^h\|\nabla u\|^4_{L^4}dt,\notag
	\end{align}
where in the last inequality we have used
\begin{align*}
	-\int_{\partial\Omega}\sigma^hFu\cdot\nabla n\cdot uds\le&C\sigma^h\|F\|_{L^2(\partial\Omega)}\|u\|_{L^4(\partial\Omega)}^2
	\le C\sigma^h\|\nabla F\|_{L^2}\|\nabla u\|_{L^2}^2\\
	\le&\delta\sigma^h\|(\rho+m)^{\frac{1}{2}}\dot{u}\|^2_{L^2}+CC_0.
\end{align*}
Then taking $h=3$ and choosing enough small $\delta$, we obtain \eqref{3.13}.
\end{proof}
\begin{lemma}\label{Lm3.4}
	Suppose that $(\rho,m,u)$ is a smooth solution of \eqref{1.1}--\eqref{1.5} on $\Omega\times(0,T]$ satisfying \eqref{3.5}. Then there exists a positive constant C depending only on $\lambda$, $\mu$, $\bar{\rho}$, $\bar{m}$, M and $\Omega$ such that
	\begin{equation}\label{3.28}
		A_3(\sigma(T))+\int_{0}^{\sigma(T)}\|(\rho+m)^{\frac{1}{2}}\dot{u}\|_{L^2}^2dt\le M,
	\end{equation}
	provided $C_0\le\varepsilon_1$.
\end{lemma}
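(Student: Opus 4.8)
The plan is to run a Hoff--type differential--inequality argument for the ``viscous energy''
\[
\Phi(t)\triangleq\frac{\lambda+2\mu}{2}\|{\rm div}u\|_{L^2}^2+\frac{\mu}{2}\|{\rm curl}u\|_{L^2}^2-\int(P-P_\infty){\rm div}u\,dx
\]
on the short interval $[0,\sigma(T)]$, i.e.\ to exploit exactly the case $h=0$ of estimate \eqref{3.18}. When $h=0$ the dangerous factor $h\sigma^{h-1}\sigma'$ in \eqref{3.18} is absent, so \eqref{3.18} becomes
\[
\Phi'(t)+\|(\rho+m)^{\frac{1}{2}}\dot u\|_{L^2}^2\le\delta\|(\rho+m)^{\frac{1}{2}}\dot u\|_{L^2}^2+C\big(\|\nabla u\|_{L^2}^2+\|\nabla u\|_{L^2}^4+\|\nabla u\|_{L^3}^3\big).
\]
I would integrate this over $(0,t)$ for $t\le\sigma(T)\le1$, absorb the $\delta$--term, and use \eqref{1.21}, \eqref{1.4} in the form $\|\nabla u\|_{L^2}^2=\|{\rm div}u\|_{L^2}^2+\|{\rm curl}u\|_{L^2}^2-\int_{\partial\Omega}u\cdot\nabla n\cdot u\,ds$, so that $\Phi(t)$ is coercive in $\|\nabla u(t)\|_{L^2}^2$ modulo a pressure term and a boundary curvature term.

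For the data term, Young's inequality, the elementary bound $\|P-P_\infty\|_{L^2}^2\le C\int G(\rho,m)\,dx$ (valid since $0\le\rho\le2\bar\rho$, $0\le m\le2\bar m$ by \eqref{3.5}), and Lemma \ref{Lm3.2} give $\Phi(0)\le\tfrac34 M+CC_0$ via \eqref{1.9}, while $\big|\int(P-P_\infty){\rm div}u(t)\,dx\big|\le\eta\|{\rm div}u(t)\|_{L^2}^2+CC_0$, the $\eta$--piece being absorbed on the left. The three time integrals are $O(C_0)$ with $M$--dependent constants: $\int_0^t\|\nabla u\|_{L^2}^2\,ds\le CC_0$ by Lemma \ref{Lm3.2}, and $\int_0^t\|\nabla u\|_{L^2}^4\,ds\le A_3(\sigma(T))\int_0^t\|\nabla u\|_{L^2}^2\,ds\le CMC_0$ by \eqref{3.5}. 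For the cubic term I would use \eqref{2.15} with $p=3$, namely $\|\nabla u\|_{L^3}\le C(\|(\rho+m)\dot u\|_{L^2}+\|P-P_\infty\|_{L^6})^{1/2}\|\nabla u\|_{L^2}^{1/2}+C\|\nabla u\|_{L^2}$, and then, after cubing and applying Young's inequality together with $\|(\rho+m)\dot u\|_{L^2}\le C\|(\rho+m)^{\frac{1}{2}}\dot u\|_{L^2}$,
\begin{align*}
\int_0^t\|\nabla u\|_{L^3}^3\,ds&\le\epsilon\int_0^t\|(\rho+m)^{\frac{1}{2}}\dot u\|_{L^2}^2\,ds+C\int_0^t\|\nabla u\|_{L^2}^6\,ds\\
&\quad+C\int_0^t\|P-P_\infty\|_{L^6}^{\frac{3}{2}}\|\nabla u\|_{L^2}^{\frac{3}{2}}\,ds+C\int_0^t\|\nabla u\|_{L^2}^3\,ds,
\end{align*}
where the first term is absorbed on the left, $\int_0^t\|\nabla u\|_{L^2}^6\,ds\le A_3(\sigma(T))^2\int_0^t\|\nabla u\|_{L^2}^2\,ds\le CM^2C_0$ by \eqref{3.5}, the pressure term is handled by $\|P-P_\infty\|_{L^6}\le\|P-P_\infty\|_{L^2}^{1/3}\|P-P_\infty\|_{L^\infty}^{2/3}\le CC_0^{1/6}$ together with H\"older in time and $\sigma(T)\le1$ (giving $\le CC_0$), and the last term is $\le CM^{1/2}C_0$.

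Collecting everything, one arrives at an estimate of the form $A_3(\sigma(T))+\int_0^{\sigma(T)}\|(\rho+m)^{\frac{1}{2}}\dot u\|_{L^2}^2\,dt\le\tfrac34 M+C(1+M^2)C_0$, and choosing $\varepsilon_1$ small enough --- depending on $\lambda,\mu,\gamma,\alpha,\Omega,\bar\rho,\bar m$ and $M$ --- that $C(1+M^2)\varepsilon_1\le\tfrac14 M$ closes the estimate at $\le M$. I expect the principal obstacle to be the cubic term $\int_0^{\sigma(T)}\|\nabla u\|_{L^3}^3\,dt$: it carries no small prefactor and is not controlled by the basic energy alone, so one is forced to spend part of the dissipation $\int\|(\rho+m)^{\frac{1}{2}}\dot u\|_{L^2}^2$ on it through \eqref{2.15} and then verify that every remaining contribution genuinely carries a positive power of $C_0$. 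A secondary delicacy is the bookkeeping required to land at exactly $M$ rather than a multiple of it, which uses $M\ge1$ (so that the $M$--weighted $C_0$--errors are of lower order) together with the near--orthogonality of ${\rm div}u$ and ${\rm curl}u$ afforded by the slip boundary condition \eqref{1.4}.
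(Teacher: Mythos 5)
Your proposal follows the paper's own proof essentially verbatim: take $h=0$ in \eqref{3.18}, integrate over $(0,\sigma(T)]$, bound the quartic and sextic powers of $\|\nabla u\|_{L^2}$ using $A_3(\sigma(T))\le 2M$ from \eqref{3.5} together with the energy bound \eqref{3.7}, handle the cubic term through \eqref{2.15} with Young's inequality so that part of it is absorbed into the dissipation $\int_0^{\sigma(T)}\|(\rho+m)^{1/2}\dot u\|_{L^2}^2dt$, and close by choosing $C_0\le\varepsilon_1$ small depending on $M$. The remaining differences are cosmetic only (your $\tfrac34 M+CC_0$ data bound versus the paper's $\tfrac M2$, your interpolation bound for $\|P-P_\infty\|_{L^6}$ where the paper uses Young's inequality to reduce to $\|P-P_\infty\|_{L^2}^2\le CC_0$, and your explicit div--curl boundary identity where the paper simply invokes Lemma \ref{Lm2.6}).
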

\begin{proof}
	Taking $h=0$ in \eqref{3.18}, integrating over $(0,\sigma(T)]$, and using Lemma \ref{Lm2.6}, \eqref{2.15}, \eqref{3.5} and \eqref{3.7}, we can obtain
	\begin{equation}\label{3.29}
		\begin{aligned}
			&\|\nabla u\|_{L^2}^2+\int_{0}^{\sigma(T)}\|(\rho+m)^{\frac{1}{2}}\dot{u}\|_{L^2}^2dt\\
			&\le\frac{M}{2}+C\int_{0}^{\sigma(T)}(\|\nabla u\|_{L^2}^2+\|\nabla u\|_{L^2}^4+\|\nabla u\|_{L^3}^3)dt+\delta\|\nabla u\|_{L^2}^2+C\|P\!-\!{P}_\infty\|_{L^2}^2+CC_0\\
			&\le\frac{M}{2}+CC_0(1+M)+\delta\|\nabla u\|_{L^2}^2+C\!\!\int_{0}^{\sigma(T)}\!\!(\|\nabla{u}\|_{L^2}^\frac{3}{2}(\|(\rho+m)\dot{u}\|_{L^2}+\|P\!-\!{P}_\infty\|_{L^6})^{\frac{3}{2}}+\|\nabla u\|_{L^2}^{3})dt\\
			&\le\frac{M}{2}+CC_0(1+M)+\delta\|\nabla u\|_{L^2}^2+\delta\int_{0}^{\sigma(T)}\!\!\|(\rho+m)^{\frac{1}{2}}\dot{u}\|_{L^2}^2dt+C\!\!\int_{0}^{\sigma(T)}\!\!(\|\nabla u\|_{L^2}^6+\|P-{P}_\infty\|_{L^2}^2)dt\\
			&\le\frac{M}{2}+CC_0(1+M^2)+\delta\|\nabla u\|_{L^2}^2+\delta\int_{0}^{\sigma(T)}\|(\rho+m)^{\frac{1}{2}}\dot{u}\|_{L^2}^2dt.
		\end{aligned}	
	\end{equation}
	If we choose $\delta$ small enough, \eqref{3.29} gives
	\begin{equation*}
		A_3(\sigma(T))+\int_{0}^{\sigma(T)}||(\rho+m)^{\frac{1}{2}}\dot{u}||_{L^2}^2dt\le\frac{M}{2}+CC_0+CM^2C_0\le M,
	\end{equation*}
	provided $C_0\le\varepsilon_1\triangleq\left\{1,\frac{M}{4C},\frac{1}{4MC}\right\}$.
\end{proof}
\begin{lemma}\label{Lm3.3.1}
	Suppose that $(\rho,m,u)$ is a smooth solution of \eqref{1.1}--\eqref{1.5} on $\Omega\times(0,T]$ satisfying \eqref{3.5}. Then there exists a positive constant C depending only on $\lambda$, $\mu$, $\bar{\rho}$, $\bar{m}$, M and $\Omega$ such that
	\begin{equation}\label{3.3.1}
		\mathop{\rm sup}_{0\le t\le\sigma(T)}t\|(\rho+m)^{\frac{1}{2}}\dot{u}\|_{L^2}^2+\int_{0}^{\sigma(T)}\!\!t\|\nabla\dot{u}\|_{L^2}^2dt\le C.
	\end{equation}
\end{lemma}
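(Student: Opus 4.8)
The plan is to run a time-weighted energy estimate for the material derivative $\dot u$, starting from the differential inequality already established inside the proof of Lemma \ref{Lm3.3}. On $[0,\sigma(T)]$ one has $\sigma(t)=t$, so I would take $h=1$ in \eqref{3.26} and integrate it over $(0,s]$ for an arbitrary $s\in(0,\sigma(T)]$; the contributions at the lower endpoint vanish as $t\to0^+$ because the weight $t$ multiplies quantities that are bounded near $t=0$ by the local regularity in Lemma \ref{Lm2.1}. Moving the boundary term to the right, this yields
\[ s\|(\rho+m)^{\frac12}\dot u(s)\|_{L^2}^2+\int_0^s t\|\nabla\dot u\|_{L^2}^2\,dt\le -s\int_{\partial\Omega}Fu\cdot\nabla n\cdot u\,ds\Big|_{t=s}+\int_0^s\big(\text{RHS of }\eqref{3.26}\text{ with }h=1\big)\,dt . \]
I would then take the supremum over $s\in(0,\sigma(T)]$ and show the right-hand side is bounded by a constant (depending only on $\lambda,\mu,\bar\rho,\bar m,M,\Omega$) plus a small multiple of the left-hand side, which then gets absorbed.

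The boundary term at $t=s$ is controlled by the trace inequality, \eqref{2.11}, the density bound $\rho+m\le 2(\bar\rho+\bar m)$ from \eqref{3.5}, and Young's inequality:
\[ \Big|s\int_{\partial\Omega}Fu\cdot\nabla n\cdot u\,ds\Big|\le Cs\|\nabla F\|_{L^2}\|\nabla u\|_{L^2}^2\le Cs\|(\rho+m)^{\frac12}\dot u\|_{L^2}\|\nabla u\|_{L^2}^2\le \delta\, s\|(\rho+m)^{\frac12}\dot u\|_{L^2}^2+CM^2 , \]
using $\sup_{[0,\sigma(T)]}\|\nabla u\|_{L^2}^2\le M$ from Lemma \ref{Lm3.4}; the $\delta$-term joins the left-hand side. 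Of the terms on the right-hand side of \eqref{3.26}, the ones of the form $C\|(\rho+m)^{\frac12}\dot u\|_{L^2}\|\nabla u\|_{L^2}^2$ and $Ct\big(\|(\rho+m)^{\frac12}\dot u\|_{L^2}^2(1+\|\nabla u\|_{L^2}^4)+\|\nabla u\|_{L^2}^2+\|\nabla u\|_{L^2}^6\big)$ are bounded directly: since $t\le1$ and $\|\nabla u\|_{L^2}^2\le M$ on $[0,\sigma(T)]$, their time integrals are all $\le C$ — the quadratic-in-$\dot u$ ones because $\int_0^{\sigma(T)}\|(\rho+m)^{\frac12}\dot u\|_{L^2}^2\,dt\le M$ by Lemma \ref{Lm3.4}, the linear-in-$\dot u$ one by Cauchy--Schwarz in $t$, and the remaining ones by $\sigma(T)\le1$.

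The only genuinely delicate term is $\int_0^{\sigma(T)}t\|\nabla u\|_{L^4}^4\,dt$, and this is where I expect the main obstacle. I would apply \eqref{2.15} with $p=4$, together with $\rho+m\le C$ and the boundedness of $\|P-P_\infty\|_{L^6}$ (from the density bounds and the energy estimate \eqref{3.7}), to get $t\|\nabla u\|_{L^4}^4\le Ct\|(\rho+m)^{\frac12}\dot u\|_{L^2}^3\|\nabla u\|_{L^2}+Ct\big(\|\nabla u\|_{L^2}+\|\nabla u\|_{L^2}^4\big)$, whose last two pieces integrate to $\le C$. For the cubic piece I would factor $t\|(\rho+m)^{\frac12}\dot u\|_{L^2}^3=\big(t^{1/2}\|(\rho+m)^{\frac12}\dot u\|_{L^2}\big)\big(t^{1/2}\|(\rho+m)^{\frac12}\dot u\|_{L^2}^2\big)$, bound the first factor by $\big(\sup_{[0,\sigma(T)]}t\|(\rho+m)^{\frac12}\dot u\|_{L^2}^2\big)^{1/2}$ and the second by $\|(\rho+m)^{\frac12}\dot u\|_{L^2}^2$ (since $t\le1$), and use $\|\nabla u\|_{L^2}\le M^{1/2}$ together with Lemma \ref{Lm3.4} to obtain
\[ \int_0^{\sigma(T)}t\|(\rho+m)^{\frac12}\dot u\|_{L^2}^3\|\nabla u\|_{L^2}\,dt\le CM^{3/2}\Big(\sup_{[0,\sigma(T)]}t\|(\rho+m)^{\frac12}\dot u\|_{L^2}^2\Big)^{1/2} . \]
Denoting by $\Phi$ the left-hand side of the asserted estimate, the collected bounds give $\Phi\le C+CM^{3/2}\Phi^{1/2}$, and one more application of Young's inequality yields $\Phi\le C$. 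The hard part is precisely closing this self-referential $\|\nabla u\|_{L^4}^4$ term, which needs both the elliptic bound \eqref{2.15} expressing $\nabla u$ in terms of $(\rho+m)\dot u$ and the time-integral control of $\|(\rho+m)^{\frac12}\dot u\|_{L^2}^2$ supplied by Lemma \ref{Lm3.4}.
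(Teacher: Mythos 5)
Your proposal is correct and follows essentially the same route as the paper: take $h=1$ in \eqref{3.26}, integrate in time, absorb the boundary term via \eqref{2.11} and Young's inequality, and control the delicate $\int t\|\nabla u\|_{L^4}^4\,dt$ term through \eqref{2.15}, splitting the cubic $\dot u$ factor so that one power carries the weight $t^{1/2}$ and is bounded by the supremum being estimated while the rest is integrated using Lemma \ref{Lm3.4}, then closing by absorption. The only cosmetic difference is that you phrase the final step as $\Phi\le C+CM^{3/2}\Phi^{1/2}$ with Young's inequality, whereas the paper writes it directly as a $\delta$-absorption; these are the same argument.
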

\begin{proof}
	Taking $h=1$ in \eqref{3.26}, and integrating over $(0,\sigma(T)]$,  by \eqref{3.5}, \eqref{3.28} and \eqref{2.15}, we have
\begin{align*}
	&\mathop{\rm sup}_{0\le t\le\sigma(T)}t\|(\rho+m)^{\frac{1}{2}}\dot{u}\|_{L^2}^2+\int_{0}^{\sigma(T)}\!\!t\|\nabla\dot{u}\|_{L^2}^2dt\\
	&\le C\int_{0}^{\sigma(T)}\!\!t(\|(\rho+m)^{\frac{1}{2}}\dot{u}\|_{L^2}^2(1+\|\nabla u\|_{L^2}^4)+\|\nabla u\|_{L^2}^2+\|\nabla u\|_{L^6}^2+\|\nabla u\|_{L^4}^4)dt\\
	&\quad+C\int_{0}^{\sigma(T)}\|(\rho+m)^{\frac{1}{2}}\dot{u}\|_{L^2}\|\nabla u\|_{L^2}^2dt+Ct\|F\|_{L^2(\partial\Omega)}\|u\|_{L^4(\partial\Omega)}^2\\
	&\le C+C\int_{0}^{\sigma(T)}\!\![t(\|(\rho+m)\dot{u}\|_{L^2}+\|P-P_{\infty}\|_{L^6})^3\|\nabla u\|_{L^2}+t\|\nabla u\|_{L^2}^4]dt+Ct\|\nabla F\|_{L^2}\|\nabla u\|_{L^2}\\
	&\le C+C\mathop{\rm sup}_{0\le t\le\sigma(T)}(t\|(\rho+m)^{\frac{1}{2}}\dot{u}\|_{L^2}^2)^{\frac{1}{2}}\mathop{\rm sup}_{0\le t\le\sigma(T)}(\|\nabla u\|_{L^2}^2)^{\frac{1}{2}}\int_{0}^{\sigma(T)}\!\!\|(\rho+m)^{\frac{1}{2}}\dot{u}\|_{L^2}^2dt\\
&\quad+Ct\|(\rho+m)\dot{u}\|_{L^2}\|\nabla u\|_{L^2}^2\\
	&\le C+\delta\mathop{\rm sup}_{0\le t\le\sigma(T)}t\|(\rho+m)^{\frac{1}{2}}\dot{u}\|_{L^2}^2,
\end{align*}
which  gives \eqref{3.3.1} when we choose enough small $\delta$.
\end{proof}
\begin{lemma}\label{Lm3.6}
	If $(\rho,m,u)$ is a smooth solution of \eqref{1.1}--\eqref{1.5} on $\Omega\times(0, T]$ satisfying the assumption \eqref{3.5}, then it holds that	
	\begin{equation}\label{3.30}
		\int_{0}^{T}\sigma^3\|P-{P}_\infty\|_{L^4}^4dt\le CC_0.
	\end{equation}
\end{lemma}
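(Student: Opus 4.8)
\emph{Approach.} I would close a weighted $L^4$ estimate for $\bar P:=P-P_\infty$ starting from the transport equation satisfied by the pressure, and use the effective viscous flux $F$ of \eqref{1.19} to control the lower-order terms.

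From $\eqref{1.1}_1$ and $\eqref{1.1}_2$ one derives $\bar P_t+u\cdot\nabla\bar P+(\gamma\rho^\gamma+\alpha m^\alpha)\,{\rm div}u=0$; writing ${\rm div}u=\frac{F+\bar P}{\lambda+2\mu}$ via \eqref{1.19}, multiplying by $4|\bar P|^2\bar P$, integrating over $\Omega$ and integrating the convective term by parts (using $u\cdot n|_{\partial\Omega}=0$ and the decay \eqref{1.5}) gives
\begin{equation*}
\frac{d}{dt}\|\bar P\|_{L^4}^4+\frac{1}{\lambda+2\mu}\int|\bar P|^4\bigl(4(\gamma\rho^\gamma+\alpha m^\alpha)-\bar P\bigr)dx=-\frac{1}{\lambda+2\mu}\int|\bar P|^2\bar P\bigl(4(\gamma\rho^\gamma+\alpha m^\alpha)-\bar P\bigr)F\,dx.
\end{equation*}
The key algebraic fact is that, since $\gamma>1$, $\alpha\ge1$ and $P_\infty=\rho_\infty^\gamma+m_\infty^\alpha$,
\[4(\gamma\rho^\gamma+\alpha m^\alpha)-\bar P=(4\gamma-1)\rho^\gamma+(4\alpha-1)m^\alpha+P_\infty\ \ge\ |\bar P|\ \ge\ 0,\]
so the second term on the left is a genuine dissipation $D$, with $D\ge\frac{1}{\lambda+2\mu}\|\bar P\|_{L^5}^5$ in general and $D\ge\frac{P_\infty}{\lambda+2\mu}\|\bar P\|_{L^4}^4$ when $P_\infty>0$. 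For the right-hand side I would use \eqref{3.5} to bound $|\bar P|$ and $\gamma\rho^\gamma+\alpha m^\alpha$, factor $\sqrt D$ out by Cauchy--Schwarz, and absorb it by Young's inequality, reducing the forcing to a constant multiple of $\int|\bar P|^2|F|^2dx$; by Hölder, \eqref{2.1}, \eqref{2.11} and $F=(\lambda+2\mu){\rm div}u-\bar P$ this is $\le C\|\bar P\|_{L^6}^2(\|\nabla u\|_{L^2}+\|\bar P\|_{L^2})\|(\rho+m)^{1/2}\dot u\|_{L^2}$, and $\|\bar P\|_{L^6}^2\le C\|\bar P\|_{L^\infty}^{4/3}\|\bar P\|_{L^2}^{2/3}\le CC_0^{1/3}$ by \eqref{3.5} and \eqref{3.7}.

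Multiplying the resulting differential inequality by $\sigma^3$ and integrating over $(0,T)$ (using $\sigma(0)=0$ and discarding the nonnegative boundary contribution at $t=T$), the term $3\int_0^T\sigma^2\sigma'\|\bar P\|_{L^4}^4dt$ is $\le CC_0$ because $\|\bar P\|_{L^4}^4\le\|\bar P\|_{L^\infty}^2\|\bar P\|_{L^2}^2\le CC_0$ by \eqref{3.5} and \eqref{3.7} and $\int_0^{\sigma(T)}t^2dt\le\frac13$, while the $F$-dependent integral is bounded by $CC_0$ by splitting $(0,T)=(0,\sigma(T))\cup(\sigma(T),T)$ and using \eqref{3.7}, \eqref{3.5}, \eqref{3.28} and \eqref{3.3.1}, together with interpolations of the $\sigma$-powers such as $\sigma^{5/2}\|(\rho+m)^{1/2}\dot u\|_{L^2}^{3}=(\sigma^{3}\|(\rho+m)^{1/2}\dot u\|_{L^2}^{2})^{1/2}\,\sigma\|(\rho+m)^{1/2}\dot u\|_{L^2}^{2}$ on $(0,\sigma(T))$, where $\sigma^{3}\|(\rho+m)^{1/2}\dot u\|_{L^2}^{2}$ is bounded by \eqref{3.5} and $\int\sigma\|(\rho+m)^{1/2}\dot u\|_{L^2}^{2}$ by \eqref{3.5}, and $\sigma\equiv1$ on $(\sigma(T),T)$. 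When $P_\infty>0$ this gives \eqref{3.30} directly; when $P_\infty=0$ one has $\bar P=P\ge0$, hence $\int\bar P\,dx\le C\int G\,dx\le CC_0$ and therefore $\|\bar P\|_{L^{3/2}}\le CC_0^{2/3}$, and the same scheme run with the $L^2$-multiplier $2\bar P$ — whose dissipation obeys $2(\gamma\rho^\gamma+\alpha m^\alpha)-\bar P\ge|\bar P|$ and hence dominates $\|\bar P\|_{L^3}^3\ge c\|\bar P\|_{L^4}^4$ — closes the estimate, since then the $F$-forcing is $\le C\|\bar P\|_{L^{3/2}}\|F\|_{L^6}^2\le CC_0^{2/3}\|(\rho+m)^{1/2}\dot u\|_{L^2}^2$, free of the non-decaying factor $\|\bar P\|_{L^2}$. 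The main difficulty throughout is precisely this last point: every piece of the $F$-forcing must be arranged so that, after weighting by $\sigma^3$ and integrating, it produces only small or time-integrable quantities (never a term that is merely bounded), which forces one to use the dissipation coming from the pressure nonlinearity together with the a priori bounds \eqref{3.5}, \eqref{3.28}, \eqref{3.3.1} in combination.
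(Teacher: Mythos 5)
Your energy identity for $\bar P=P-P_\infty$ with the multiplier $4|\bar P|^2\bar P$, the sign observation $4(\gamma\rho^\gamma+\alpha m^\alpha)-\bar P\ge|\bar P|$, and the reduction of the forcing to $C\int|\bar P|^2|F|^2dx$ after Cauchy--Schwarz/Young against the dissipation are all correct, and the overall strategy (pressure transport equation plus the substitution ${\rm div}u=\frac{F+\bar P}{2\mu+\lambda}$) is the same family as the paper's. The gap is in how you then estimate $\int|\bar P|^2|F|^2dx$ in the main branch $P_\infty>0$. Your H\"older split gives $\|\bar P\|_{L^6}^2\|F\|_{L^3}^2\le CC_0^{1/3}\bigl(\|\nabla u\|_{L^2}+\|\bar P\|_{L^2}\bigr)\|(\rho+m)^{1/2}\dot u\|_{L^2}$, which is only \emph{linear} in $\|(\rho+m)^{1/2}\dot u\|_{L^2}$. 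The piece $C_0^{1/3}\|\bar P\|_{L^2}\|(\rho+m)^{1/2}\dot u\|_{L^2}\le CC_0^{5/6}\|(\rho+m)^{1/2}\dot u\|_{L^2}$ has a coefficient that does not decay in time, and under \eqref{3.5} one only controls $\int_0^T\sigma\|(\rho+m)^{1/2}\dot u\|_{L^2}^2dt\le A_1(T)\le 2C_0^{1/2}$ and $\sup\sigma^3\|(\rho+m)^{1/2}\dot u\|_{L^2}^2\le 2C_0^{1/2}$; \eqref{3.28} and \eqref{3.3.1} live on $[0,\sigma(T)]$ and give nothing for $t\ge 1$. Hence on $(\sigma(T),T)$ the best you can say is $\int_1^T\|(\rho+m)^{1/2}\dot u\|_{L^2}dt\le (T-1)^{1/2}\bigl(\int_1^T\|(\rho+m)^{1/2}\dot u\|_{L^2}^2dt\bigr)^{1/2}$, which grows with $T$, whereas \eqref{3.30} must hold with a constant independent of $T$ (it is used both to close the bootstrap and for the large--time behavior). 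Note also that the interpolation you quote, $\sigma^{5/2}\|(\rho+m)^{1/2}\dot u\|_{L^2}^3=(\sigma^3\|(\rho+m)^{1/2}\dot u\|_{L^2}^2)^{1/2}\sigma\|(\rho+m)^{1/2}\dot u\|_{L^2}^2$, matches a forcing that is \emph{cubic} in $\dot u$ (i.e.\ a $\|F\|_{L^4}^4$--type bound) and not the bound you actually derived, so the cited estimates do not close your chain.

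The fix is to keep two "decaying" factors in the forcing, exactly as the paper does: either bound $\int|\bar P|^2F^2dx\le\delta\|\bar P\|_{L^4}^4+C(\delta)\|F\|_{L^4}^4$ and absorb the first term, then use \eqref{2.13} with $p=4$ so that $\sigma^3\|F\|_{L^4}^4\le C\bigl(\sigma^3\|(\rho+m)^{1/2}\dot u\|_{L^2}^2\bigr)^{1/2}\bigl(\sigma\|\nabla u\|_{L^2}^2\bigr)^{1/2}\sigma\|(\rho+m)^{1/2}\dot u\|_{L^2}^2+CC_0^{1/2}\sigma^2\|(\rho+m)^{1/2}\dot u\|_{L^2}^2\cdot\bigl(\sigma^3\|(\rho+m)^{1/2}\dot u\|_{L^2}^2\bigr)^{1/2}$, which integrates to $CC_0$ by \eqref{3.5}. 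Better still, the paper's own route avoids your case distinction altogether: differentiating $\int\bar P^3dx$ and substituting ${\rm div}u=\frac{F+\bar P}{2\mu+\lambda}$ only in the convected term $-\int{\rm div}u\,\bar P^3dx$ produces the good term $\frac{1}{2\mu+\lambda}\int\bar P^4dx$ directly, with the correct sign regardless of the sign of $\bar P$ or the value of $P_\infty$, so no dissipation lower bound proportional to $P_\infty$ (and hence no constant of size $1/P_\infty$, and no separate $P_\infty=0$ argument via $\|\bar P\|_{L^{3/2}}$) is needed; the remaining terms are exactly $\sigma^3\int\bar P^2(\gamma\rho^\gamma+\alpha m^\alpha){\rm div}u\,dx$ and $\sigma^3\int F\bar P^3dx$, both handled with a $\delta\sigma^3\|\bar P\|_{L^4}^4$ absorption, $\int_0^T\|\nabla u\|_{L^2}^2dt\le CC_0$, and the $\sigma^3\|F\|_{L^4}^4$ estimate above.
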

\begin{proof}
A direct computation shows that
\begin{equation}\label{3.31}
\begin{aligned}
			&-\left(\int(P-{P}_\infty)^3dx\right)_t\\&=-3\int(P-{P}_\infty)^2P_tdx\\
			&=3\int(P-{P}_\infty)^2(\gamma\rho^\gamma+\alpha m^\alpha){\rm div}udx+3\int(P-{P}_\infty)^2u\cdot\nabla Pdx\\
			&=3\int(P-{P}_\infty)^2(\gamma\rho^\gamma+\alpha m^\alpha){\rm div}udx-\int{\rm div}u(P-P_\infty)^3dx\\
			&=3\int(P-{P}_\infty)^2(\gamma\rho^\gamma+\alpha m^\alpha){\rm div}udx-\int\frac{F}{2\mu+\lambda}(P-{P}_\infty)^3dx-\frac{1}{2\mu+\lambda}\int(P-{P}_\infty)^4dx,
\end{aligned}
\end{equation}
which indicates that
\begin{equation}\label{3.32}
\begin{aligned}
			\frac{1}{2\mu+\lambda}\sigma^3\int(P-{P}_\infty)^4dx=&\left(\sigma^3\int(P-{P}_\infty)^3dx\right)_t-3\sigma^2\sigma'\int(P-{P}_\infty)^3dx\\
			&+3\sigma^3\!\!\int({P\!-\!P_\infty})^2(\gamma\rho^\gamma+\alpha m^\alpha){\rm div}udx-\frac{\sigma^3}{2\mu+\lambda}\int F(P\!-\!{P}_\infty)^3dx.
\end{aligned}
\end{equation}
Combining \eqref{3.32}, \eqref{2.13}, \eqref{3.5} and \eqref{3.7} implies that
\begin{equation*}
\begin{aligned}
			&\int_{0}^{T}\sigma^3\|P-{P}_\infty\|_{L^4}^4dt\\
			&\le\sigma^3\|P\!-\!{P}_\infty\|_{L^3}^3+C\int_{0}^{T}\!\!\sigma'\|P\!-\!{P}_\infty\|_{L^2}^2dt+\delta\!\!\int_{0}^{T}\!\!\sigma^3\|P\!-\!{P}_\infty\|_{L^4}^4dt+C\!\!\int_{0}^{T}(\|\nabla u\|_{L^2}^2+\sigma^3\|F\|_{L^4}^4)dt\\
			&\le\delta\int_{0}^{T}\sigma^3\|P-{P}_\infty\|_{L^4}^4dt+CC_0+C\int_{0}^{T}\sigma^3(\|\nabla u\|_{L^2}+\|P-P_\infty\|_{L^2})\|(\rho+m)^{\frac{1}{2}}\dot{u}\|_{L^2}^3dt\\
			&\le\delta\int_{0}^{T}\!\!\sigma^3\|P\!-\!{P}_\infty\|_{L^4}^4dt+CC_0+C\int_{0}^{\sigma(T)}\!\!\left(\sigma^3\|(\rho+m)^{\frac{1}{2}}\dot{u}\|_{L^2}^2\right)^{\frac{1}{2}}\left(\sigma\|\nabla u\|_{L^2}^2\right)^{\frac{1}{2}}\sigma\|(\rho+m)^{\frac{1}{2}}\dot{u}\|_{L^2}^2dt\\
			&\quad+CC_0^{\frac{1}{2}}\int_{0}^{\sigma(T)}\!\!\left(\sigma^3\|(\rho+m)^{\frac{1}{2}}\dot{u}\|_{L^2}^2\right)^{\frac{1}{2}}\sigma\|(\rho+m)^{\frac{1}{2}}\dot{u}\|_{L^2}^2dt\\
			&\le\delta\int_{0}^{T}\sigma^3\|P-{P}_0\|_{L^4}^4dt+CC_0,
\end{aligned}
\end{equation*}
which yields \eqref{3.30} when we choose enough small $\delta$.
\end{proof}
\begin{lemma}\label{Lm3.7}
Suppose $(\rho,m,u)$ is a smooth solution of \eqref{1.1}--\eqref{1.5} on $\Omega\times(0, T]$ satisfying \eqref{3.5}. Then there exists a positive constant C depending only on $\lambda$, $\mu$, $\gamma$, $\alpha$, M, $\Omega$, $\bar{\rho}$ and $\bar{m}$ such that
\begin{equation}\label{3.33}
	A_1(T)+A_2(T)\le C_0^{\frac{1}{2}},
\end{equation}
provided $C_0\le\varepsilon_2$.
\end{lemma}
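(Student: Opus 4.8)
The plan is to run the final bootstrap: feed the bounds \eqref{3.12} and \eqref{3.13} of Lemma \ref{Lm3.3} into one another after showing that the two supercritical error integrals $\int_0^T\sigma\|\nabla u\|_{L^3}^3\,dt$ and $\int_0^T\sigma^3\|\nabla u\|_{L^4}^4\,dt$ are bounded by $\delta\bigl(A_1(T)+A_2(T)\bigr)+CC_0$, for $\delta$ arbitrarily small, once $C_0$ is small. First I would record the elementary consequences of the standing hypotheses: the energy estimate \eqref{3.7} gives $\int_0^T\|\nabla u\|_{L^2}^2\,dt\le CC_0$ and $\sup_t\|P-P_\infty\|_{L^2}^2\le CC_0$; since $\rho$, $m$ and hence $P$ are bounded by \eqref{3.5}, also $\|P-P_\infty\|_{L^6}\le CC_0^{1/6}$ and $\|(\rho+m)\dot u\|_{L^2}\le C\|(\rho+m)^{1/2}\dot u\|_{L^2}$; Lemma \ref{Lm3.6} gives $\int_0^T\sigma^3\|P-P_\infty\|_{L^4}^4\,dt\le CC_0$; Lemma \ref{Lm3.4} gives $\sup_{[0,\sigma(T)]}\|\nabla u\|_{L^2}^2\le 2M$ and $\int_0^{\sigma(T)}\|(\rho+m)^{1/2}\dot u\|_{L^2}^2\,dt\le M$; and Lemma \ref{Lm3.3.1} gives $\sup_{[0,\sigma(T)]}t\|(\rho+m)^{1/2}\dot u\|_{L^2}^2\le C$.

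The workhorse is \eqref{2.15}, which at $p=3$ and $p=4$ gives $\|\nabla u\|_{L^3}^3\le C\bigl(\|(\rho+m)^{1/2}\dot u\|_{L^2}+\|P-P_\infty\|_{L^6}\bigr)^{3/2}\|\nabla u\|_{L^2}^{3/2}+C\|\nabla u\|_{L^2}^3$ and $\|\nabla u\|_{L^4}^4\le C\bigl(\|(\rho+m)^{1/2}\dot u\|_{L^2}+\|P-P_\infty\|_{L^6}\bigr)^{3}\|\nabla u\|_{L^2}+C\|\nabla u\|_{L^2}^4$. Multiplying by $\sigma$, resp.\ $\sigma^3$, integrating over $(0,T]$ and splitting into $(0,\sigma(T)]$ and $(\sigma(T),T]$, I would bound each term by Hölder's and Young's inequalities, distributing the powers of $\sigma$ so that a factor $\sigma\|(\rho+m)^{1/2}\dot u\|_{L^2}^2$, bounded by $A_1(T)$, on the cubic integral, resp.\ $\sigma^3\|(\rho+m)^{1/2}\dot u\|_{L^2}^2$, bounded by $A_2(T)$, on the quartic one, is always isolated. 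On $(0,\sigma(T)]$ the remaining time integrals collapse to $\int_0^{\sigma(T)}\|\nabla u\|_{L^2}^2\,dt$, $\int_0^{\sigma(T)}\sigma^3\|P-P_\infty\|_{L^4}^4\,dt$, or such an integral times $\sup\|\nabla u\|_{L^2}^2\le 2M$, all $\le CC_0$, modulo an absorbable $\delta A_i(T)$ and, for the worst term $\int_0^{\sigma(T)}\sigma^3\|\nabla u\|_{L^2}\|(\rho+m)^{1/2}\dot u\|_{L^2}^3\,dt$, a contribution $CM^{1/2}C_0^{1/2}A_2(T)$ obtained by isolating $\sup_t\sigma^3\|(\rho+m)^{1/2}\dot u\|_{L^2}^2\le A_2(T)$ and using Lemma \ref{Lm3.4} (Lemma \ref{Lm3.3.1} serves the same purpose here). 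On $(\sigma(T),T]$, where $\sigma\equiv1$, one uses $\sup\|\nabla u\|_{L^2}^2\le A_1(T)\le2C_0^{1/2}$, $\sup\|(\rho+m)^{1/2}\dot u\|_{L^2}^2\le A_2(T)\le2C_0^{1/2}$, $\int_{\sigma(T)}^T\|\nabla u\|_{L^2}^2\,dt\le CC_0$, $\int_{\sigma(T)}^T\|P-P_\infty\|_{L^6}^6\,dt\le C\int_0^T\sigma^3\|P-P_\infty\|_{L^4}^4\,dt\le CC_0$ (since $P$ is bounded and $\sigma\equiv1$ there), and $\int_{\sigma(T)}^T\sigma\|(\rho+m)^{1/2}\dot u\|_{L^2}^2\,dt\le A_1(T)$, so every term is $\delta\bigl(A_1(T)+A_2(T)\bigr)$, $CC_0^{1/2}\bigl(A_1(T)+A_2(T)\bigr)$, or $O(C_0)$. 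Altogether $\int_0^T\sigma\|\nabla u\|_{L^3}^3\,dt+\int_0^T\sigma^3\|\nabla u\|_{L^4}^4\,dt\le \delta\bigl(A_1(T)+A_2(T)\bigr)+CM^{1/2}C_0^{1/2}\bigl(A_1(T)+A_2(T)\bigr)+CC_0$.

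Substituting this into \eqref{3.12} and \eqref{3.13} and adding, one gets $A_1(T)+A_2(T)\le CC_0+C\delta\bigl(A_1(T)+A_2(T)\bigr)+CM^{1/2}C_0^{1/2}\bigl(A_1(T)+A_2(T)\bigr)$; fixing $\delta$ with $C\delta\le\frac{1}{4}$ and then choosing $\varepsilon_2\le\varepsilon_1$ small enough (depending only on $\lambda,\mu,\gamma,\alpha,\Omega,\bar\rho,\bar m,M$) so that $CM^{1/2}C_0^{1/2}\le\frac{1}{4}$, the $A_1(T)+A_2(T)$ terms on the right are absorbed and $A_1(T)+A_2(T)\le CC_0\le C_0^{1/2}$, the last step needing $\varepsilon_2$ shrunk once more so that $CC_0^{1/2}\le1$. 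I expect the main obstacle to be the bookkeeping of $\sigma$-powers on $(0,\sigma(T)]$: the quartic integral produces $\|(\rho+m)^{1/2}\dot u\|_{L^2}^3$, which near $t=0$ is only $\sigma^{-3/2}$-integrable, so one must spend exactly the $\sigma^3$ weight carried by $A_2(T)$; the resulting coefficient $CM^{1/2}C_0^{1/2}$ is absorbable only because $M$ is a fixed constant while $C_0$ may be taken arbitrarily small, so it is the smallness of the initial energy, not of $M$, that ultimately closes the estimate.
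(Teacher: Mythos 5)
Your proposal is correct and follows essentially the paper's own route: combine \eqref{3.12}--\eqref{3.13} with the estimate \eqref{2.15}, the bounds of Lemmas \ref{Lm3.2}, \ref{Lm3.4} and \ref{Lm3.6}, split the time integrals at $\sigma(T)$, and close by smallness of $C_0$ (choosing $\varepsilon_2$ depending on $\bar\rho,\bar m,M$). The only real difference is cosmetic: the paper inserts the bootstrap bounds $A_1(T)+A_2(T)\le 2C_0^{1/2}$ and $A_3(\sigma(T))\le 2M$ directly, obtaining $\int_0^T\sigma^3\|\nabla u\|_{L^4}^4\,dt\le CC_0$ and $\int_0^T\sigma\|\nabla u\|_{L^3}^3\,dt\le CC_0^{2/3}$, and eliminates the $CA_1(T)$ term in \eqref{3.13} by substituting \eqref{3.12} (as your ``substituting into \eqref{3.12} and \eqref{3.13}'' implicitly must, since that term can neither be absorbed nor bounded by the a priori assumption alone), rather than running your $\delta$-absorption argument; both versions yield $A_1(T)+A_2(T)\le CC_0^{\theta}$ with $\theta>\tfrac12$ and hence \eqref{3.33}.
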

\begin{proof}
By \eqref{2.15}, \eqref{3.5}, \eqref{3.7} and \eqref{3.30}, it holds that
\begin{equation}\label{3.34}
	\begin{aligned}
		&\int_{0}^{T}\sigma^3\|\nabla u\|^4_{L^4}dt\\
		&\le C\int_{0}^{T}\sigma^3\|\nabla u\|_{L^2}(\|(\rho+m)^{\frac{1}{2}}\dot{u}\|_{L^2}+\|P-{P}_\infty\|_{L^6})^3dt+C\int_{0}^{T}\sigma^3\|\nabla u\|^4_{L^2}dt\\
		&\le\int_{0}^{T}\left(\sigma^3\|(\rho+m)^{\frac{1}{2}}\dot{u}\|_{L^2}^2\right)^{\frac{1}{2}}\left(\sigma\|\nabla u\|_{L^2}^2\right)^{\frac{1}{2}}\sigma\|(\rho+m)^{\frac{1}{2}}\dot{u}\|_{L^2}^2dt+C\int_{0}^{T}\sigma^3\|P-{P}_\infty\|_{L^6}^6dt\\
		&\quad+C(1+C_0^{\frac{1}{2}})\int_{0}^{T}\|\nabla u|^2_{L^2}dt\\
		&\le CC_0,
	\end{aligned}
\end{equation}
which together with \eqref{3.12} and \eqref{3.13} gives
\begin{equation}\label{3.35}
	A_1(T)+A_2(T)\le CC_0+C\int_{0}^{T}\sigma\|\nabla u\|^3_{L^3}dt.
\end{equation}	
It follows from \eqref{2.15}, \eqref{3.5}, \eqref{3.7} and \eqref{3.28} that
\begin{equation}\label{3.36}
	\begin{aligned}
	&\int_{0}^{\sigma(T)}\sigma\|\nabla u\|^3_{L^3}dt\\
	&\le C\int_{0}^{\sigma(T)}\sigma\|\nabla u\|^{\frac{3}{2}}_{L^2}\left(\|(\rho+m)^{\frac{1}{2}}\dot{u}\|_{L^2}+\|P-{P}_\infty\|_{L^6}\right)^{\frac{3}{2}}dt+C\int_{0}^{\sigma(T)}\sigma\|\nabla u\|^3_{L^2}dt\\
	&\le C\int_{0}^{\sigma(T)}\left(\|P-{P}_\infty\|^6_{L^6}+\|\nabla u\|^2_{L^2}+\sigma^{\frac{4}{3}}\|(\rho+m)^{\frac{1}{2}}\dot{u}\|^2_{L^2}\|\nabla u\|_{L^2}^{\frac{2}{3}}+\|\nabla u\|^4_{L^2}\right)dt+CC_0\\
	&\le CC_0^{\frac{2}{3}}.
	\end{aligned}
\end{equation}
On the other hand, by using \eqref{3.7}, \eqref{3.34} and Young's inequality, we can get
\begin{equation}\label{3.37}
	\begin{aligned}
		\int_{\sigma(T)}^{T}\sigma\|\nabla u\|^3_{L^3}dt&\le C\int_{\sigma(T)}^{T}\sigma\|\nabla u\|_{L^2}\|\nabla u\|_{L^4}^2dt\\
		&\le C\int_{\sigma(T)}^{T}\sigma\|\nabla u\|^2_{L^2}dt+C\int_{\sigma(T)}^{T}\sigma^3\|\nabla u\|^4_{L^4}dt\\
		&\le CC_0.
	\end{aligned}
\end{equation}
By \eqref{3.34}--\eqref{3.37}, we can obtain
\begin{equation}\label{3.38}
	A_1(T)+A_2(T)\le C\left(\bar{\rho},\bar{m},M\right)C_0^{\frac{2}{3}}\le C_0^{\frac{1}{2}},
\end{equation}
which gives \eqref{3.33} provided $C_0\le\varepsilon_2\triangleq\left\{\varepsilon_1,\left(\frac{1}{C\left(\bar{\rho},\bar{m},M\right)}\right)^6\right\}.$
\end{proof}

In order to get all the higher order estimates and to extend the classical solution globally, we must derive a uniform (in time) upper bound of the density.
\begin{lemma}\label{Lm3.8}
If $(\rho,m,u)$ is a smooth solution of \eqref{1.1}--\eqref{1.5} on $\Omega\times(0, T]$ satisfying \eqref{3.5}, then there exists a positive constant $\varepsilon$ depending only on $\lambda$, $\mu$, $\gamma$, $\alpha$, $\rho_\infty$, $m_\infty$, $\Omega$, $M$, $\bar{\rho}$ and $\bar{m}$ such that
\begin{equation}\label{3.39}
	\mathop{{\rm sup}}_{0\le t\le T}\|(\rho+m)(t)\|_{L^\infty}\le\frac{7}{4}\left(\bar{\rho}+\bar{m}\right),
\end{equation}
provided $C_0\le\varepsilon$.
\end{lemma}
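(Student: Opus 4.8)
The plan is to obtain a uniform (in time) pointwise bound for the densities $\rho$ and $m$ by tracking them along particle trajectories and applying the Zlotnik-type inequality of Lemma \ref{Lm2.3}. Since $\eqref{1.1}_1$ and $\eqref{1.1}_2$ are transport equations with the same velocity field, along the flow map $X(t,x)$ determined by $\dot X = u(t,X)$, $X(0,x)=x$, one has $\frac{d}{dt}\rho(t,X) = -\rho\,{\rm div}u$ and similarly for $m$. Using the effective viscous flux $F$ from \eqref{1.19}, one rewrites ${\rm div}u = \frac{1}{\lambda+2\mu}(F + P - P_\infty)$, so that $\rho$ satisfies, along trajectories,
\begin{equation*}
\frac{d}{dt}\rho = -\frac{1}{\lambda+2\mu}\rho\,(P(\rho,m)-P_\infty) - \frac{1}{\lambda+2\mu}\rho F.
\end{equation*}
I would treat the first term as the "good" nonlinear damping term $g(\rho)$ (noting $\rho(P(\rho,m)-P_\infty)\to+\infty$ as $\rho\to\infty$, uniformly in the already-bounded $m\le 2\bar m$, since $P(\rho,m)=\rho^\gamma+m^\alpha$), and the second as $b'(t)$, so that $b(t)=-\frac{1}{\lambda+2\mu}\int_0^t \rho F\,ds$ along the trajectory.

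The heart of the argument is verifying the sublinear growth condition \eqref{2.3} for $b$, i.e. that $b(t_2)-b(t_1)\le N_0 + N_1(t_2-t_1)$ with $N_0$ small (proportional to a power of $C_0$) and $N_1$ controlled by $M,\bar\rho,\bar m$. One splits $[0,T]$ into $[0,\sigma(T)]$ and $[\sigma(T),T]$. On the large-time piece $[\sigma(T),T]$, $\sigma\equiv 1$, and using $\|F\|_{L^\infty}\le C\|F\|_{W^{1,q}}\le C(\|\nabla F\|_{L^q}+\|\nabla F\|_{L^2})\le C(\|(\rho+m)\dot u\|_{L^q}+\|(\rho+m)\dot u\|_{L^2})$ from \eqref{2.11} together with Gagliardo--Nirenberg and the estimates on $A_1(T),A_2(T)\le 2C_0^{1/2}$, one bounds $\int_{\sigma(T)}^{T}\|F\|_{L^\infty}\,ds$ by $C(t_2-t_1) + CC_0^{1/2}$ (this produces the $N_1$ term, with $N_1$ depending only on $M,\bar\rho,\bar m$ after one more interpolation using Lemma \ref{Lm3.2} and Lemma \ref{Lm3.7}). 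On the initial layer $[0,\sigma(T)]$, one instead uses $\int_0^{\sigma(T)}\|F\|_{L^\infty}\,ds \le \left(\int_0^{\sigma(T)} \sigma^{-1}\,ds\right)^{1/2}\cdots$ — more precisely one writes $\|F\|_{L^\infty}\le C\|F\|_{L^6}^{1/2}\|\nabla F\|_{L^6}^{1/2}$ (or the analogous $W^{1,q}$ interpolation) and uses the weighted bounds $\sup_t \sigma\|\nabla u\|_{L^2}^2$, $\int \sigma(\rho+m)|\dot u|^2$, $\int\sigma^3|\nabla\dot u|^2$ from $A_1,A_2$ and Lemma \ref{Lm3.3.1}, together with $\int_0^{\sigma(T)}\sigma^{-1+\theta}\,ds<\infty$ for suitable $\theta>0$, to get $\int_0^{\sigma(T)}\|F\|_{L^\infty}\,ds\le CC_0^{\kappa}$ for some $\kappa>0$; this is absorbed into $N_0$. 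Hence $N_0 = CC_0^{\kappa}$ can be made arbitrarily small by choosing $\varepsilon$ small.

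Finally I would invoke Lemma \ref{Lm2.3}: since $g(\zeta)=-\frac{1}{\lambda+2\mu}\zeta(\zeta^\gamma + m^\alpha - P_\infty)\to-\infty$, and choosing $\hat\zeta$ so that $g(\zeta)\le -N_1$ for $\zeta\ge\hat\zeta$ — here $\hat\zeta$ depends only on $\lambda,\mu,\gamma,\alpha,\bar m, P_\infty$ and $N_1$, hence only on the fixed data — one concludes $\rho(t,X)\le\max\{\bar\rho,\hat\zeta\} + N_0$. The crucial point is that $\hat\zeta$ can be taken strictly less than $\frac{7}{4}\bar\rho$ by exploiting $\bar\rho\ge\rho_\infty+1$ (and likewise for $m$ one runs the identical argument to get $\hat\zeta_m$ below $\frac74\bar m$, using $\bar m\ge m_\infty+1$), so that with $N_0=CC_0^\kappa\le\frac{3}{4}\min\{\bar\rho,\bar m\}\cdot(\text{small})$ we obtain $\sup_{[0,T]}\|\rho+m\|_{L^\infty}\le\frac74(\bar\rho+\bar m)$. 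Taking $\varepsilon$ to be the minimum of $\varepsilon_2$ from Lemma \ref{Lm3.7} and the smallness threshold just described completes the proof. The main obstacle is the delicate handling of the time-integral $\int_0^T\|F\|_{L^\infty}\,ds$ near $t=0$: one must use the singular weights $\sigma^h$ sharply so that the (uniform-in-$T$) bounds in $A_1,A_2$ and Lemma \ref{Lm3.3.1} suffice, and one must make sure the resulting $N_1$ depends only on $M,\bar\rho,\bar m$ and not on $C_0^{-1}$, so that $\hat\zeta$ stays below $\frac74\bar\rho$.
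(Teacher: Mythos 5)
Your overall machinery is the right one (effective viscous flux $F$, trajectories, Zlotnik's Lemma \ref{Lm2.3}, splitting $[0,\sigma(T)]$ from $[\sigma(T),T]$ and using the $\sigma$-weighted bounds in $A_1,A_2$ and Lemma \ref{Lm3.3.1} to make $N_0\sim C_0^{\kappa}$ small, plus a Young splitting on $[\sigma(T),T]$ to produce the linear-in-time term), and this matches the paper's treatment of $b(t)$. The decisive difference is that you apply Zlotnik to $\rho$ and $m$ \emph{separately}, whereas the paper adds $\eqref{1.1}_1$ and $\eqref{1.1}_2$ and applies Zlotnik once to the sum, with $D_t(\rho+m)=-\frac{\rho+m}{2\mu+\lambda}(P-P_\infty)-\frac{\rho+m}{2\mu+\lambda}F$, which is exactly the quantity the lemma bounds.

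The genuine gap is your claim that the threshold $\hat\zeta$ for the $\rho$-equation ``can be taken strictly less than $\frac74\bar\rho$'' and depends only on the fixed data. First, Lemma \ref{Lm2.3} requires $g=g(y)$ autonomous, while your $g(\rho)=-\frac{\rho}{2\mu+\lambda}\left(\rho^\gamma+m^\alpha-P_\infty\right)$ depends on $m(t,X(t))$; to invoke the lemma you must replace it by the worst-case minorant over $m\in[0,2\bar m]$, i.e.\ $m=0$, so the effective damping is $-\frac{\rho}{2\mu+\lambda}\left(\rho^\gamma-P_\infty\right)$ with $P_\infty=\rho_\infty^\gamma+m_\infty^\alpha$. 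This is \emph{positive} (anti-damping) until $\rho^\gamma\sim P_\infty$, so the smallest admissible $\hat\zeta$ is of order $\left(\rho_\infty^\gamma+m_\infty^\alpha\right)^{1/\gamma}$, and nothing ties $m_\infty$ to $\bar\rho$: the hypotheses only give $\bar\rho\ge\rho_\infty+1$, $\bar m\ge m_\infty+1$. Concretely, with $\gamma$ close to $1$, $\alpha$ large, $\rho_\infty=0$, $\bar\rho=1$, $m_\infty$ large, your Zlotnik conclusion for $\rho$ is only $\rho\lesssim m_\infty^{\alpha/\gamma}$, which can exceed $\frac74(\bar\rho+\bar m)$ by an arbitrary factor; so the argument yields neither $\rho\le\frac74\bar\rho$ nor the claimed bound on $\rho+m$. (The same cross-term problem appears symmetrically for $m$ when $\rho$ is near vacuum and $\rho_\infty$ is large.) The repair is the paper's: track $\rho+m$ in one Zlotnik application, and on $[\sigma(T),T]$ use Young's inequality to write $\frac{1}{2\mu+\lambda}(\rho+m)|F|\le N_1+C|F|^4$ with $N_1=\frac{1}{2\mu+\lambda}$ a pure constant (not depending on $M,\bar\rho,\bar m$), so that the damping condition $g(\zeta)\le-N_1$ only has to be checked at the single level $\zeta=\bar\rho+\bar m$; your version, where $N_1$ is allowed to depend on $M,\bar\rho,\bar m$, would push $\hat\zeta$ up even further.
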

\begin{proof}
 First, the equation $\eqref{1.1}_1$ and $\eqref{1.1}_2$ can be rewritten as
	\begin{equation}\label{3.40}
		D_t(\rho+m)=g(\rho+m)+b'(t)
	\end{equation}
where $D_t(\rho+m)\triangleq(\rho+m)_t+u\cdot\nabla(\rho+m),\, g(\rho+m)\triangleq-\frac{\rho+m}{2\mu+\lambda}(P-P_\infty)$, and $b(t)\triangleq-\frac{1}{2\mu+\lambda}\int_{0}^{t}(\rho+m)Fd\tau$.
On the one hand, for all $0\le t_1\le t_2\le\sigma(T)$, one deduces from \eqref{2.2}, \eqref{2.11}, \eqref{2.13}, \eqref{3.5}, \eqref{3.28} and \eqref{3.3.1} that
\begin{equation*}{\small
	\begin{aligned}
		&|b(t_2)-b(t_1)|\le C\int_{t_1}^{t_2}|(\rho+m)F|dt\le C\int_{0}^{\sigma(T)}\|F\|_{L^\infty}dt\\
		&\le C\int_{0}^{\sigma(T)}\|F\|_{L^6}^{\frac{1}{2}}\|\nabla F\|_{L^6}^{\frac{1}{2}}dt\le C\int_{0}^{\sigma(T)}\|(\rho+m)^{\frac{1}{2}}\dot{u}\|_{L^2}^{\frac{1}{2}}\|\nabla\dot{u}\|_{L^2}^{\frac{1}{2}}dt\\ &\le C\int_{0}^{\sigma(T)}(t\|(\rho+m)^{\frac{1}{2}}\dot{u}\|_{L^2}^2)^{\frac{1}{8}}\|(\rho+m)^{\frac{1}{2}}\dot{u}\|_{L^2}^{\frac{1}{4}}(t\|\nabla\dot{u}\|_{L^2}^2)^{\frac{1}{4}}t^{-\frac{3}{8}}dt\\		&\le C\left(\int_{0}^{\sigma(T)}\!\!t\|(\rho+m)^{\frac{1}{2}}\dot{u}\|_{L^2}^2dt\right)^{\frac{1}{8}}\left(\int_{0}^{\sigma(T)}\!\!\|(\rho+m)^{\frac{1}{2}}\dot{u}\|_{L^2}^2dt\right)^{\frac{1}{8}}
\left(\int_{0}^{\sigma(T)}\!\!t\|\nabla\dot{u}\|_{L^2}^2dt\right)^{\frac{1}{4}}\left(\int_{0}^{\sigma(T)}\!\!t^{-\frac{3}{4}}dt\right)^{\frac{1}{2}}\\
&\le C(\bar{\rho}+\bar{m},M)C_0^{\frac{1}{16}},
	\end{aligned}}
\end{equation*}
From Lemma \ref{Lm2.3}, we choose $N_1=0$, $N_0=C(\bar{\rho}+\bar{m},M)C_0^{\frac{1}{16}}$, and $\hat{\zeta}=\bar{\rho}+\bar{m}$ and then we use \eqref{3.40} to get
\begin{equation}\label{3.43}
	\mathop{{\rm sup}}_{0\le t\le\sigma(T)}\|\rho+m\|_{L^\infty}\le\bar{\rho}+\bar{m}+C(\bar{\rho}+\bar{m},M)C_0^{\frac{1}{16}}\le\frac{3}{2}\left(\bar{\rho}+\bar{m}\right),
\end{equation}
provided
\begin{align*}
	C_0\le\varepsilon_3\triangleq {\rm min}\left\{\varepsilon_2,\left(\frac{\bar{\rho}+\bar{m}}{2C(\bar{\rho}+\bar{m},M)}\right)^{16}\right\}.
\end{align*}

On the other hand,
for $\sigma(T)\le t_1\le t_2\le T$, it follow from \eqref{2.2}, \eqref{2.11}, \eqref{2.13} and \eqref{3.5} that
\begin{equation*}
	\begin{aligned}
	|b(t_2)-b(t)_1|
	&\le\frac{1}{2\mu+\lambda}(t_2-t_1)+C\int_{t_1}^{t_2}\|F\|^4_{L^{\infty}}dt\\
	&\le\frac{1}{2\mu+\lambda}(t_2-t_1)+C\int_{\sigma(T)}^{T}\|F\|_{L^6}^2\|\nabla F\|_{L^6}^2dt\\
	&\le\frac{1}{2\mu+\lambda}(t_2-t_1)+C\int_{\sigma(T)}^{T}\|(\rho+m)^{\frac{1}{2}}\dot{u}\|_{L^2}^2\|\nabla\dot{u}\|_{L^2}^2dt\\
	&\le\frac{1}{2\mu+\lambda}(t_2-t_1)+CC_0.
	\end{aligned}
\end{equation*}

Now, choosing $N_0=CC_0$, $N_1=\frac{1}{2\mu+\lambda}$ in \eqref{2.3} and setting $\hat{\zeta}=\bar{\rho}+\bar{m}$ in \eqref{2.4}, it gives that for all $\zeta\ge\hat{\zeta}=\bar{\rho}+\bar{m}$,
\begin{equation}\label{3.45}
	g(\zeta)=-\frac{\zeta}{2\mu+\lambda}\left(P(\zeta)-P_{\infty}\right)\le-\frac{\bar{\rho}+\bar{m}}{2\mu+\lambda}\le-\frac{1}{2\mu+\lambda}=-N_1,
\end{equation}
which together with Lemma \ref{Lm2.3}, \eqref{3.43} and \eqref{3.45} implies
\begin{equation}\label{3.46}
	\mathop{\rm sup}_{t\in[\sigma(T),T]}\|\rho+m\|_{L^\infty}\le\frac{3}{2}\left(\bar{\rho}+\bar{m}\right)+CC_0\le\frac{7}{4}\left(\bar{\rho}+\bar{m}\right),
\end{equation}
provided
\begin{equation}\label{3.47}
	C_0\le\varepsilon\triangleq{\rm min}\left\{\varepsilon_3,\frac{\bar{\rho}+\bar{m}}{4C}\right\}.
\end{equation}
The combination of \eqref{3.43} with \eqref{3.46} completes the proof of Lemma \ref{Lm3.8}.
\end{proof}
\section{A priori estimates~(\uppercase\expandafter{\romannumeral2}): higher order estimates}\label{S4}
Suppose $(\rho,m,u)$ is a smooth solution of \eqref{1.1}--\eqref{1.5}. In order to extend the classical solution globally in time, assume \eqref{3.47} holds, and the positive constant C may depend on
\begin{align*}
	T,\|g\|_{L^2},\|\nabla u_0\|_{H^1},\|\rho_0-\rho_\infty\|_{H^2\cap W^{2,q}},\|m_0-m_\infty\|_{H^2\cap W^{2,q}},\|P(\rho_0,m_0)-P_\infty\|_{H^2\cap W^{2,q}},
\end{align*}
for besides $\lambda$, $\mu$, $\gamma$, $\alpha$, M, $\Omega$, $M$, $\bar{\rho}$ and $\bar{m}$, where $g\in L^2(\Omega)$ is gives as in \eqref{1.10}, we give some necessary higher order estimates.
\begin{lemma}\label{Lm4.1}
There exists a positive constant C, such that
\begin{align}\label{4.1}
&\mathop{{\rm sup}}_{0\le t\le T}\|(\rho+m)^{\frac{1}{2}}\dot{u}\|_{L^2}+\int_{0}^{T}\|\nabla\dot{u}\|_{L^2}^2dt\le C \ \text{and}\\
\label{4.2}
&\mathop{{\rm sup}}_{0\le t\le T}\left(\|\nabla\rho\|_{L^2\cap L^6}+\|\nabla m\|_{L^2\cap L^6}+\|\nabla u\|_{H^1}\right)+\int_{0}^{T}(\|\nabla u\|_{L^\infty}+\|\nabla^2u\|_{L^6})dt\le C.
\end{align}
\end{lemma}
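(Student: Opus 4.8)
The plan is to establish \eqref{4.1} first and then \eqref{4.2}, since the bound on $\nabla u$ and its derivatives in \eqref{4.2} rests decisively on the control $\int_0^T\|\nabla\dot u\|_{L^2}^2\,dt\le C$ coming from \eqref{4.1}. Throughout one is allowed to let the constant depend on $T$ and on the initial norms listed above, so the smallness of $C_0$ is only used through the a priori bounds of Section~\ref{S3}.

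\textbf{Proof of \eqref{4.1}.} I would take $h=0$ in the differential inequality \eqref{3.26}, so that the singular factor $\sigma^{h-1}\sigma'$ disappears, and integrate over $(0,t)$. The initial value $\int(\rho_0+m_0)|\dot u(0)|^2\,dx$ is finite and equals $\|g\|_{L^2}^2$ because the compatibility condition \eqref{1.10} gives $(\rho_0+m_0)\dot u(0)=(\rho_0+m_0)^{1/2}g$; likewise the initial boundary integral $\int_{\partial\Omega}F(0)u_0\cdot\nabla n\cdot u_0\,ds$ is bounded by $\|\nabla F(0)\|_{L^2}\|\nabla u_0\|_{L^2}^2\le C(\bar\rho+\bar m)\|g\|_{L^2}\|\nabla u_0\|_{L^2}^2$ via \eqref{2.11}. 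The boundary term on the left of \eqref{3.26} is absorbed using $|\int_{\partial\Omega}Fu\cdot\nabla n\cdot u\,ds|\le\delta\|(\rho+m)^{1/2}\dot u\|_{L^2}^2+C\|\nabla u\|_{L^2}^4$, again by \eqref{2.11}. It then remains to bound $\int_0^T(\|(\rho+m)^{1/2}\dot u\|_{L^2}^2(1+\|\nabla u\|_{L^2}^4)+\|\nabla u\|_{L^2}^2+\|\nabla u\|_{L^2}^6+\|\nabla u\|_{L^4}^4)\,dt$: the first three are controlled by the energy estimate \eqref{3.7} together with $A_1(T)+A_2(T)\le2C_0^{1/2}$, $A_3(\sigma(T))\le2M$ and Lemma~\ref{Lm3.4}, while for $\int_0^T\|\nabla u\|_{L^4}^4\,dt$ one splits at $\sigma(T)$, uses \eqref{3.34} on $[\sigma(T),T]$, and on $[0,\sigma(T)]$ uses \eqref{2.15} to get $\|\nabla u\|_{L^4}^4\le C(\|(\rho+m)^{1/2}\dot u\|_{L^2}^3+1)$, after which Young's inequality and $\int_0^{\sigma(T)}\|(\rho+m)^{1/2}\dot u\|_{L^2}^2\,dt\le M$ absorb a small multiple of $\sup_{[0,t]}\|(\rho+m)^{1/2}\dot u\|_{L^2}^2$. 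This closes a Gronwall inequality for $\sup_{[0,t]}\|(\rho+m)^{1/2}\dot u\|_{L^2}^2+\int_0^t\|\nabla\dot u\|_{L^2}^2\,ds$, giving \eqref{4.1}.

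\textbf{Proof of \eqref{4.2}.} First I would record the elliptic bounds obtained from Lemma~\ref{Lm2.8} applied to $u$, the identity $F=(\lambda+2\mu){\rm div}u-(P-P_\infty)$, \eqref{2.11}--\eqref{2.12}, and $\|(\rho+m)\dot u\|_{L^6}\le C\|\nabla\dot u\|_{L^2}$: namely $\|\nabla^2u\|_{L^2}\le C$ and $\|\nabla^2u\|_{L^6}\le C(1+\|\nabla\dot u\|_{L^2}+\|\nabla P\|_{L^6})$. Next, differentiating $\eqref{1.1}_1$, $\eqref{1.1}_2$ and the pressure equation $P_t+u\cdot\nabla P+(\gamma\rho^\gamma+\alpha m^\alpha){\rm div}u=0$, testing against $|\nabla\rho|^{p-2}\nabla\rho$, $|\nabla m|^{p-2}\nabla m$, $|\nabla P|^{p-2}\nabla P$ for $p=2,6$ and using $u\cdot n=0$ on $\partial\Omega$ to kill the transport boundary terms, one gets
\[
\frac{d}{dt}\big(\|\nabla\rho\|_{L^2\cap L^6}+\|\nabla m\|_{L^2\cap L^6}+\|\nabla P\|_{L^2\cap L^6}\big)\le C(1+\|\nabla u\|_{L^\infty})\big(\|\nabla\rho\|_{L^2\cap L^6}+\|\nabla m\|_{L^2\cap L^6}+\|\nabla P\|_{L^2\cap L^6}\big)+C\|\nabla^2u\|_{L^2\cap L^6}+C.
\]
Finally, the Beale--Kato--Majda-type inequality of Lemma~\ref{Lm2.9} together with \eqref{2.2} and \eqref{2.11}--\eqref{2.13} gives $\|\nabla u\|_{L^\infty}\le C(1+\|\nabla\dot u\|_{L^2}^{1/2})\ln(e+\|\nabla^2u\|_{L^q})+C$ and $\|\nabla^2u\|_{L^q}\le C(1+\|\nabla\dot u\|_{L^2}+\|\nabla P\|_{L^2\cap L^6})$ for $q\in(3,6)$; setting $W(t)=e+\|\nabla\rho\|_{L^2\cap L^6}+\|\nabla m\|_{L^2\cap L^6}+\|\nabla P\|_{L^2\cap L^6}$ one then finds $(\ln W)'(t)\le C(1+\|\nabla\dot u\|_{L^2})(1+\ln W(t))$, and Gronwall together with $\int_0^T(1+\|\nabla\dot u\|_{L^2})\,dt\le C(T+\sqrt T(\int_0^T\|\nabla\dot u\|_{L^2}^2\,dt)^{1/2})\le C$ yields $W(t)\le C$. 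With $\sup_{[0,T]}\|\nabla(\rho,m,P)\|_{L^2\cap L^6}\le C$ in hand, the elliptic bounds give $\|\nabla u\|_{H^1}\le C$, $\|\nabla^2u\|_{L^6}\le C(1+\|\nabla\dot u\|_{L^2})$ and $\|\nabla u\|_{L^\infty}\le C(1+\|\nabla\dot u\|_{L^2})$, which integrate in time to complete \eqref{4.2}.

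\textbf{Main obstacle.} The delicate part is the coupled logarithmic Gronwall step in \eqref{4.2}: $\|\nabla u\|_{L^\infty}$ is only controlled logarithmically in $\|\nabla^2u\|_{L^q}$, which itself depends on $\|\nabla\dot u\|_{L^2}$ and on the very quantities $\|\nabla(\rho,m,P)\|_{L^2\cap L^6}$ being bounded. Closing the loop requires the precise dependence on $\|\nabla\dot u\|_{L^2}$ (not merely its time integrability), so that after dividing by $W$ the right-hand side is linear in $\ln W$ with a coefficient whose integral over $[0,T]$ is finite by \eqref{4.1}. A secondary technical point is the careful handling of the exterior-domain boundary integrals and of the initial acceleration through the compatibility condition \eqref{1.10} in the proof of \eqref{4.1}.
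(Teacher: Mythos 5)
Your proposal is correct and follows essentially the same route as the paper: \eqref{4.1} is obtained by taking $h=0$ in \eqref{3.26} and absorbing the boundary and $\int_0^T\|\nabla u\|_{L^4}^4\,dt$ terms via \eqref{2.11}, \eqref{2.15} and the Section~\ref{S3} bounds (the paper packages the unweighted bound $\sup\|\nabla u\|_{L^2}^2+\int_0^T\|(\rho+m)^{1/2}\dot u\|_{L^2}^2\,dt\le C$ as a preliminary step \eqref{4.3}, which you instead read off from Lemma~\ref{Lm3.4} and $A_1$), and \eqref{4.2} is obtained exactly as in the paper by combining the transport estimate \eqref{4.4}, the effective-viscous-flux elliptic bounds \eqref{4.5}--\eqref{4.6}, and the Beale--Kato--Majda logarithmic Gronwall argument driven by $\int_0^T\|\nabla\dot u\|_{L^2}^2\,dt\le C$. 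The only differences are cosmetic: you track $\nabla\rho$, $\nabla m$, $\nabla P$ separately instead of $\nabla(\rho+m)$, and your preliminary claim $\|\nabla^2u\|_{L^2}\le C$ should formally be stated as $\|\nabla^2u\|_{L^2}\le C(1+\|\nabla P\|_{L^2})$ until the Gronwall step closes, which your $W$-formulation already accommodates.
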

\begin{proof}
	By \eqref{3.18}, \eqref{3.36}, \eqref{3.37} and Lemma \ref{Lm2.6}, it gives
	\begin{equation*}
		\|\nabla u\|_{L^2}^2+\int_{0}^{T}\|(\rho+m)^{\frac{1}{2}}\dot{u}\|_{L^2}^2dt\le\int_{0}^{T}\|\nabla u\|_{L^2}^4dt+C,
	\end{equation*}
which together with Growall's inequality yields that
\begin{equation}\label{4.3}
	\mathop{{\rm sup}}_{0\le t\le T}\|\nabla u\|_{L^2}^2+\int_{0}^{T}\|(\rho+m)^{\frac{1}{2}}\dot{u}\|_{L^2}^2dt\le C.
\end{equation}
Choosing $h=0$ in \eqref{3.26},
we deduce from \eqref{2.11}, \eqref{2.15} and \eqref{4.3} that
\begin{equation*}
	\begin{aligned}
		&\mathop{\rm sup}_{0\le t\le T}\|(\rho+m)^{\frac{1}{2}}\dot{u}\|_{L^2}^2+\int_{0}^{T}\|\nabla\dot{u}\|_{L^2}^2dt\\
		&\le C\int_{\partial\Omega}|Fu\cdot\nabla n\cdot u|ds-\int_{\partial\Omega}|F_0u_0\cdot\nabla n\cdot u_0|ds+C\int_{0}^{T}(\|\nabla u\|_{L^2}^2+\|\nabla u\|_{L^2}^6+\|\nabla u\|_{L^4}^4)dt\\
		&\quad+\int_{0}^{T}\|(\rho+m)^{\frac{1}{2}}\dot{u}\|_{L^2}^2(1+\|\nabla u\|_{L^2}^4)dt\\
		&\le C\|\nabla F\|_{L^2}\|\nabla u\|^2_{L^2}+\|\nabla F_0\|_{L^2}\|\nabla u_0\|_{L^2}^2+C\\
		&\quad+\left(\mathop{\rm sup}_{0\le t\le T}\|\nabla u\|_{L^2}^2\right)^{\frac{1}{2}}\left(\mathop{\rm sup}_{0\le t\le T}\|(\rho+m)^{\frac{1}{2}}\dot{u}\|_{L^2}^2\right)^{\frac{1}{2}}\int_{0}^{T}\|(\rho+m)^{\frac{1}{2}}\dot{u}\|_{L^2}^2dt\\
		&\le\delta\mathop{\rm sup}_{0\le t\le T}\|(\rho+m)^{\frac{1}{2}}\dot{u}\|_{L^2}^2+C.
	\end{aligned}
\end{equation*}
Then choosing $\delta$ small enough, it gives \eqref{4.1}. Observe that for $2\le p\le6$, it indicates that
\begin{align*}
	&\left(|\nabla(\rho+m)|^p\right)_t+{\rm div}\left(|\nabla(\rho+m)|^pu\right)+(p-1)|\nabla(\rho+m)|^p{\rm div}u\\
	&+p|\nabla(\rho+m)|^{p-2}\left(\nabla(\rho+m)\right)^{tr}\nabla u\left(\nabla(\rho+m)\right)+p(\rho+m)|\nabla(\rho+m)|^{p-2}\nabla(\rho+m)\cdot\nabla{\rm div}u=0.
\end{align*}
Integrating the above equality over $\Omega$ and using \eqref{2.11} imply that
\begin{equation}\label{4.4}
	\begin{aligned}
		\left(\|\nabla(\rho+m)\|_{L^p}\right)_t\le&C(1+\|\nabla u\|_{L^\infty})\|\nabla(\rho+m)\|_{L^p}+\|\nabla F\|_{L^p}\\
		\le&C(1+\|\nabla u\|_{L^\infty})\|\nabla(\rho+m)\|_{L^p}+C\|(\rho+m)\dot{u}\|_{L^p}.
	\end{aligned}
\end{equation}
Moreover, by Lemma \ref{Lm2.8}, \eqref{1.19}, \eqref{2.11} and \eqref{2.15}, for any $2\le p\le6$, we have that
\begin{equation}\label{4.5}
	\begin{aligned}
	\|\nabla^2u\|_{L^p}\le& C(\|{\rm div}u\|_{W^{1,p}}+\|{\rm curl}u\|_{W^{1,p}}+\|\nabla u\|_{L^2})\\
	\le&C(\|(\rho+m)\dot{u}\|_{L^p}+\|\nabla P\|_{L^p}+\|(\rho+m)\dot{u}\|_{L^2}+\|\nabla u\|_{L^2}+\|P-P_\infty\|_{L^6}).
	\end{aligned}
\end{equation}

Next, it follows from \eqref{2.2}, \eqref{1.19}, \eqref{2.11}, \eqref{2.12}, \eqref{3.13} and \eqref{4.1} that
\begin{equation}\label{4.6}
	\begin{aligned}
		\|{\rm div}u\|_{L^\infty}+\|{\rm curl}u\|_{L^\infty}\le&C(\|F\|_{L^\infty}+\|P-{P}_\infty\|_{L^\infty})+\|{\rm curl}u\|_{L^\infty}\\
		\le&C\left(\|F\|_{L^2}+\|\nabla F\|_{L^6}+\|{\rm curl}u\|_{L^2}+\|\nabla{\rm curl}u\|_{L^6}+1\right)\\
		\le&C\left(\|(\rho+m)\dot{u}\|_{L^6}+\|P-{P}_\infty\|_{L^2}+\|\nabla u\|_{L^2}+\|(\rho+m)\dot{u}\|_{L^2}+1\right)\\
		\le&C(\|\nabla\dot{u}\|_{L^2}+1).
	\end{aligned}
\end{equation}
By Lemma \ref{Lm2.9}, \eqref{4.5} and \eqref{4.6}, we get
\begin{equation}\label{4.7}
	\begin{aligned}
	\|\nabla u\|_{L^\infty}\le&C\left(\|{\rm div}u\|_{L^\infty}+\|{\rm curl}u\|_{L^\infty}\right){\rm ln}(e+\|\nabla^2u\|_{L^6})+C(\|\nabla u\|_{L^2}+1)\\
	\le&C(\|\nabla\dot{u}\|_{L^2}+1){\rm ln}(e+\|\nabla^2u\|_{L^6})+C(\|\nabla u\|_{L^2}+1)\\
	\le&C(\|\nabla\dot{u}\|_{L^2}+1){\rm ln}\left(e+\|(\rho+m)\dot{u}\|_{L^6}+\|\nabla P\|_{L^6}+\|\nabla u\|_{L^2}\right)+C(\|\nabla u\|_{L^2}+1)\\
	\le&C(\|\nabla\dot{u}\|_{L^2}+1)({\rm ln}(e+\|\nabla\dot{u}\|_{L^2})+{\rm ln}(e+\|\nabla(\rho+m)\|_{L^6}))+C\\
	\le&C\left(\|\nabla\dot{u}\|^2_{L^2}+1\right)+C\left(\|\nabla\dot{u}\|_{L^2}+1\right){\rm ln}\left(e+\|\nabla(\rho+m)\|_{L^6}\right).
	\end{aligned}
\end{equation}
Combining \eqref{4.7} with \eqref{4.4} yields
\begin{align*}
	\left({\rm ln}\left(e+\|\nabla(\rho+m)\|_{L^6}\right)\right)_t\le C\left(1+(\|\nabla\dot{u}\|^2_{L^2}+1){\rm ln}(e+\|\nabla(\rho+m)\|_{L^6})\right)+C(\|\nabla\dot{u}\|_{L^2}+1).
\end{align*}
And then by Gronwall's inequality and \eqref{4.1}, we obtain
\begin{equation}\label{4.8}
	\mathop{\rm{sup}}_{0\le t\le T}\|\nabla(\rho+m)\|_{L^6}\le C.
\end{equation}
Moreover, \eqref{4.7} and \eqref{4.8} imply that
\begin{equation}\label{4.9}
	\int_{0}^{T}\|\nabla u\|_{L^\infty}dt\le C.
\end{equation}
Using the above inequality, \eqref{4.4} and \eqref{4.9}, when $p=2$ yields that
\begin{equation*}
		\mathop{\rm{sup}}_{0\le t\le T}\|\nabla(\rho+m)\|_{L^2}\le C,
\end{equation*}
which together with \eqref{4.1}, \eqref{4.5} and \eqref{4.8} gives that
\begin{equation*}
		\mathop{\rm{sup}}_{0\le t\le T}\|\nabla^2u\|_{L^2}\le C,\ \ \int_{0}^{T}\|\nabla^2u\|_{L^6}dt\le C.
\end{equation*}
Hence, we finish the proof of Lemma \ref{Lm4.1}.	
\end{proof}
\begin{lemma}\label{Lm4.2}
There exists a constant C such that
\begin{equation}\label{4.10}
	\mathop{\rm{sup}}_{0\le t\le T}\|(\rho+m)^{\frac{1}{2}}u_t\|^2_{L^2}+\int_{0}^{T}\|\nabla u_t\|_{L^2}^2dt\le C,
\end{equation}
\begin{equation}\label{4.11}
	\mathop{\rm{sup}}_{0\le t\le T}\left(\|\rho-{\rho}_\infty\|_{H^2}+\|m-{m}_\infty\|_{H^2}+\|P-{P}_\infty\|_{H^2}\right)\le C.
\end{equation}
\end{lemma}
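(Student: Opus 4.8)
The plan is to establish \eqref{4.10} and \eqref{4.11} in turn, both resting on Lemma \ref{Lm4.1}. Estimate \eqref{4.10} comes out essentially by algebra from $u_t=\dot u-u\cdot\nabla u$: since
\[
\|(\rho+m)^{1/2}u_t\|_{L^2}\le\|(\rho+m)^{1/2}\dot u\|_{L^2}+\|(\rho+m)^{1/2}(u\cdot\nabla u)\|_{L^2},
\]
the first term is bounded by \eqref{4.1} and the second by $C\|u\|_{L^6}\|\nabla u\|_{L^3}\le C\|\nabla u\|_{L^2}\|\nabla u\|_{H^1}$, using \eqref{2.1} and the density bound \eqref{3.39}, hence uniformly bounded by \eqref{4.2}. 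For the dissipation term, $\|\nabla u_t\|_{L^2}\le\|\nabla\dot u\|_{L^2}+\|\nabla(u\cdot\nabla u)\|_{L^2}$, with $\|\nabla(u\cdot\nabla u)\|_{L^2}\le C(\|\nabla u\|_{L^4}^2+\|u\|_{L^\infty}\|\nabla^2u\|_{L^2})\le C$ by \eqref{4.2} and $\|u\|_{L^\infty}\le C\|\nabla u\|_{L^2}^{1/2}\|\nabla u\|_{L^6}^{1/2}$ (which follows from \eqref{2.2} and \eqref{2.1}); thus $\int_0^T\|\nabla u_t\|_{L^2}^2\,dt\le C\int_0^T\|\nabla\dot u\|_{L^2}^2\,dt+CT$ is finite by \eqref{4.1}.

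For \eqref{4.11}, note that $P=\rho^\gamma+m^\alpha$ gives $\|\nabla^2P\|_{L^2}\le C\bigl(\|\nabla^2\rho\|_{L^2}+\|\nabla^2m\|_{L^2}+\|\nabla\rho\|_{L^4}^2+\|\nabla m\|_{L^4}^2\bigr)$, with the last two terms already controlled by \eqref{4.2}, so it suffices to bound $\Phi\triangleq\|\nabla^2\rho\|_{L^2}^2+\|\nabla^2m\|_{L^2}^2$. I would apply $\nabla^2$ to $\eqref{1.1}_1$ and $\eqref{1.1}_2$, multiply by $\nabla^2\rho$ and $\nabla^2m$ respectively, and integrate over $\Omega$; the convective contributions reduce to $-\frac12\int({\rm div}\,u)(|\nabla^2\rho|^2+|\nabla^2m|^2)$, and after Hölder's inequality one is left with
\begin{align*}
\frac{d}{dt}\Phi\le{}&C(1+\|\nabla u\|_{L^\infty})\Phi\\
&+C\bigl(\|\nabla^2\rho\|_{L^2}+\|\nabla^2m\|_{L^2}\bigr)\Bigl(\|\nabla^2u\|_{L^3}\bigl(\|\nabla\rho\|_{L^6}+\|\nabla m\|_{L^6}\bigr)+\|\rho+m\|_{L^\infty}\|\nabla^3u\|_{L^2}\Bigr).
\end{align*}
The dangerous quantity is $\|\nabla^3u\|_{L^2}$, forced in by the term $\rho\,\nabla^2{\rm div}\,u$. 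I would control it by applying Lemma \ref{Lm2.8} with $k=2$ to $u$ via $(\lambda+2\mu){\rm div}\,u=F+(P-P_\infty)$ and \eqref{1.20}, together with the elliptic estimate \eqref{2.17} for $-\Delta F={\rm div}((\rho+m)\dot u)$, which yields
\[
\|\nabla^3u\|_{L^2}\le C\bigl(1+\|\nabla\dot u\|_{L^2}+\|\nabla^2P\|_{L^2}\bigr)\le C\bigl(1+\|\nabla\dot u\|_{L^2}+\|\nabla^2\rho\|_{L^2}+\|\nabla^2m\|_{L^2}\bigr).
\]

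Substituting this, together with $\|\nabla^2u\|_{L^3}\le C\|\nabla^2u\|_{L^2}^{1/2}\|\nabla^2u\|_{L^6}^{1/2}$ and \eqref{4.2}, and then absorbing the quadratic terms by Young's inequality, one reaches a Gronwall-type inequality $\frac{d}{dt}\Phi\le\eta(t)\Phi+\xi(t)$, where $\eta(t)=C(1+\|\nabla u\|_{L^\infty}+\|\nabla^2u\|_{L^6})$ and $\xi(t)=C(1+\|\nabla\dot u\|_{L^2}^2+\|\nabla^2u\|_{L^6})$ are both integrable on $[0,T]$ by \eqref{4.1} and \eqref{4.2}. Gronwall's inequality then gives $\sup_{[0,T]}\Phi\le C$, hence $\sup_{[0,T]}\|\nabla^2P\|_{L^2}\le C$, and combined with the $L^2$ bounds from the energy estimate Lemma \ref{Lm3.2} and the first-order bounds of Lemma \ref{Lm4.1} this proves \eqref{4.11}. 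The hard part is precisely the appearance of the third-order term $\|\nabla^3u\|_{L^2}$ (equivalently $\|\nabla^2{\rm div}\,u\|_{L^2}$), which is not an a priori controlled quantity; the resolution is that elliptic regularity trades it for $\|\nabla^2P\|_{L^2}$ plus the $L^2_t$-integrable quantity $\|\nabla\dot u\|_{L^2}$, and $\|\nabla^2P\|_{L^2}$ re-enters only quadratically with a constant coefficient, so the Gronwall argument still closes on the finite interval $[0,T]$.
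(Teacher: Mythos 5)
Your argument is correct and follows essentially the same route as the paper: \eqref{4.10} by splitting $u_t=\dot u-u\cdot\nabla u$ and invoking Lemma \ref{Lm4.1}, and \eqref{4.11} by differentiating the mass equations twice and trading $\|\nabla^3u\|_{L^2}$, via the div--curl/effective-viscous-flux elliptic estimate \eqref{4.13}, for $\|\nabla\dot u\|_{L^2}$ plus $\|\nabla^2 P\|_{L^2}$ before closing with Gronwall. Your only deviations are cosmetic (tracking $\rho$ and $m$ separately through the squared quantity $\Phi$ instead of the paper's $\|\nabla^2(\rho+m)\|_{L^2}$, and slightly different H\"older splittings), which do not change the proof.
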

\begin{proof}
	By Lemma \ref{Lm4.1}, a simple computation shows that
	\begin{equation*}
		\begin{aligned}
			\|(\rho+m)^{\frac{1}{2}}u_t\|_{L^2}^2\le&\|(\rho+m)^{\frac{1}{2}}\dot{u}\|_{L^2}^2+\|(\rho+m)^{\frac{1}{2}}u\cdot\nabla u\|_{L^2}^2\\
			\le&C+C\|(\rho+m)^\frac{1}{2}u\|_{L^3}^2\|\nabla u\|_{L^6}^2\\
			\le&C+C\|(\rho+m)^{\frac{1}{2}}u\|_{L^2}\|u\|_{L^6}\|\nabla u\|^2_{L^6}\\
			\le&C,
		\end{aligned}
	\end{equation*}
and
\begin{equation*}
	\begin{aligned}
	\int_{0}^{T}\|\nabla u_t\|_{L^2}^2dt\le&\int_{0}^{T}\|\nabla\dot{u}\|^2_{L^2}dt+\int_{0}^{T}\|\nabla(u\cdot\nabla u)\|^2_{L^2}dt\\
	\le&C+\int_{0}^{T}\left(\|\nabla u\|_{L^4}^4+\|u\|^2_{L^\infty}\|\nabla^2u\|^2_{L^2}\right)dt\\
	\le&C+\int_{0}^{T}\left(\|\nabla u\|_{L^2}\|\nabla u\|_{L^6}^3+\|\nabla u\|_{H^1}^2\right)dt\\
	\le&C,
	\end{aligned}
\end{equation*}
so we have \eqref{4.10}.
By using $\eqref{1.1}_1$, $\eqref{1.2}_2$ and \eqref{4.2}, it shows that
\begin{equation}\label{4.12}
\begin{aligned}
	\left(\|\nabla^2(\rho+m)\|_{L^2}\right)_t&\le C(1+\|\nabla^2u\|_{L^6}+\|\nabla u\|_{L^\infty})\|\nabla^2(\rho+m)\|_{L^2}+C\|\nabla^3u\|_{L^2}\\
	&\le C(1+\|\nabla^2u\|_{L^6}+\|\nabla u\|_{L^\infty})\|\nabla^2(\rho+m)\|_{L^2}+C(\|\nabla\dot{u}\|^2_{L^2}+1),
\end{aligned}
\end{equation}
where in the last inequality we have used the fact that
\begin{equation}\label{4.13}
	\begin{aligned}
	\|\nabla^3u\|_{L^p}&\le C(\|{\rm div}u\|_{W^{2,p}}+\|{\rm curl}u\|_{W^{2,p}}+\|\nabla u\|_{L^2})\\
	&\le C(\|(\rho+m)\dot{u}\|_{W^{1,p}}+\|P-P_\infty\|_{W^{2,p}}+\|\nabla u\|_{L^2}+\|(\rho+m)\dot{u}\|_{L^2}+\|P-P_\infty\|_{L^2}),
\end{aligned}
\end{equation}
for any $p\in[2,6]$  by \eqref{2.17}--\eqref{2.20} and \eqref{1.19}.

Employing Gronwall's inequality, \eqref{4.1}, \eqref{4.2}, and \eqref{4.12} leads to
\begin{equation*}
	\mathop{\rm{sup}}_{0\le t\le T}\|\nabla^2(\rho+m)\|_{L^2}\le C.
\end{equation*}
Hence,
\begin{equation}\label{4.14}
\|\nabla^2P\|_{L^2}\le C\|\nabla^2(\rho+m)\|_{L^2}\le C.
\end{equation}
Therefore, the proof of Lemma \ref{4.2} is completed.
\end{proof}
\begin{lemma}\label{Lm4.3}
There exists a constant C such that
\begin{equation}\label{4.15}
	\mathop{\rm{sup}}_{0\le t\le T}(\|\rho_t\|_{H^1}+\|m_t\|_{H^1}+\|P_t\|_{H^1})+\int_{0}^{T}(\|\rho_{tt}\|^2_{L^2}+\|m_{tt}\|^2_{L^2}+\|P_{tt}\|^2_{L^2})dt\le C,
\end{equation}
\begin{equation}\label{4.16}
	\mathop{\rm{sup}}_{0\le t\le T}\sigma\|\nabla u_t\|^2_{L^2}+\int_{0}^{T}\sigma\|(\rho+m)^{\frac{1}{2}}u_{tt}\|^2_{L^2}dt\le C.
\end{equation}
\end{lemma}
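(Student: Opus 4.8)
The plan is to establish \eqref{4.15} first, since the bound it provides on $(\rho+m)_{tt}$ is needed in the proof of \eqref{4.16}, and then to derive \eqref{4.16} by differentiating the momentum equation in time and testing against $\sigma u_{tt}$.

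\textbf{Step 1: proof of \eqref{4.15}.} From the mass equations one has $\rho_t=-u\cdot\nabla\rho-\rho\,{\rm div}u$, $m_t=-u\cdot\nabla m-m\,{\rm div}u$. Differentiating these once more in $x$ and estimating by H\"older's and Gagliardo--Nirenberg's inequalities, using Lemmas \ref{Lm4.1}--\ref{Lm4.2} (in particular $\|u\|_{L^\infty}\le C\|\nabla u\|_{L^2}^{1/2}\|\nabla u\|_{L^6}^{1/2}\le C$ and $\|\nabla(\rho+m)\|_{L^2\cap L^6}+\|\nabla^2(\rho+m)\|_{L^2}+\|\nabla^2u\|_{L^2}\le C$), I would get $\|\rho_t\|_{H^1}+\|m_t\|_{H^1}\le C$, and then $\|P_t\|_{H^1}\le C$ from $P=\rho^\gamma+m^\alpha$ and \eqref{4.11}. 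For the second time derivatives, differentiating once more in $t$, e.g.
\[
\rho_{tt}=-u_t\cdot\nabla\rho-u\cdot\nabla\rho_t-\rho_t\,{\rm div}u-\rho\,{\rm div}u_t ,
\]
together with $\|u_t\|_{L^6}\le C\|\nabla u_t\|_{L^2}$ and the bounds just obtained, gives $\|\rho_{tt}\|_{L^2}+\|m_{tt}\|_{L^2}+\|P_{tt}\|_{L^2}\le C(\|\nabla u_t\|_{L^2}+1)$; integrating in time and invoking $\int_0^T\|\nabla u_t\|_{L^2}^2\,dt\le C$ from \eqref{4.10} yields \eqref{4.15}. This step is routine.

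\textbf{Step 2: proof of \eqref{4.16}.} Rewrite \eqref{3.8} as $(\rho+m)u_t+(\rho+m)u\cdot\nabla u+\nabla P=(\lambda+2\mu)\nabla{\rm div}u-\mu\nabla\times{\rm curl}u$, differentiate in $t$, multiply by $\sigma u_{tt}$, and integrate over $\Omega$. Since the boundary and $n$ are time--independent, differentiating \eqref{1.4} gives $u_t\cdot n=u_{tt}\cdot n=0$ and ${\rm curl}u_t\times n=0$ on $\partial\Omega$, so every boundary term produced by integration by parts drops out, leaving an identity of the form
\[
\frac{d}{dt}\Big(\frac{\lambda+2\mu}{2}\sigma\|{\rm div}u_t\|_{L^2}^2+\frac{\mu}{2}\sigma\|{\rm curl}u_t\|_{L^2}^2-\sigma\!\int P_t\,{\rm div}u_t\,dx\Big)+\int\sigma(\rho+m)|u_{tt}|^2\,dx=\sum_k R_k ,
\]
where the $R_k$ are the $\sigma'$--terms $\sigma'(\|\nabla u_t\|_{L^2}^2+\|P_t\|_{L^2}^2)$, the term $\sigma\!\int P_{tt}\,{\rm div}u_t\,dx$, the convection term $-\!\int\sigma\partial_t[(\rho+m)u\cdot\nabla u]\cdot u_{tt}\,dx$, and the term $-\!\int\sigma(\rho+m)_tu_t\cdot u_{tt}\,dx$. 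By Lemma \ref{Lm2.6} with $q=2$ and $u_t\cdot n=0$ we have $\|\nabla u_t\|_{L^2}^2\le C(\|{\rm div}u_t\|_{L^2}^2+\|{\rm curl}u_t\|_{L^2}^2)$, so the functional inside $d/dt$ dominates $c\,\sigma\|\nabla u_t\|_{L^2}^2$ up to the harmless $\sigma\|P_t\|_{L^2}^2\le C$. The $\sigma'$--terms are supported on $[0,1]$ and time--integrable by \eqref{4.10}--\eqref{4.11}; the term $\sigma\!\int P_{tt}\,{\rm div}u_t\,dx\le\delta\sigma\|\nabla u_t\|_{L^2}^2+C\sigma\|P_{tt}\|_{L^2}^2$ is time--integrable by \eqref{4.10} and \eqref{4.15}; and expanding $\partial_t[(\rho+m)u\cdot\nabla u]$, the convection term splits into pieces absorbed by $\delta\!\int\sigma(\rho+m)|u_{tt}|^2\,dx$ plus $C\sigma(\|\nabla u\|_{L^\infty}^2\|(\rho+m)^{1/2}u_t\|_{L^2}^2+\|u\|_{L^\infty}^2\|\nabla u_t\|_{L^2}^2)$, time--integrable by \eqref{4.1}--\eqref{4.2}, together with a $(\rho+m)_t$ piece handled as in Step 3. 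Integrating in $t$ over $(0,T)$, using $\sigma(0)=0$, and applying Gronwall's inequality then yields \eqref{4.16}.

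\textbf{Step 3: the main obstacle.} The delicate term is $-\!\int\sigma(\rho+m)_tu_t\cdot u_{tt}\,dx$: integrating by parts onto $u_{tt}$ via $(\rho+m)_t=-{\rm div}((\rho+m)u)$ would generate the uncontrolled factor $\nabla u_{tt}$, and one cannot divide by $\rho+m$ because of the vacuum. The plan is to use $u_t\cdot u_{tt}=\tfrac12\partial_t|u_t|^2$ and write
\[
-\int\sigma(\rho+m)_tu_t\cdot u_{tt}\,dx=-\frac12\frac{d}{dt}\int\sigma(\rho+m)_t|u_t|^2\,dx+\frac12\int\sigma'(\rho+m)_t|u_t|^2\,dx+\frac12\int\sigma(\rho+m)_{tt}|u_t|^2\,dx ,
\]
moving the total time derivative to the left--hand side. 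The resulting endpoint quantity $\tfrac12\sigma(T)\!\int(\rho+m)_t|u_t|^2\,dx$ is controlled by one further integration by parts, $\int(\rho+m)_t|u_t|^2\,dx=2\!\int(\rho+m)u^ju_t^i\partial_ju_t^i\,dx\le C\|u\|_{L^\infty}\|(\rho+m)^{1/2}u_t\|_{L^2}\|\nabla u_t\|_{L^2}\le C\|\nabla u_t\|_{L^2}$ by \eqref{4.2} and \eqref{4.10}, hence absorbed by the positive term $c\,\sigma\|\nabla u_t\|_{L^2}^2$ on the left; the $\sigma'$--piece is treated like the other $\sigma'$--terms; and the term $\tfrac12\!\int\sigma(\rho+m)_{tt}|u_t|^2\,dx$ is precisely where $\int_0^T\|(\rho+m)_{tt}\|_{L^2}^2\,dt\le C$ from \eqref{4.15} is used, which is the reason \eqref{4.15} must be proved first. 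This is the main difficulty; all remaining contributions are routine estimates with H\"older's and Gagliardo--Nirenberg's inequalities and Lemmas \ref{Lm4.1}--\ref{Lm4.2}.
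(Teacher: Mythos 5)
Your overall scheme (prove \eqref{4.15} first, then differentiate the momentum equation in time, test with $\sigma u_{tt}$, and move the time derivative off the troublesome $(\rho+m)_t$-terms) is the same as the paper's, and your Step 1 matches the paper's proof of \eqref{4.15}. However, the step you yourself single out as the main obstacle is exactly where your argument breaks. You propose to control $\tfrac12\int\sigma(\rho+m)_{tt}|u_t|^2dx$ by invoking $\int_0^T\|(\rho+m)_{tt}\|^2_{L^2}dt\le C$ from \eqref{4.15}. But pairing $(\rho+m)_{tt}\in L^2_x$ against $|u_t|^2$ requires $u_t\in L^4(\Omega)$ (or $(\rho+m)_{tt}\in L^{3/2}$ against $u_t\in L^6$), and neither is available here: $\Omega$ is an exterior domain and vacuum is allowed, so the only norms of $u_t$ at hand are $\|(\rho+m)^{1/2}u_t\|_{L^2}$ and $\|\nabla u_t\|_{L^2}$, hence $\|u_t\|_{L^6}$; there is no embedding $L^6\hookrightarrow L^4$ on the unbounded domain and no unweighted $L^2$ (hence no interpolated $L^4$ or $L^3$) control of $u_t$. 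Expanding $(\rho+m)_{tt}$ via \eqref{4.23} does not help, since the piece $u\cdot\nabla(\rho+m)_t\,|u_t|^2$ hits the same $L^4$ problem. The correct treatment --- the one the paper uses in \eqref{4.25} --- is not to use the $L^2$ bound of $(\rho+m)_{tt}$ at all for this term, but to substitute $(\rho+m)_{tt}=-\big[{\rm div}((\rho+m)u)\big]_t$ and integrate by parts in space, i.e. precisely the trick you already apply to the endpoint term $\int(\rho+m)_t|u_t|^2dx$; this yields a bound $C\sigma(\|(\rho+m)_t\|_{L^3}+\|(\rho+m)u_t\|_{L^3})\|\nabla u_t\|_{L^2}\|u_t\|_{L^6}\le C\sigma\|\nabla u_t\|^2_{L^2}(\|\nabla u_t\|^2_{L^2}+1)$, which Gronwall and \eqref{4.10} absorb. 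The bound on $(\rho+m)_{tt}$ from \eqref{4.15} is indeed needed, but elsewhere: after integrating $-\int\sigma(\rho+m)_tu\cdot\nabla u\cdot u_{tt}dx$ by parts in time (your ``$(\rho+m)_t$ piece handled as in Step 3'', for which the identity $u_t\cdot u_{tt}=\tfrac12\partial_t|u_t|^2$ is not available, so the time integration by parts and its endpoint term $\sigma\int(\rho+m)_tu\cdot\nabla u\cdot u_tdx\le C\sigma\|\nabla u_t\|_{L^2}$ must be spelled out), one gets $\int\sigma(\rho+m)_{tt}\,u\cdot\nabla u\cdot u_t\,dx$, where $(\rho+m)_{tt}\in L^2$ is paired with three $L^6$ factors --- this is admissible and is the paper's \eqref{4.26}.

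A smaller point: your claim that $C\sigma\|\nabla u\|^2_{L^\infty}\|(\rho+m)^{1/2}u_t\|^2_{L^2}$ is time-integrable ``by \eqref{4.1}--\eqref{4.2}'' does not follow, since \eqref{4.2} only gives $\int_0^T\|\nabla u\|_{L^\infty}dt\le C$, not square integrability. Estimate instead $\big|\int\sigma(\rho+m)u_t\cdot\nabla u\cdot u_{tt}dx\big|\le\delta\sigma\|(\rho+m)^{1/2}u_{tt}\|^2_{L^2}+C\sigma\|u_t\|^2_{L^6}\|\nabla u\|^2_{L^3}\le\delta\sigma\|(\rho+m)^{1/2}u_{tt}\|^2_{L^2}+C\sigma\|\nabla u_t\|^2_{L^2}$, using $\|\nabla u\|_{L^3}\le C\|\nabla u\|_{H^1}\le C$ from \eqref{4.2}; this is how the paper argues in \eqref{4.27}, and the resulting $C\sigma\|\nabla u_t\|^2_{L^2}$ term is handled by \eqref{4.10} and Gronwall.
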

\begin{proof}
	By using $\eqref{1.1}_1$, $\eqref{1.1}_2$, we have
	\begin{equation}\label{4.17}
		\left(\rho+m\right)_t+u\cdot\nabla(\rho+m)+(\rho+m)\rm{div}u=0,
	\end{equation}
which together with \eqref{4.2} and \eqref{4.11} gives
\begin{equation}\label{4.18}
	\begin{aligned}
	\|(\rho+m)_t\|_{L^2}\le C\|u\|_{L^\infty}\|\nabla(\rho+m)\|_{L^2}+C\|\nabla u\|_{L^2}
	\le C\|\nabla u\|_{H^1}+C\le C.
	\end{aligned}
\end{equation}
Combining \eqref{4.17} with \eqref{4.2}, \eqref{4.11} implies
\begin{equation}\label{4.19}
	\begin{aligned}
	\|\nabla(\rho_t+m_t)\|_{L^2}&\le C\|\nabla u\|_{L^4}\|\nabla(\rho+m)\|_{L^4}+C\|u\|_{L^\infty}\|\nabla^2(\rho+m)\|_{L^2}+C\|\nabla^2u\|_{L^2}\\
	&\le C\|\nabla u\|_{H^1}\|\nabla(\rho+m)\|_{H^1}+C\|\nabla u\|_{H^1}+C\\
	&\le C.
	\end{aligned}
\end{equation}
Due to the fact that $P_t+u\cdot\nabla P+\gamma\rho^\gamma{\rm div}u+\alpha m^{\alpha}{\rm div}u=0$, we have
\begin{equation}\label{4.20}
	\begin{aligned}
	\|P_t\|_{L^2}\le C\|u\|_{L^\infty}\|\nabla P\|_{L^2}+C\|\nabla u\|_{L^2}
	\le C\|\nabla u\|_{H^1}+C\le C,
	\end{aligned}
\end{equation}
and
\begin{equation}\label{4.21}
	\begin{aligned}
		\|\nabla P_t\|_{L^2}&\le C\|\nabla\rho\|_{L^4}\|\rho_t\|_{L^4}+C\|\nabla\rho_t\|_{L^2}+C\|\nabla m\|_{L^4}\|m_t\|_{L^4}+C\|\nabla m_t\|_{L^2}\\
		&\le C\|\nabla\rho\|_{H^1}\|\rho_t\|_{H^1}+C\|\nabla m\|_{H^1}\|m_t\|_{H^1}+C\\
		&\le C.
	\end{aligned}
\end{equation}
By applying \eqref{4.18}--\eqref{4.21}, we get
\begin{equation}\label{4.22}
	\mathop{\rm{sup}}_{0\le t\le T}(\|\rho_t\|_{H^1}+\|m_t\|_{H^1}+\|P_t\|_{H^1})\le C.
\end{equation}
Differentiating $\eqref{1.1}_1$ and $\eqref{1.1}_2$ with respect to $t$ implies
\begin{equation}\label{4.23}
	(\rho+m)_{tt}+u_t\cdot\nabla(\rho+m)+u\cdot\nabla(\rho_t+m_t)+(\rho_t+m_t){\rm div}u+(\rho+m){\rm div}u_t=0.
\end{equation}
Combining \eqref{4.23} with \eqref{4.2}, \eqref{4.10} and \eqref{4.22} yields
\begin{equation*}
	\begin{aligned}
		\int_{0}^{T}\|(\rho+m)_{tt}\|_{L^2}^2dt&\le C\int_{0}^{T}\|u_t\|_{L^6}^2\|\nabla(\rho+m)\|_{L^3}^2dt+C\int_{0}^{T}\|u\|_{L^\infty}^2\|\nabla(\rho_t+m_t)\|^2_{L^2}dt\\
		&\quad+C\int_{0}^{T}\|\rho_t+m_t\|^2_{L^3}\|\nabla u\|^2_{L^6}dt+C\int_{0}^{T}\|\nabla u_t\|^2_{L^2}dt\\
		&\le C\int_{0}^{T}\|\nabla u_t\|^2_{L^2}dt+C\int_{0}^{T}\|\nabla^2u\|^2_{L^2}dt+C\\
		&\le C.
	\end{aligned}
\end{equation*}
Hence, it gives that
\begin{equation*}
	\begin{aligned}
		\int_{0}^{T}\|P_{tt}\|_{L^2}^2dt&\le C\int_{0}^{T}(\|\rho_t\|^4_{L^4}+\|\rho_{tt}\|_{L^2}^2+\|m_t\|^4_{L^4}+\|m_{tt}\|_{L^2}^2)dt\\
		&\le C\int_{0}^{T}(\|\rho_t\|^4_{H^1}+\|m_t\|^4_{H^1})dt+C\\
		&\le C.
	\end{aligned}
\end{equation*}
So we get \eqref{4.15}.

Next, differentiating $\eqref{1.1}_3$ with respect to $t$, and then multiplying by $u_{tt}$, yields that
\begin{equation}\label{4.24}
	\begin{aligned}
		&\left(\frac{2\mu+\lambda}{2}\|{\rm div}u_t\|_{L^2}^2+\frac{\mu}{2}\|{\rm curl}u_t\|^2_{L^2}\right)_t+\int(\rho+m)|u_{tt}|^2dx\\
		&=-\int(\rho+m)_tu_t\cdot u_{tt}dx-\int(\rho+m)_tu\cdot\nabla u\cdot u_{tt}dx-\int(\rho+m)u_t\cdot\nabla u\cdot u_{tt}dx\\
		&\quad -\int(\rho+m)u\cdot\nabla u_t\cdot u_{tt}dx-\int\nabla P_t\cdot u_{tt}dx\\
		&=:\sum_{i=1}^{5}I_i.
	\end{aligned}
\end{equation}
It follows from $\eqref{1.1}_1$, $\eqref{1.1}_2$, \eqref{4.2}, \eqref{4.10} and \eqref{4.15} that
\begin{equation}\label{4.25}
	\begin{aligned}
		I_1&=-\int(\rho+m)_tu_t\cdot u_{tt}dx\\
		&=-\frac{1}{2}\left(\int(\rho+m)_t|u_t|^2dx\right)_t+\frac{1}{2}\int(\rho+m)_{tt}|u_t|^2dx\\
		&=-\frac{1}{2}\left(\int(\rho+m)_t|u_t|^2dx\right)_t-\frac{1}{2}\int\left({\rm div}(\rho u+mu)\right)_t|u_t|^2dx\\
		&\le -\frac{1}{2}\left(\int(\rho+m)_t|u_t|^2dx\right)_t+C(\|(\rho+m)_t\|_{L^3}+\|(\rho+m)u_t\|_{L^3})\|\nabla u_t\|_{L^2}\|u_t\|_{L^6}\\
		&\le-\frac{1}{2}\left(\int(\rho+m)_t|u_t|^2dx\right)_t+C\|\nabla u_t\|^2_{L^2}(\|\nabla u_t\|^2_{L^2}+1),
	\end{aligned}
\end{equation}
	\begin{align}\label{4.26}
		I_2&=-\int(\rho+m)_tu\cdot\nabla u\cdot u_{tt}dx\notag\\
		&=-\left(\int(\rho+m)_tu\cdot\nabla u\cdot u_tdx\right)_t+\int(\rho+m)_{tt}u\cdot\nabla u\cdot u_tdx+\int(\rho+m)u_t\cdot\nabla u\cdot u_tdx\notag\\
		&\quad+\int(\rho+m)u\cdot\nabla u_t\cdot u_tdx\notag\\
		&\le-\left(\int(\rho+m)_tu\cdot\nabla u\cdot u_tdx\right)_t+C\|(\rho+m)_{tt}\|_{L^2}\|u_t\|_{L^6}\|u\|_{L^6}\|\nabla u\|_{L^6}\notag\\
		&\quad+C\|(\rho+m)^{\frac{1}{2}}u_t\|_{L^2}\|u_t\|_{L^6}\|\nabla u\|_{L^3}+C\|\nabla u_t\|_{L^2}\|u_t\|_{L^6}\|(\rho+m)^{\frac{1}{2}}u\|_{L^3}\notag\\
		&\le-\left(\int(\rho+m)_tu\cdot\nabla u\cdot u_tdx\right)_t+C(\|(\rho+m)_{tt}\|^2_{L^2}+\|\nabla u_t\|^2_{L^2}+1),
	\end{align}
\begin{equation}\label{4.27}
	\begin{aligned}
		I_3+I_4+I_5&=-\int(\rho+m)u_t\cdot\nabla u\cdot u_{tt}dx-\int(\rho+m)u\cdot\nabla u_t\cdot u_{tt}dx-\int\nabla P_t\cdot u_{tt}dx\\
		&\le C\|(\rho+m)^{\frac{1}{2}}u_{tt}\|_{L^2}\|u_t\|_{L^6}\|\nabla u\|_{L^3}+C\|(\rho+m)^{\frac{1}{2}}u_{tt}\|_{L^2}\|\nabla u_t\|_{L^2}\|u\|_{L^\infty}\\
		&\quad+\left(\int P_t{\rm div}u_tdx\right)_t-\int P_{tt}{\rm div}u_tdx\\
		&\le\delta\|(\rho+m)^{\frac{1}{2}}u_{tt}\|^2_{L^2}+\left(\int P_t{\rm div}u_tdx\right)_t+C(\|P_{tt}\|^2_{L^2}+\|\nabla u_t\|^2_{L^2}).
	\end{aligned}
\end{equation}
Choosing a suitably small positive constant $\delta$  and using \eqref{4.24}--\eqref{4.27}, we have
\begin{equation}\label{4.28}
	\begin{aligned}
	&\left(\frac{2\mu+\lambda}{2}\sigma\|{\rm div}u_t\|_{L^2}^2+\frac{\mu}{2}\sigma\|{\rm curl}u_t\|^2_{L^2}\right)_t+\sigma\int(\rho+m)|u_{tt}|^2dx\\
	&\le-\left(\frac{1}{2}\sigma\int(\rho+m)_t|u_t|^2dx+\sigma\int(\rho+m)_tu\cdot\nabla u\cdot u_tdx-\sigma\int P_t{\rm div}u_tdx\right)_t+C\sigma\|P_{tt}\|^2_{L^2}\\
	&\quad +C\sigma\|\nabla u_t\|^2_{L^2}(\|\nabla u_t\|^2_{L^2}+1)+C\sigma\|(\rho+m)_{tt}\|^2_{L^2}+C\|\nabla u_t\|^2_{L^2}+C.	
	\end{aligned}
\end{equation}
Integrating \eqref{4.28} over (0,T], using $\eqref{1.1}_1$, $\eqref{1.1}_2$, \eqref{4.10}, \eqref{4.15} and Lemma \ref{Lm2.6} gives
\begin{equation}\label{4.29}
	\begin{aligned}
		&\sigma\|\nabla u_t\|^2_{L^2}+\int_{0}^{T}\sigma\|(\rho+m)^{\frac{1}{2}}u_{tt}\|^2_{L^2}dt\\
		&\le-\frac{1}{2}\sigma\int(\rho+m)_t|u_t|^2dx-\sigma\int(\rho+m)_tu\cdot\nabla u\cdot u_tdx+\sigma\int P_t{\rm div}u_tdx\\
		&\quad+C\int_{0}^{T}\sigma\|\nabla u_t\|^2_{L^2}(\|\nabla u_t\|^2_{L^2}+1)dt+C\\
		&\le\frac{1}{2}\sigma\int{\rm div}\left((\rho+m)u\right)|u_t|^2dx+\delta\sigma\|\nabla u_t\|^2_{L^2}+C\int_{0}^{T}\sigma\|\nabla u_t\|^2_{L^2}(\|\nabla u_t\|^2_{L^2}+1)dt+C\\
		&\le C\sigma\|(\rho+m)^{\frac{1}{2}}u_t\|_{L^2}\|\nabla u_t\|_{L^2}+\delta\sigma\|\nabla u_t\|^2_{L^2}+C\int_{0}^{T}\sigma\|\nabla u_t\|^2_{L^2}(\|\nabla u_t\|^2_{L^2}+1)dt+C\\
		&\le\delta\sigma\|\nabla u_t\|^2_{L^2}+C\int_{0}^{T}\sigma\|\nabla u_t\|^2_{L^2}(\|\nabla u_t\|^2_{L^2}+1)dt+C.
	\end{aligned}
\end{equation}
By using \eqref{4.29}, \eqref{4.10} and Gronwall's inequality, we can obtain \eqref{4.16}.
\end{proof}
\begin{lemma}\label{Lm4.4}
For any $q\in(3,6)$, there exists a positive constant C such that
\begin{equation}\label{4.30}
	\mathop{\rm{sup}}_{0\le t\le T}\left(\|\rho-{\rho}_\infty\|_{W^{2,q}}+\|m-{m}_\infty\|_{W^{2,q}}+\|P-{P}_\infty\|_{W^{2,q}}\right)\le C,
\end{equation}
\begin{equation}\label{4.31}
	\mathop{\rm{sup}}_{0\le t\le T}\sigma\|\nabla u\|^2_{H^2}+\int_{0}^{T}\left(\|\nabla u\|^2_{H^2}+\|\nabla^2u\|_{W^{1,q}}^{p_0}+\sigma\|\nabla u_t\|^2_{H^1}\right)dt\le C,
\end{equation}
where $p_0=(1,\frac{9q-6}{10q-12})\in(1,\frac{7}{6})$.
\end{lemma}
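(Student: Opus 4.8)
The plan is to bootstrap the $H^{1}$/$H^{2}$-in-space bounds of Lemmas \ref{Lm4.1}--\ref{Lm4.3} up to the $H^{2}$/$W^{2,q}$ level, by combining elliptic regularity for the Lam\'e-type systems satisfied by $u$ and by $u_{t}$ --- via the splitting $\eqref{1.20}$ of $\eqref{1.1}_{3}$ into $\nabla F$ and $\nabla\times{\rm curl}\,u$, Lemma \ref{Lm2.8}, and \eqref{2.11}--\eqref{2.20}, \eqref{4.13} --- with the transport structure of $\eqref{1.1}_{1}+\eqref{1.1}_{2}$ applied to the $L^{q}$ norm of $\nabla^{2}(\rho+m)$. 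All the lower-order quantities $\|\nabla u\|_{H^{1}}$, $\|(\rho-\rho_\infty,m-m_\infty,P-P_\infty)\|_{H^{2}}$, $\int_{0}^{T}\|\nabla\dot u\|_{L^{2}}^{2}dt$, $\int_{0}^{T}\|\nabla u_{t}\|_{L^{2}}^{2}dt$, $\sup_{t}\sigma\|\nabla u_{t}\|_{L^{2}}^{2}$, $\int_{0}^{T}\sigma\|(\rho+m)^{1/2}u_{tt}\|_{L^{2}}^{2}dt$ and $\int_{0}^{T}\|\nabla u\|_{L^{\infty}}dt$ are already at hand from \eqref{4.2}, \eqref{4.10}, \eqref{4.11}, \eqref{4.15}, \eqref{4.16}.

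\emph{Velocity estimates.} Differentiating $\eqref{1.1}_{3}$ in $t$ and reading the result as a stationary Lam\'e system for $u_{t}$ (which satisfies $u_{t}\cdot n=0$ on $\partial\Omega$) with source $-(\rho+m)u_{tt}-\nabla P_{t}$ plus lower-order terms in $\rho_{t},u_{t},\nabla u$, Lemma \ref{Lm2.8} together with the analogues of \eqref{2.11}, \eqref{2.12} for $u_{t}$ gives $\|\nabla^{2}u_{t}\|_{L^{2}}\le C(\|(\rho+m)^{1/2}u_{tt}\|_{L^{2}}+\|\nabla P_{t}\|_{L^{2}}+\|\nabla u_{t}\|_{L^{2}}+1)$; multiplying by $\sigma$, squaring and integrating over $[0,T]$ closes $\int_{0}^{T}\sigma\|\nabla u_{t}\|_{H^{1}}^{2}dt\le C$ by \eqref{4.10}, \eqref{4.15}, \eqref{4.16}. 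Next, \eqref{4.13} with $p=2$, together with $\|\nabla^{2}P\|_{L^{2}}\le C$, $\|\nabla(\rho+m)\|_{L^{6}}\le C$ and $\|(\rho+m)\dot u\|_{H^{1}}\le C(1+\|\nabla\dot u\|_{L^{2}})\le C(1+\|\nabla u_{t}\|_{L^{2}})$ (using $\dot u=u_{t}+u\cdot\nabla u$ and $\|\nabla(u\cdot\nabla u)\|_{L^{2}}\le C$), yields $\|\nabla^{3}u\|_{L^{2}}\le C(1+\|\nabla u_{t}\|_{L^{2}})$, so the time integral of $\|\nabla u\|_{H^{2}}^{2}$ follows from \eqref{4.10} and $\sup_{t}\sigma\|\nabla u\|_{H^{2}}^{2}\le C$ from \eqref{4.16} and \eqref{4.2}.

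\emph{Density estimate and the $L^{p_{0}}_{t}$ bound.} Differentiating $\eqref{1.1}_{1}+\eqref{1.1}_{2}$ twice in space, testing against $|\nabla^{2}(\rho+m)|^{q-2}\nabla^{2}(\rho+m)$ and integrating by parts in the convective term (using $u\cdot n=0$) gives $\frac{d}{dt}\|\nabla^{2}(\rho+m)\|_{L^{q}}\le C(1+\|\nabla u\|_{L^{\infty}})\|\nabla^{2}(\rho+m)\|_{L^{q}}+C\|\nabla^{2}u\|_{W^{1,q}}$. By \eqref{4.13} with $p=q$, Lemma \ref{Lm2.8} and $\|\nabla(\rho+m)\|_{L^{6}}\le C$, one has $\|\nabla^{2}u\|_{W^{1,q}}\le C(1+\|\nabla\dot u\|_{L^{q}}+\|\nabla^{2}(\rho+m)\|_{L^{q}})$ --- the term $\|P-P_\infty\|_{W^{2,q}}$ being absorbed once $\|\nabla(\rho+m)\|_{L^{2q}}$ is controlled by interpolation between $\|\nabla(\rho+m)\|_{L^{6}}$ and $\|\nabla^{2}(\rho+m)\|_{L^{q}}$ with a power $<1$. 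Gr\"onwall's inequality, with $\int_{0}^{T}\|\nabla u\|_{L^{\infty}}dt\le C$ from \eqref{4.2}, then reduces everything to bounding $\int_{0}^{T}\|\nabla\dot u\|_{L^{q}}^{p_{0}}dt$; writing $\|\nabla\dot u\|_{L^{q}}\le\|\nabla u_{t}\|_{L^{q}}+\|\nabla u\|_{L^{2q}}^{2}+\|u\|_{L^{\infty}}\|\nabla^{2}u\|_{L^{q}}$, using \eqref{4.5} to get $\|\nabla^{2}u\|_{L^{q}}\le C(1+\|\nabla u_{t}\|_{L^{2}}^{(3q-6)/(2q)})$, interpolating $\|\nabla u_{t}\|_{L^{q}}$ between $\|\nabla u_{t}\|_{L^{2}}$ and $\|\nabla u_{t}\|_{L^{6}}\le C(1+\|\nabla^{2}u_{t}\|_{L^{2}})$, and then applying H\"older in $t$ against the $L^{2}_{t}$ bounds for $\|\nabla u_{t}\|_{L^{2}}$ and for $t^{1/2}\|\nabla^{2}u_{t}\|_{L^{2}}$ while absorbing the singular weight $t^{-(3q-6)p_{0}/(4q)}$ near $t=0$ --- the exponent $p_{0}=\frac{9q-6}{10q-12}$ is precisely the largest power for which all these time-integrals stay finite (and matches the $L^{p_{0}}(0,T;W^{2,q})$ regularity of the local solution in Lemma \ref{Lm2.1}). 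This gives $\int_{0}^{T}\|\nabla^{2}u\|_{W^{1,q}}^{p_{0}}dt\le C$, and feeding the $W^{2,q}$ bound of $\rho+m$ (hence of $P=\rho^{\gamma}+m^{\alpha}$) back into \eqref{4.13} yields \eqref{4.30}, \eqref{4.31}.

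\emph{Main obstacle.} The delicate point is the circularity in the last step: controlling $\|\nabla^{2}(\rho+m)\|_{L^{q}}$ forces control of $\|\nabla^{3}u\|_{L^{q}}$, which through the elliptic estimates needs $\|\nabla\dot u\|_{L^{q}}$ and hence $\|\nabla^{2}\dot u\|_{L^{2}}\sim\|\nabla^{2}u_{t}\|_{L^{2}}$, a quantity available only with the time weight $\sigma$ and only in $L^{2}_{t}$. As a result, only a sub-critical time power $p_{0}<\frac{7}{6}$ of $\|\nabla^{3}u\|_{L^{q}}$ can be closed; pinning down this exponent and handling the behaviour near $t=0$ (where one also leans on the local strong solution of Lemma \ref{Lm2.1}) is the hard part, while the remaining steps are routine elliptic regularity plus Gr\"onwall.
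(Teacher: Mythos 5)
Your proposal is correct and follows essentially the same route as the paper's proof: elliptic estimates for the effective-viscous-flux/Lam\'e structure (\eqref{4.13}, Lemma \ref{Lm2.8}) applied to $u$ and to $u_t$ to obtain $\sup_t\sigma\|\nabla u\|_{H^2}^2$, $\int_0^T\|\nabla u\|_{H^2}^2dt$ and $\int_0^T\sigma\|\nabla u_t\|_{H^1}^2dt$, followed by the transport/Gr\"onwall argument for $\|\nabla^2(\rho+m)\|_{L^q}$ whose source is put in $L^{p_0}_t$ via the $L^2$--$L^6$ interpolation of $\nabla u_t$ with the $\sigma$-weight and H\"older in time, exactly as in the paper's \eqref{4.32}--\eqref{4.40}. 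The only inaccuracy is the side remark that $p_0=\frac{9q-6}{10q-12}$ is ``precisely the largest'' admissible power (your own H\"older computation closes up to $\frac{8q}{10q-12}>p_0$, the stated $p_0$ being inherited from the local theory of Lemma \ref{Lm2.1}), which does not affect the validity of the argument.
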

\begin{proof}
	By \eqref{4.13}, \eqref{4.2} and \eqref{4.11}, it gives
	\begin{equation}\label{4.32}
		\begin{aligned}
			\|\nabla^2u\|_{H^1}\le&\|(\rho+m)\dot{u}\|_{H^1}+\|P-P_\infty\|_{H^2}+C\\
			\le&\|\nabla((\rho+m)\dot{u})\|_{L^2}+C\\
		\le&C\|\nabla u_t\|_{L^2}+C,		\end{aligned}
	\end{equation}
where we have used the fact that
\begin{equation*}
	\begin{aligned}
		\|\nabla((\rho+m)\dot{u})\|_{L^2}\le\|\nabla(\rho+m)\dot{u}\|_{L^2}+\|(\rho+m)\nabla\dot{u}\|_{L^2}
		\le C\|\nabla u_t\|_{L^2}+C.
	\end{aligned}
\end{equation*}
Then, we deduce from \eqref{4.2}, \eqref{4.10}, \eqref{4.16} and \eqref{4.32} that
\begin{equation}\label{4.33}
	\mathop{\rm{sup}}_{0\le t\le T}\sigma\|\nabla u\|^2_{H^2}+\int_{0}^{T}\|\nabla u\|^2_{H^2}dt\le C.
\end{equation}
Utilizing \eqref{4.2} and \eqref{4.15} implies that
\begin{equation}\label{4.34}
	\begin{aligned}
		\|\nabla u_t\|_{H^1}&\le C\left(\|\left((\rho+m)\dot{u}\right)_t\|_{L^2}+\|\nabla P_t\|_{L^2}+\|\nabla u_t\|_{L^2}\right)\\
		&\le C\left(\|(\rho+m)_t\dot{u}\|_{L^2}+\|(\rho+m)u_{tt}\|_{L^2}+\|(\rho+m)\left(u\cdot\nabla u\right)_t\|_{L^2}+\|\nabla u_t\|_{L^2}\right)+C\\
		&\le C\|(\rho+m)_t\|_{L^3}\|\nabla u_t\|_{L^2}+C\|(\rho+m)^{\frac{1}{2}}u_{tt}\|_{L^2}+C\|\nabla u_t\|_{L^2}+C\\
		&\le C\|\nabla u_t\|_{L^2}+C\|(\rho+m)^{\frac{1}{2}}u_{tt}\|_{L^2}+C,
	\end{aligned}
\end{equation}
where in the first inequality, we have used the a priori estimate similar to \eqref{4.5} since
\begin{equation*}
	\left\{
	\begin{array}{lr}
		\mu\Delta u_t+(\lambda+\mu)\nabla{\rm div}u_t=\left((\rho+m)\dot{u}\right)_t+\nabla P_t, & \ \ x\in\Omega, \\
		u_t\cdot n=0,\ \ {\rm curl}u_t
		\times n=0, & \ \ x\in\partial\Omega.
		\end{array}
	\right.
\end{equation*}
Combining \eqref{4.34} with \eqref{4.16} implies
\begin{equation}\label{4.35}
	\int_{0}^{T}\sigma\|\nabla u_t\|^2_{H^1}dt\le C.
\end{equation}
It follows from \eqref{4.13}, \eqref{4.1} and \eqref{4.11} that
\begin{equation}\label{4.36}
	\begin{aligned}
		\|\nabla^2u\|_{W^{1,q}}&\le C(\|(\rho+m)\dot{u}\|_{W^{1,q}}+\|\nabla P\|_{W^{1,q}}+\|\nabla u\|_{L^2}+\|P-P_\infty\|_{L^2}+\|P-P_{\infty}\|_{L^q})\\
		&\le C(\|(\rho+m)\dot{u}\|_{L^q}+\|\nabla((\rho+m)\dot{u})\|_{L^q}+\|\nabla P\|_{L^q}+\|\nabla^2 P\|_{L^q}+1)\\
		&\le C\|\nabla((\rho+m)\dot{u})\|_{L^q}+C\|\nabla^2P\|_{L^q}+C\|\nabla u_t\|_{L^2}+C,
	\end{aligned}
\end{equation}
which together with $\eqref{1.1}_1$ and $\eqref{1.1}_2$ gives
\begin{equation}\label{4.37}
	\begin{aligned}
		\left(\|\nabla^2(\rho+m)\|_{L^q}\right)_t&\le C(\|\nabla u\|_{L^\infty}+1)\|\nabla^2(\rho+m)\|_{L^q}+C\|\nabla^2u\|_{W^{1,q}}\\
		&\le C\left[(\|\nabla u\|_{L^\infty}+1)\|\nabla^2(\rho+m)\|_{L^q}+\|\nabla\left((\rho+m)\dot{u}\right)\|_{L^q}+\|\nabla u_t\|_{L^2}+1\right],
	\end{aligned}
\end{equation}
and
\begin{equation}\label{4.38}
	\begin{aligned}
		\|\nabla\left((\rho+m)\dot{u}\right)\|_{L^q}&\le C\|\nabla(\rho+m)\|_{L^{q}}\|u_t\|_{L^{\infty}}+C\|\nabla(\rho+m)\|_{L^{q}}\|u\|_{L^\infty}\|\nabla u\|_{L^{\infty}}\\
		&\quad+C\|\nabla u_t\|_{L^{q}}+C\|\nabla^2 u\|_{L^q}+C\|\nabla u\|_{H^2}^2\\
		&\le C\|\nabla u_t\|_{L^2}+C\|\nabla u_t\|_{L^2}^{\frac{6-q}{2q}}\|\nabla u_t\|_{L^{6}}^{\frac{3q-6}{2q}}+C\|\nabla u\|_{H^2}^2+C\\
		&\le C\left[\|\nabla u_t\|_{L^2}+\left(\sigma\|\nabla u_t\|_{L^2}^2\right)^{\frac{6-q}{2q}}\left(\sigma\|\nabla u_t\|^2_{H^1}\right)^{\frac{3q-6}{4q}}{\sigma}^{-\frac{1}{2}}+\|\nabla u\|_{H^2}^2+1\right]\\
		&\le C\|\nabla u_t\|_{L^2}+C\left(\sigma\|\nabla u_t\|^2_{H^1}\right)^{\frac{3q-6}{2q}}{\sigma}^{-\frac{1}{2}}+C\|\nabla u\|_{H^2}^2+C.
	\end{aligned}
\end{equation}
Hence, integrating inequality \eqref{4.38} over [0,T], by \eqref{4.1} and \eqref{4.35}, we obtain
\begin{equation}\label{4.39}
	\int_{0}^{T}\|\nabla\left((\rho+m)\dot{u}\right)\|_{L^q}^{p_0}dt\le C.
\end{equation}
Applying Gronwall's inequality to \eqref{4.37}, we deduce from \eqref{4.2} and \eqref{4.39} that
\begin{equation}\label{4.40}
	\mathop{\rm{sup}}_{0\le t\le T}\|\nabla^2(\rho+m)\|_{L^q}\le C,
\end{equation}
and then
\begin{equation*}
	\mathop{\rm{sup}}_{0\le t\le T}\left(\|P-{P}_\infty\|_{W^{2,q}}+\|\rho-{\rho}_\infty\|_{W^{2,q}}+\|m-{m}_\infty\|_{W^{2,q}}\right)\le C.
\end{equation*}
It follows from \eqref{4.36}, \eqref{4.39}, \eqref{4.40} and \eqref{4.10} that
\begin{equation*}
	\int_{0}^{T}\|\nabla^2u\|_{W^{1,q}}^{p_0}dt\le C.
\end{equation*}
So we finish the proof of Lemma \ref{Lm4.4}.
\end{proof}
\begin{lemma}\label{Lm4.5}
There exists a positive constant C such that
\begin{equation}\label{4.41}
	\mathop{\rm{sup}}_{0\le t\le T}\sigma^2\left(\|\nabla u_t\|^2_{H^1}+\|\nabla u\|^2_{W^{2,q}}\right)+\int_{0}^{T}\sigma^2\|\nabla u_{tt}\|^2_{L^2}dt\le C,
\end{equation}
for any $q\in(3,6)$.
\end{lemma}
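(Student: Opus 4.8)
The plan is to gain one more degree of time regularity by differentiating the momentum equation $\eqref{1.1}_3$ twice in $t$ and testing against $\sigma^2u_{tt}$, and then to transfer this into the spatial regularity of $u_t$ and $u$ via elliptic estimates for the Lam\'e operator. Concretely, I would start from the Lam\'e system for $u_t$ used in the proof of Lemma \ref{Lm4.4}, namely $\mu\Delta u_t+(\lambda+\mu)\nabla{\rm div}u_t=((\rho+m)\dot u)_t+\nabla P_t$ with $u_t\cdot n={\rm curl}u_t\times n=0$ on $\partial\Omega$, differentiate it once more in $t$, rewrite the principal part via $-\Delta=-\nabla{\rm div}+\nabla\times{\rm curl}$, multiply by $\sigma^2u_{tt}$ and integrate over $\Omega$. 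Since $u\cdot n=0$ on $\partial\Omega$ holds for all $t$, one has $u_{tt}\cdot n=0$ and ${\rm curl}u_{tt}\times n=0$ on $\partial\Omega$, so every boundary integral produced by the integrations by parts vanishes; this makes the energy identity here much cleaner than those of Lemmas \ref{Lm3.3}--\ref{Lm3.7}. It takes the form
\[
\tfrac12\frac{d}{dt}\Bigl(\sigma^2\!\!\int(\rho+m)|u_{tt}|^2dx\Bigr)+\sigma^2\bigl((\lambda+2\mu)\|{\rm div}u_{tt}\|_{L^2}^2+\mu\|{\rm curl}u_{tt}\|_{L^2}^2\bigr)=\sigma\sigma'\!\!\int(\rho+m)|u_{tt}|^2dx+\sum_k\mathcal R_k,
\]
where $\sum_k\mathcal R_k$ collects $\int\sigma^2 P_{tt}\,{\rm div}u_{tt}\,dx$ and the terms obtained by differentiating in $t$ the coefficients $(\rho+m)$, $(\rho+m)u$ and $(\rho+m)\nabla u$, the most delicate being those involving $(\rho+m)_{tt}$, $u_t\cdot\nabla u_t$ and $(\rho+m)u\cdot\nabla u_{tt}\cdot u_{tt}$.

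Each $\mathcal R_k$ is estimated by Gagliardo--Nirenberg, \eqref{2.11}--\eqref{2.15} and Lemmas \ref{Lm4.1}--\ref{Lm4.4}. The key point — forced by the vacuum, since $\|u_{tt}\|_{L^2}$ is \emph{not} controlled by $\|(\rho+m)^{1/2}u_{tt}\|_{L^2}$ — is to keep every $u_{tt}$-factor either weighted as $(\rho+m)^{1/2}u_{tt}$ or measured in $L^6$ through $\|u_{tt}\|_{L^6}\le C\|\nabla u_{tt}\|_{L^2}$; when a factor $(\rho+m)_t$ with no spare $(\rho+m)$ appears, one first integrates by parts using ${\rm div}((\rho+m)u)=-(\rho+m)_t$ and $u\cdot n|_{\partial\Omega}=0$ to create such a factor. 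For example $\int\sigma^2(\rho+m)u\cdot\nabla u_{tt}\cdot u_{tt}\,dx\le C\sigma^2\|(\rho+m)^{1/2}u\|_{L^\infty}\|(\rho+m)^{1/2}u_{tt}\|_{L^2}\|\nabla u_{tt}\|_{L^2}$, which is absorbed into $\delta\sigma^2\|\nabla u_{tt}\|_{L^2}^2$ plus a Gronwall term in $\sigma^2\|(\rho+m)^{1/2}u_{tt}\|_{L^2}^2$. The weight $\sigma^2$ is exactly what absorbs the negative powers of $\sigma$ that arise when $\|\nabla^2u_t\|_{L^2}$, $\|\nabla^3u\|_{L^2}$, $\|(\rho+m)_{tt}\|_{L^2}$ and $\|P_{tt}\|_{L^2}$ enter near $t=0$. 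After taking $\delta$ small, using Lemma \ref{Lm2.6} (estimate \eqref{2.7} with $q=2$, valid since $u_{tt}\cdot n|_{\partial\Omega}=0$) to replace $\|{\rm div}u_{tt}\|_{L^2}^2+\|{\rm curl}u_{tt}\|_{L^2}^2$ by $\|\nabla u_{tt}\|_{L^2}^2$, integrating over $(0,T]$ with $\sigma^2(0)=0$, and using $\int_0^T\|\nabla u_t\|_{L^2}^2dt\le C$, $\int_0^T\sigma\|\nabla u_t\|_{H^1}^2dt\le C$, $\int_0^T(\|P_{tt}\|_{L^2}^2+\|(\rho+m)_{tt}\|_{L^2}^2)dt\le C$ together with Gronwall's inequality, one obtains $\mathop{\rm sup}_{0\le t\le T}\sigma^2\|(\rho+m)^{1/2}u_{tt}\|_{L^2}^2+\int_0^T\sigma^2\|\nabla u_{tt}\|_{L^2}^2dt\le C$.

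It then remains to bootstrap. Applying elliptic regularity (as for \eqref{4.5}) to the Lam\'e system for $u_t$ gives $\|\nabla u_t\|_{H^1}\le C(\|((\rho+m)\dot u)_t\|_{L^2}+\|\nabla P_t\|_{L^2}+\|\nabla u_t\|_{L^2})$, and expanding $((\rho+m)\dot u)_t$ while using $\|\rho+m\|_{L^\infty}\le C$, \eqref{4.2} and \eqref{4.15} shows $\|((\rho+m)\dot u)_t\|_{L^2}\le C\|(\rho+m)^{1/2}u_{tt}\|_{L^2}+C(\|\nabla u_t\|_{L^2}+1)$; multiplying by $\sigma^2$ and invoking the bound just proved, \eqref{4.15} and \eqref{4.16}, we get $\mathop{\rm sup}_{0\le t\le T}\sigma^2\|\nabla u_t\|_{H^1}^2\le C$. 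For $\nabla u$ in $W^{2,q}$ I would use the elliptic estimate \eqref{4.13}, i.e.\ \eqref{4.36}, written as $\sigma\|\nabla^2u\|_{W^{1,q}}\le C\sigma\|\nabla((\rho+m)\dot u)\|_{L^q}+C\sigma(\|\nabla^2P\|_{L^q}+\|\nabla u_t\|_{L^2}+1)$; the last group is bounded by Lemma \ref{Lm4.4} and $\sigma^{1/2}\|\nabla u_t\|_{L^2}\le C$, while in $\|\nabla((\rho+m)\dot u)\|_{L^q}$ one interpolates $\|\nabla u_t\|_{L^q}$ between $\|\nabla u_t\|_{L^2}\le C\sigma^{-1/2}$ and $\|\nabla u_t\|_{H^1}\le C\sigma^{-1}$ (both now available), obtaining $\sigma\|\nabla u_t\|_{L^q}\le C\sigma^{(6-q)/(4q)}\le C$ and hence $\sigma\|\nabla^3u\|_{L^q}\le C$. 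Together with the bounds on $\|\nabla u\|_{L^q}$ and $\sigma\|\nabla^2u\|_{L^q}$ from Lemma \ref{Lm4.4}, this yields $\mathop{\rm sup}_{0\le t\le T}\sigma^2\|\nabla u\|_{W^{2,q}}^2\le C$.

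The hard part will be closing the energy estimate in the second step: once the elliptic bounds are substituted for $\nabla^2u_t$ and $\nabla^3u$ inside the $\mathcal R_k$, the right-hand side acquires terms quadratic in $\sigma\|\nabla u_t\|_{H^1}$ and in $\sigma^2\|(\rho+m)^{1/2}u_{tt}\|_{L^2}^2$, and one has to verify that each of them either carries a strictly positive residual power of $\sigma$ (so it stays bounded as $t\to0^+$) or is of the Gronwall form $(L^1_t\text{-function})\cdot\sigma^2\|(\rho+m)^{1/2}u_{tt}\|_{L^2}^2$, and that the top-order pieces — notably those with $(\rho+m)_{tt}$ and with $u_t\cdot\nabla u_t$ — are genuinely absorbable into $\delta\sigma^2\|\nabla u_{tt}\|_{L^2}^2$ after the $(\rho+m)^{1/2}$-splitting and the auxiliary integrations by parts described above. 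The $L^q$-interpolation exponents for $\nabla u_t$, which are exactly balanced only for $q\in(3,6)$, are the delicate arithmetic point.
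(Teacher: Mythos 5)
Your proposal is correct and follows essentially the same route as the paper: differentiate $\eqref{1.1}_3$ twice in time, test with $u_{tt}$ (the $\sigma^2$ weight can equivalently be inserted afterwards, as in \eqref{4.47}--\eqref{4.48}), use $u_{tt}\cdot n={\rm curl}\,u_{tt}\times n=0$ and the div--curl bound \eqref{2.7} (the paper's \eqref{4.46}) plus Gronwall to get $\sup_t\sigma^2\|(\rho+m)^{1/2}u_{tt}\|_{L^2}^2+\int_0^T\sigma^2\|\nabla u_{tt}\|_{L^2}^2dt\le C$, and then bootstrap through the elliptic estimates \eqref{4.34} and \eqref{4.36}--\eqref{4.38} exactly as in \eqref{4.49}--\eqref{4.50}. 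The only inessential difference is your closing worry about $\nabla^2u_t$ and $\nabla^3u$ entering the commutator terms: the twice-differentiated equation \eqref{4.42} contains only first-order spatial derivatives of $u$, $u_t$, $u_{tt}$ together with $(\rho+m)_{tt}$ and $P_{tt}$, so the energy step closes directly from Lemmas \ref{Lm4.1}--\ref{Lm4.4} without substituting the elliptic bounds back in.
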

\begin{proof}
	Differentiating $\eqref{1.1}_3$ with respect to $t$ twice gives
\begin{equation}\label{4.42}
	\begin{aligned}
		&(\rho+m)_{tt}u_t+2(\rho+m)_tu_{tt}+(\rho+m)u_{ttt}+\left[(\rho+m)_tu\cdot\nabla u+(\rho+m)(u_t\cdot\nabla u+u\cdot\nabla u_t)\right]_t\\
		&-(2\mu+\lambda)\nabla{\rm div}u_{tt}+\mu\nabla\times{\rm curl}u_{tt}+\nabla P_{tt}=0.
	\end{aligned}
\end{equation}
Then, multiplying \eqref{4.42} by $u_{tt}$ and integrating over $\Omega$, we conclude that
\begin{equation}\label{4.43}
	\begin{aligned}
		&\frac{1}{2}\left(\int(\rho+m)u_{tt}^2dx\right)_t+(2\mu+\lambda)\int({\rm div}u_{tt})^2dx+\mu\int({\rm curl}u_{tt})^2dx\\
		&=-\int(\rho+m)_{tt}u_t\cdot u_{tt}dx-\frac{3}{2}\int(\rho+m)_tu_{tt}^2dx-\int\nabla P_{tt}\cdot u_{tt}dx\\
		&\quad-\int\left((\rho+m)_tu\cdot\nabla u+(\rho+m)u_t\cdot\nabla u+(\rho+m)u\cdot\nabla u_t\right)_t\cdot u_{tt}dx\\
		&=:\sum_{i=1}^{4}J_i.
	\end{aligned}
\end{equation}
Now, we estimate all terms on the right-hand side of \eqref{4.43}. First, by \eqref{4.2}, \eqref{4.10} and \eqref{4.15}, it holds
\begin{equation}\label{4.44}
	\begin{aligned}
		&J_1+J_2+J_3\\
		&=-\int(\rho+m)_{tt}u_t\cdot u_{tt}dx-\frac{3}{2}\int(\rho+m)_t|u_{tt}|^2dx-\int\nabla P_{tt}\cdot u_{tt}dx\\
		&=\int{\rm div}((\rho+m)u)_tu_t\cdot u_{tt}dx+\frac{3}{2}\int{\rm div}((\rho+m)u)|u_{tt}|^2dx+\int P_{tt}{\rm div}u_{tt}dx\\
		&\le-\int((\rho+m)u)_t\cdot\nabla u_t\cdot u_{tt}dx-\int((\rho+m)u)_t\cdot\nabla u_{tt}\cdot u_{t}dx-\frac{3}{2}\int((\rho+m)u)\cdot\nabla u_{tt}\cdot u_{tt}dx\\
		&\quad+C\|P_{tt}\|^2_{L^2}+\delta\|\nabla u_{tt}\|^2_{L^2}\\
		&\le C\|(\rho+m)_t\|_{L^6}\|u\|_{L^6}(\|\nabla u_t\|_{L^2}\|u_{tt}\|_{L^6}+\|\nabla u_{tt}\|_{L^2}\|u_t\|_{L^6})+\|\nabla u_{tt}\|_{L^2}\|(\rho+m)u_{tt}\|_{L^2}\|u\|_{L^\infty}\\
		&\quad+C(\|\nabla u_t\|_{L^2}\|u_{tt}\|_{L^6}+\|\nabla u_{tt}\|_{L^2}\|u_t\|_{L^6})\|(\rho+m)u_t\|_{L^3}+C\|P_{tt}\|^2_{L^2}+\delta\|\nabla u_{tt}\|^2_{L^2}\\
		&\le C(\|\nabla u_t\|^2_{L^2}+\|(\rho+m)^{\frac{1}{2}}u_{tt}\|_{L^2}^2+\|\nabla u_t\|_{L^2}^4+\|P_{tt}\|_{L^2}^2)+\delta\|\nabla u_{tt}\|^2_{L^2},
	\end{aligned}
\end{equation}
and
\begin{align}\label{4.45}
		J_4&=-\int\left((\rho+m)_tu\cdot\nabla u+(\rho+m)u_t\cdot\nabla u+(\rho+m)u\cdot\nabla u_t\right)_t\cdot u_{tt}dx\notag\\
		&=-\int(\rho+m)_{tt}u\cdot\nabla u\cdot u_{tt}dx-2\int(\rho+m)_tu_t\cdot\nabla u\cdot u_{tt}dx-2\int(\rho+m)_tu\cdot\nabla u_t\cdot u_{tt}dx\notag\\
		&\quad-\int(\rho+m)u_{tt}\cdot\nabla u\cdot u_{tt}dx-2\int(\rho+m)u_t\cdot\nabla u_t\cdot u_{tt}dx-\int(\rho+m)u\cdot\nabla u_{tt}\cdot u_{tt}dx\notag\\
		&\le C\|(\rho+m)_{tt}\|_{L^2}\|\nabla u\|_{L^3}\|u_{tt}\|_{L^6}\|u\|_{L^\infty}+C\|(\rho+m)_t\|_{L^3}\|u_t\|_{L^6}\|\nabla u\|_{L^3}\|u_{tt}\|_{L^6}\notag\\
		&\quad+C\|(\rho+m)_t\|_{L^3}\|\nabla u_t\|_{L^2}\|u_{tt}\|_{L^6}\|u\|_{L^\infty}+C\|(\rho+m)^{\frac{1}{2}}u_{tt}\|_{L^2}\|\nabla u\|_{L^3}\|u_{tt}\|_{L^6}\notag\\
		&\quad+C\|(\rho+m)u_t\|_{L^3}\|\nabla u_t\|_{L^2}\|u_{tt}\|_{L^6}+C\|\nabla u_{tt}\|_{L^2}\|(\rho+m)^{\frac{1}{2}}u_{tt}\|_{L^2}\|u\|_{L^\infty}\notag\\
		&\le\delta\|\nabla u_{tt}\|^2_{L^2}+C\|(\rho+m)_{tt}\|^2_{L^2}+C\|(\rho+m)^{\frac{1}{2}}u_{tt}\|^2_{L^2}+C\|\nabla u_t\|^2_{L^2}(\|\nabla u_t\|^2_{L^2}+1).
\end{align}
Due to the fact that
\begin{equation}\label{4.46}
	\|\nabla u_{tt}\|_{L^2}\le C\left(\|{\rm div}u_{tt}\|_{L^2}+\|{\rm curl}u_{tt}\|_{L^2}\right)
\end{equation}
Using \eqref{4.43}--\eqref{4.46} and  choosing enough small $\delta$, we obtain
\begin{equation}\label{4.47}
	\begin{aligned}
	&\left(\int(\rho+m)|u_{tt}|^2dx\right)_t+\|\nabla u_{tt}\|^2_{L^2}\\
	&\le C\|(\rho+m)_{tt}\|^2_{L^2}+C\|(\rho+m)^{\frac{1}{2}}u_{tt}\|^2_{L^2}+C\|P_{tt}\|^2_{L^2}+C\|\nabla u_t\|^2_{L^2}(\|\nabla u_t\|^2_{L^2}+1),
	\end{aligned}
\end{equation}
which together with \eqref{4.10}, \eqref{4.15} and \eqref{4.16} gives that
\begin{equation}\label{4.48}
	\mathop{\rm{sup}}_{0\le t\le T}\sigma^2\|(\rho+m)^{\frac{1}{2}}u_{tt}\|^2_{L^2}+\int_{0}^{T}\sigma^2\|\nabla u_{tt}\|^2_{L^2}dt\le C.
\end{equation}
Furthermore, it follows from \eqref{4.34}, \eqref{4.16} and \eqref{4.48} that
\begin{equation}\label{4.49}
	\mathop{\rm{sup}}_{0\le t\le T}\sigma^2\|\nabla u_t\|^2_{H^1}\le C.
\end{equation}
Finally, combining \eqref{4.36} with \eqref{4.38}, \eqref{4.16}, \eqref{4.30} and \eqref{4.33} that
\begin{equation}\label{4.50}
	\mathop{\rm{sup}}_{0\le t\le T}\sigma^2\|\nabla u\|^2_{W^{2,q}}\le C,
\end{equation}
which together with \eqref{4.48} and \eqref{4.49} gives \eqref{4.41} and this completes the proof of Lemma \ref{Lm4.5}.
\end{proof}
\section{Proofs of Theorems \ref{Thm1.1} and \ref{Thm1.2}}\label{S5}
With the priori proof in Section \ref{S3} and Section \ref{S4} at hand, we prove the main results of this paper in this section.

$\underline{\emph{Proof of Theorem \ref{Thm1.1}.}}$  By Lemma \ref{Lm2.1}, the problem \eqref{1.1}--\eqref{1.5} has a unique classical solution $(\rho,m,u)$ on $\Omega\times(0,T_{\ast}]$ for some $T_{\ast}>0$.  Now, we will
extend the classical solution $(\rho,m,u)$ globally in time.
\par Firstly, by \eqref{3.2} and \eqref{3.3}, it is easy to check that
\begin{equation*}
	A_1(0)+A_2(0)=0,\ \ 0\le\rho_0+m_0\le\bar{\rho}+\bar{m},\ \ A_3(0)\le M.
\end{equation*}
Then, there exists a $T_1\in(0,T_*]$ such that
\begin{equation}\label{5.1}
	0\le\rho_0+m_0\le2(\bar{\rho}+\bar{m}),\ \ A_1(T_1)+A_2(T_1)\le2C_0^{\frac{1}{2}},\ \ A_3(\sigma(T_1))\le2M.
\end{equation}
Set
\begin{equation}\label{5.2}
	T^*={\rm sup}\left\{T|\ (5.1)\text{ holds }\right\}.
\end{equation}
Clearly, $0<T_1\le T^*$. And for any $0<\tau<T\le T^*$, one deduces from Lemmas \ref{Lm4.3}--\ref{Lm4.5} that
\begin{equation}\label{5.3}
	\left\{
	\begin{array}{lr}
		\rho-{\rho}_{\infty}\in C\left([0,T];W^{2,q}\right), & \\
			m-{m}_\infty\in C\left([0,T];W^{2,q}\right), & \\
			\nabla u_t\in C\left([\tau,T];L^q\right), & \\
			\nabla u,\nabla^2u\in C\left([\tau,T];C(\bar{\Omega})\right), &
			\end{array}
\right.
\end{equation}
where one has taken advantage of the standard embedding:
\begin{equation*}
	L^{\infty}(\tau,T;H^1)\cap H^1(\tau,T;H^{-1})\hookrightarrow C([\tau,T];L^q),\text{ for any } q\in[2,6).
\end{equation*}
This in particular yields
\begin{equation}\label{5.4}
	(\rho+m)^{\frac{1}{2}}u_t,(\rho+m)^{\frac{1}{2}}\dot{u}\in C([\tau,T];L^2).
\end{equation}
Next, we claim that
\begin{equation}\label{5.5}
	T^*=\infty.
\end{equation}
Otherwise, $T^*<\infty$. By Proposition \ref{Prop3.1}, it holds that
\begin{equation}\label{5.6}
	0\le\rho+m\le\frac{7}{4}\left(\bar{\rho}+\bar{m}\right),\ \ A_1(T^*)+A_2(T^*)\le C_0^{\frac{1}{2}},\ \ A_3(\sigma(T^*))\le M.
\end{equation}
We deduce from Lemma \ref{Lm4.4}, Lemma \ref{Lm4.5} and \eqref{5.4} that $\left(\rho(x,T^*),m(x,T^*),u(x,T^*)\right)$ satisfy the initial date condition \eqref{1.7}--\eqref{1.10}, where $g(x)\triangleq(\rho+m)^{\frac{1}{2}}\dot{u}(x,T^*)$, $x\in\Omega$. Hence, Lemma \ref{Lm2.1} shows that there is a $T^{**}>T^*$, such that \eqref{5.1} holds for $T=T^{**}$, which contradicts the definition of $T^*$.

By Lemma \ref{Lm2.1}, Lemma \ref{Lm4.4}, Lemma \ref{Lm4.5} and \eqref{5.3} indicates that $(\rho,m,u)$ is the unique classical solution defined on $\Omega\times(0,T]$ for any $0<T<T^*=\infty$.

Finally, to finish the proof of Theorem \ref{Thm1.1}, it remains to prove \eqref{1.14}. It is easy to have
\begin{equation}\label{5.7}
	(P-{P}_\infty)_t+u\cdot\nabla P+\gamma\rho^{\gamma}{\rm div}u+\alpha m^{\alpha}{\rm div}u=0.
\end{equation}
Multiplying \eqref{5.7} by 4$(P-{P}_\infty)^3$,
one has
\begin{equation*}
	\left(\|P-{P}_\infty\|_{L^4}^4\right)_t\le C\|{\rm div}u\|_{L^2}^2+C\|P-{P}_\infty\|_{L^4}^4,
\end{equation*}
which together with \eqref{3.7} and \eqref{3.30} yields that
\begin{equation}\label{5.8}
	\int_{1}^{\infty}\left(\|P-{P}_\infty\|_{L^4}^4\right)_tdt\le C.
	\end{equation}
Combining \eqref{3.30} with \eqref{5.8} leads to
\begin{equation}\label{5.9}
	\mathop{{\rm lim}}_{t\to\infty}\|P-{P}_\infty\|_{L^4}^4=0.
\end{equation}
For $2< q<\infty$, by \eqref{5.9}, we get
\begin{equation}\label{5.10}
	\mathop{{\rm lim}}_{t\to\infty}\|P-{P}_\infty\|_{L^q}=0.
\end{equation}
Notice that \eqref{3.7} imply
\begin{equation}\label{5.11}
	\begin{aligned}
		\int(\rho+m)^{\frac{1}{2}}|u|^4dx&\le\left(\int(\rho+m)|u|^2dx\right)^{\frac{1}{2}}\|u\|^{3}_{L^{6}}\le C\|\nabla u\|^{3}_{L^2}.
	\end{aligned}
\end{equation}
Thus, \eqref{1.14} follows provided that
\begin{equation}\label{5.12}
		\mathop{{\rm lim}}_{t\to\infty}\|\nabla u\|_{L^2}=0.
\end{equation}

Choosing $h=0$ in \eqref{3.18} and integrating it over $(1,\infty)$, and using \eqref{2.15}, \eqref{3.5}, \eqref{3.7} and \eqref{3.30}, we get
\begin{equation}\label{5.13}
	\begin{aligned}
	&\int_{1}^{\infty}|\phi'(t)|^2dt\\
    &\le C\int_{1}^{\infty}(\|\nabla u\|_{L^2}^2+\|\nabla u\|_{L^2}^4+\|\nabla u\|_{L^3}^3)dt+C\|\nabla u\|_{L^2}^2+C\|P-P_\infty\|_{L^2}^2\\
	&\le C\int_{1}^{\infty}(\|\nabla u\|_{L^2}^2+\|\nabla u\|_{L^2}^4+\|\nabla u\|_{L^2}^3+\|\nabla u\|_{L^2}^{\frac{3}{2}}\|P-{P}_{\infty}\|_{L^4}+\|(\rho+m)\dot{u}\|_{L^2}^3)dt+C\\
	&\le C,
	\end{aligned}
\end{equation}
where $\phi(t)=\frac{\lambda+2\mu}{2}\|{\rm div}u\|_{L^2}^2+\frac{\mu}{2}\|{\rm curl}u\|_{L^2}^2$.
By \eqref{3.7}, we obtain that
\begin{equation*}
	\int_{1}^{\infty}\|\nabla u\|^2_{L^2}dt\le\int_{0}^{\infty}\|\nabla u\|_{L^2}^2dt\le C,
\end{equation*}
which together with \eqref{5.13} yields \eqref{5.12}.

$\underline{\emph{Proof of Theorem \ref{Thm1.2}.}}$
Now, we will prove Theorem \ref{Thm1.2} by contradiction. Suppose that there exist some constant $C_1>0$ and a subsequence $\left\{t_{n_j}\right\}_{j=1}^{\infty}$ with $t_{n_j}\to\infty$ as $j\to\infty$, such that $\|\nabla P(\cdot,t_{n_j})\|_{L^r}\le C_1$. Thanks to \eqref{2.2}, for $a=3r/\left(3r+4(r-3)\right)\in(0,1)$, 
it holds that
\begin{equation}\label{5.14}
	\begin{aligned}
		\|P(x,t_{n_j})-{P}_\infty\|_{C(\bar{\Omega})}&\le C\|\nabla P(x,t_{n_j})\|_{L^r}^a\|P(x,t_{n_j})-{P}_\infty\|_{L^4}^{1-a}\\
		&\le CC_1^a\|P(x,t_{n_j})-{P}_\infty\|_{L^4}^{1-a},
	\end{aligned}
\end{equation}
which together with \eqref{1.14} yields that
\begin{equation}\label{5.15}
	\|P(x,t_{n_j})-{P}_\infty\|_{C(\bar{\Omega})}\to0\ \ as\ \  t_{n_j}\to\infty.
\end{equation}

On the other hand, since $(\rho,m,u)$ is a classical solution satisfying \eqref{1.1}, there exists a unique particle path $x_0(t)$ with $x_0(t)=x_0$ such that
\begin{equation*}
	P(x_0(t),t)\equiv0\  \text{ for all }\ t>0.
\end{equation*}
Hence, we have
\begin{equation*}
	\|P(x,t_{n_j})-{P}_\infty\|_{C(\bar{\Omega})}\ge|P(x_0(t_{n_j}),t_{n_j})-{P}_\infty|\equiv P_\infty>0,
\end{equation*}
which contradicts \eqref{5.15}. So we get the desired result \eqref{1.15}. Thus, we finish the proof of Theorem \ref{Thm1.2}.
\section*{Acknowledgement}
Z. Li is supported by the NSFC (No. 12126316, No. 11931013) and Innovative Research Team of Henan Polytechnic University (No. T2022-7). H. Wang is supported by the National Natural Science Foundation of China (No. 11901066), the Natural Science Foundation of Chongqing (No. cstc2019jcyj-msxmX0167), and projects Nos. 2022CDJXY-001, 2020CDJQY-A040
supported by the Fundamental Research Funds for the Central Universities.

\bibliographystyle{plain}	
\end {document}